\DeclareMathOperator*{\argmin}{arg\,min}
\def\R{{\mathbb R}}
\def\N{{\mathbb N}}
\def\NN{{\mathcal N}}
\def\HH{{\mathcal H}}
\def\OO{{\mathcal O}}
\def\PPrec{{\mathbf P}}
\def\TT{{\mathcal T}}
\def\XX{{\mathcal X}}
\def\YY{{\mathcal Y}}
\def\diam{{\rm diam}}
\def\norm#1#2{\|#1\|_{#2}}
\def\set#1#2{\big\{#1\,:\,#2\big\}}
\def\dual#1#2{\langle#1\,,\,#2\rangle}
\def\uu{\mathbf{u}}
\def\UU{\mathbf{U}}
\def\uuu{\boldsymbol{u}}
\def\vvv{\boldsymbol{v}}
\def\vv{\mathbf{v}}
\def\VV{\mathbf{V}}
\def\xx{\boldsymbol{X}}
\def\yy{\mathbf{Y}}
\def\diag{{\rm diag}}
\def\div{{\rm div}}
\def\normal{\boldsymbol{n}}
\def\slp{\mathcal{V}} 
\def\dlp{\mathcal{K}} 
\def\hyp{\mathcal{W}} 
\def\jn{\mathcal{L}} 
\def\sym{{\widetilde{\jn}}} 
\def\symRHS{{\widetilde{F}}}
\def\bmcRHS{{\widetilde{F}}}
\def\bmc{{\widetilde{\jn}}} 
\def\PAShyp{\mathfrak{W}_{\rm AS}}
\def\PASslp{\mathfrak{V}_{\rm AS}}
\def\prechyp{{\mathfrak{W}}}
\def\precfem{{\mathfrak{A}}}
\def\PASfem{\precfem_{\rm AS}}
\def\precslp{{\mathfrak{V}}}
\def\pform{\mathcal{B}} 
\def\tmp{\mathcal{Q}} 
\def\tmpmat{\mathbf{Q}}
\def\fem{\mathcal{A}}
\def\hmax{h_{\rm max}}
\def\hmin{h_{\rm min}}
\def\evmax{\lambda_{\rm max}}
\def\evmin{\lambda_{\rm min}}
\def\cond{{\rm cond}}
\def\embedding{{\mathcal I}}
\def\embmat{{\mathbf I}}
\def\DD{{\mathbf D}}
\def\material{A}
\def\Haar{{\mathbf H}}
\def\ZZ{{\mathcal Z}}
\def\embedP{\mathcal{J}}
\def\embedPmat{\mathbf{J}}
\def\onemat{\mathbf{1}}
\def\Omegaext{\Omega^{\rm ext}}
\def\RR{\mathbf{R}}   
\def\bignull{\boldsymbol{0}} 
\def\linhull{{\rm span}} 
\def\pphi{\boldsymbol{\Phi}}
\def\GG{\mathbf{G}}
\def\AA{\mathbf{A}}
\def\FF{\mathbf{F}}
\def\MM{\mathcal{M}}
\def\Mmat{\mathbf{M}}
\def\stabmat{\mathbf{S}}
\def\stabmatBmc{{\widetilde{\stabmat}}}
\def\Kmat{\mathbf{K}}
\def\ww{\boldsymbol{w}}
\def\WW{\mathbf{W}}
\def\ee{\mathbf{E}}
\renewcommand{\vector}[3]{\begin{pmatrix} #1 \\ #2 \ifthenelse{\equal{#3}{}}{}{\\#3}\end{pmatrix}}
\def\refine{{\rm refine}}
\newcounter{constantsnumber}
\def\namec#1#2{%
  \ifthenelse{\equal{#1}{lipschitz}}{C_{\rm lip}}{%
  \ifthenelse{\equal{#1}{gmres}}{C_{\rm GMRES}}{%
  \ifthenelse{\equal{#1}{monotone}}{C_{\rm mon}}{%
  \ifthenelse{\equal{#1}{cea}}{C_{\mbox{\rm\scriptsize C\'ea}}}{%
  \ifthenelse{\equal{#1}{norm}}{C_{\rm norm}}{%
  \ifthenelse{\equal{#1}{mon}}{{C}_{\rm mon}}{
  \ifthenelse{\equal{#1}{lip}}{{C}_{\rm lip}}{
  \ifthenelse{\equal{#1}{monA}}{c_{\rm mon}}{
  \ifthenelse{\equal{#1}{lipA}}{c_{\rm lip}}{
  \ifthenelse{\equal{#1}{normequiv1}}{c_{\rm norm}}{ 
  \ifthenelse{\equal{#1}{inv}}{C_{\rm inv}}{ 
  \ifthenelse{\equal{#1}{inv2}}{\widetilde{C}_{\rm inv}}{ 
  \ifthenelse{\equal{#2}{newcounter}}{\refstepcounter{constantsnumber}\label{const#1}}{}C_{\ref{const#1}}}%
  }}}}}}}}}}}}
\def\setc#1{\namec{#1}{newcounter}}
\def\c#1{\namec{#1}{reference}}
\newtheorem{theorem}{Theorem}
\newtheorem{proposition}[theorem]{Proposition}
\newtheorem{lemma}[theorem]{Lemma}
\newtheorem{algorithm}[theorem]{Algorithm}
\newtheorem{remark}[theorem]{Remark}
\def\subsection#1
\bf\arabic{section}.\arabic{subsection}.~#1.~}
\begin{document}

\title[Optimal preconditioning for the symmetric and non-symmetric coupling of
adaptive finite elements and boundary elements]{Optimal preconditioning for the symmetric and non-symmetric coupling of
adaptive finite elements and boundary elements}
\date{\today}

\author{M.~Feischl}
\author{T.~F\"uhrer}
\author{D.~Praetorius}
\author{E.P.~Stephan}
\address{Institute for Analysis and Scientific Computing,
       Vienna University of Technology,
       Wiedner Hauptstra\ss{}e 8--10,
       A-1040 Wien, Austria}
\email{Thomas.Fuehrer@tuwien.ac.at {\it(corresponding author)}}
\email{\{Michael.Feischl,\,Dirk.Praetorius\}@tuwien.ac.at}
\address{Institute for Applied Mathematics,
       Leibniz University Hannover,
       Welfengarten 1,
       D-30167 Hannover, Germany}
\email{stephan@ifam.uni-hannover.de}

\keywords{FEM-BEM coupling, preconditioner, multilevel additive Schwarz,
adaptivity}
\subjclass[2010]{65N30, 65N38, 65F08}
\thanks{{\bf Acknowledgment:}
  The research of the authors MF, TF, and DP is supported through the FWF research
  project \textit{Adaptive Boundary Element Method}, see {\tt
  http://www.asc.tuwien.ac.at/abem/}, funded by the Austrian Science Fund (FWF)
  under grant P21732, as well as through the \textit{Innovative Projects
  Initiative} of Vienna University of Technology. 
  Moreover, MF and DP are partially funded through the FWF doctoral school 
  \emph{Dissipation and Dispersion in Nonlinear PDEs}, funded under grant 
  W1245. 
  This support is thankfully
  acknowledged.
}
\begin{abstract}
We analyze a multilevel diagonal additive Schwarz preconditioner for
the adaptive coupling of FEM and BEM for a linear 2D Laplace transmission
problem. We rigorously prove that the condition number of the 
preconditioned system stays uniformly bounded, independently of the
refinement level and the local mesh-size of the underlying adaptively
refined triangulations. Although the focus is on the non-symmetric 
Johnson-N\'ed\'elec one-equation coupling, the principle ideas also apply
to other formulations like the symmetric FEM-BEM coupling.
Numerical experiments underline our theoretical findings.
\end{abstract}
\maketitle

\section{Introduction}
There exist plenty of works on preconditioning of FEM-BEM coupling equations,
covering mainly the symmetric coupling with quasi-uniform meshes,
see~\cite{cckuhnlanger,fs09,harbrecht03,hms99,heuerstep98,kuhnsteinbach,ms98} and the
references therein.
In contrast to that, only little is known on preconditioning of the non-symmetric
Johnson-N\'ed\'elec coupling, see e.g.~\cite{meddahi98}, and also on
preconditioning of \emph{adaptive} FEM-BEM couplings.
It is the main goal of this paper to close this gap and to extend the existing
analysis to the case of the (adaptive) symmetric as well as non-symmetric Johnson-N\'ed\'elec
coupling~\cite{johned}.
For the symmetric coupling~\cite{costabel,han90}, the approach of Bramble \&
Pasciak~\cite{brampasc88} applies, which guarantees
positive definiteness and symmetry of the Galerkin matrix with respect to a special inner
product~\cite{cckuhnlanger,harbrecht03,kuhnsteinbach}.
Therefore, efficient iterative solvers designed for symmetric and positive
definite matrices are applicable.
However, due to the non-symmetry, such an approach may not work for the
Johnson-N\'ed\'elec coupling in general.
In~\cite{meddahi98}, it was assumed that the coupling boundary is smooth. Hence,
the double-layer integral operator $\dlp$ is compact. 
The system matrix can therefore be split into a symmetric part plus a compact
perturbation part $\Kmat$ (the Galerkin matrix of the double-layer integral operator).
Preconditioning is done only on the symmetric part with the theory
of~\cite{brampasc88}, and convergence results for iterative solvers can then be obtained by compact
perturbation theory assuming that the mesh-size of the coarsest mesh is
sufficiently small.
In general, however, the coupling boundary is not smooth. Therefore, the preconditioner
theory must not rely on compactness of $\dlp$. 

To state the contributions of the current work, we consider the non-symmetric stiffness matrix of the (stabilized) 
Johnson-N\'ed\'elec coupling, which reads in block-form
\begin{align}\label{eq:intro:jnmat}
  \AA_\jn := \begin{pmatrix}
    \AA_\material & -\Mmat^T \\
    \tfrac12 \Mmat-\Kmat & \AA_\slp
  \end{pmatrix} + \stabmat\stabmat^T  \in \R^{(N+M)\times (N+M)},
\end{align}
see Section~\ref{sec:main} below.
Here, $\stabmat \in\R^{N+M}$
denotes an appropriate stabilization vector, which ensures positive definiteness of
$\AA_\jn$. The $N\times N$ matrix block $\AA_\material$ is the (positive semi-definite) Galerkin matrix of
the FEM part, and the $M\times M$ matrix block $\AA_\slp$ is
the Galerkin matrix of the simple-layer integral operator $\slp$.
As in~\cite{fs09,ms98}, we deal with block-diagonal preconditioners of the form
\begin{align}\label{eq:intro:prec}
  \PPrec_\jn = \begin{pmatrix}
    \PPrec_\fem & \bignull \\
    \bignull & \PPrec_\slp
  \end{pmatrix}.
\end{align}
Here, the appropriate operator $\fem : H^1(\Omega)\to (H^1(\Omega))^*$ induces a coercive, symmetric, and
bounded bilinear form $\dual{\fem(\cdot)}{(\cdot)} \simeq
\norm\cdot{H^1(\Omega)}^2$.
The symmetric and positive definite matrices $\PPrec_\fem$ resp. $\PPrec_\slp$
are spectrally equivalent to the
symmetric and positive definite Galerkin matrices $\AA_\fem$ resp. $\AA_\slp$, i.e.
\begin{subequations}\label{eq:intro:specest}
\begin{align}
  d_\fem \dual{\PPrec_\fem\xx}\xx_2 &\leq \dual{\AA_\fem\xx}\xx_2 \leq D_\fem
  \dual{\PPrec_\fem\xx}\xx_2 \quad\text{for all }\xx\in\R^N, \\
   d_\slp \dual{\PPrec_\slp\pphi}\pphi_2 &\leq \dual{\AA_\slp\pphi}\pphi_2 \leq D_\slp
  \dual{\PPrec_\slp\pphi}\pphi_2 \quad\text{for all }\pphi\in\R^M,
\end{align}
\end{subequations}
where $\AA_\fem$ is strongly related to the FEM block $\AA_\material$ of the
FEM-BEM system~\eqref{eq:intro:jnmat},
see~\eqref{eq:galerkin:matrices}--\eqref{eq:femoperator} below.
Inspired by~\cite{ms98}, we prove that the condition number of $\PPrec_\jn^{-1}\AA_\jn$ as well as the
number of iterations to reduce the relative residual by a factor $\tau$ in the
preconditioned GMRES algorithm with inner product $\dual\cdot\cdot_{\PPrec_\jn}$ depends
only on $\max\{D_\fem,D_\slp\} / \min\{d_\fem,d_\slp\}$.

Usually, the condition number of Galerkin matrices $\AA_\fem$
and $\AA_\slp$ on adaptively refined meshes hinges
on the global mesh-ratio as well as on the number of degrees of freedom.
Therefore, the construction of optimal preconditioners for iterative
solvers is a necessary task.
Here, optimality is understood in the sense that the condition number of the
preconditioned matrix is independent of the mesh-size and the degrees of
freedom.

Very recently, it was proven in~\cite{xch10} for 2D FEM with energy
space $H^1$ that a local multilevel
additive Schwarz preconditioner $\PPrec_\fem$ is optimal, i.e. the constants $d_\fem,D_\fem$
in~\eqref{eq:intro:specest}
are independent of the mesh-size, the number of degrees of freedom, and the number of levels.
Here, ``local'' means, that scaling at each level is done only on newly created
nodes plus neighbouring nodes, where the associated basis functions have
changed.
An analogous result for 2D and 3D hypersingular integral equations with energy
space $H^{1/2}$ has been derived by the authors~\cite{ffps}.
In~\cite{ffps,xch10}, the proofs rely on a stable space decomposition
of the discrete subspaces in $H^1$ resp. $H^{1/2}$.
Alternatively,~\cite{xcn09} provides stable subspace decompositions in $H^1$ for
higher order elements in any dimension on bisection grids.

In the present work, we prove the optimality of some local multilevel additive
Schwarz preconditioner $\PPrec_\slp$ for 2D weakly-singular integral equations
with energy space $H^{-1/2}$.
The proof is derived by postprocessing of the corresponding result for the
hypersingular integral equation~\cite{ffps}. Combining this with the result of~\cite{xch10},
we prove optimality of $\PPrec_\jn$ for the FEM-BEM coupling.

{\bf Notation.} Throughout the work, we explicitly state all constants and their
dependencies in all statements of results. In proofs, however, we use the notation
$a\lesssim b$ to abbreviate $a\leq C b$ with a constant $C>0$ which is clear
from the context. Moreover, $a\simeq b$ abbreviates $a\lesssim b\lesssim a$. Furthermore, the entries of a vector $\mathbf b$
or a matrix $\AA$ are denoted by $(\mathbf b)_j$ resp. $(\AA)_{jk}$.
By $\dual\cdot\cdot$, we denote the duality brackets between a
Hilbert space $\HH$ and its dual $\HH^*$.

{\bf Outline.} The remainder of this work is organized as follows:
In Section~\ref{sec:main}, we recall the basic facts on the Johnson-N\'ed\'elec
coupling and define the admissible mesh-refinement strategies. 
We also formulate the preconditioned GMRES Algorithm~\ref{alg:gmres}, which is
required to state the main result (Theorem~\ref{thm:main}).
In Section~\ref{sec:spectral:estimates}, we prove the spectral
estimates~\eqref{eq:intro:specest}.
Section~\ref{sec:proof} adapts the analysis for the symmetric
coupling~\cite{ms98} and contains the proof of the main result
(Theorem~\ref{thm:main}).
The short Section~\ref{sec:extensions} deals with extensions of the developed theory to the
symmetric coupling and the one-equation Bielak-MacCamy coupling.
Numerical examples from Section~\ref{sec:examples} underline our theoretical
predictions and conclude the work.


\section{Johnson-N\'ed\'elec coupling and main result}\label{sec:main}

\subsection{Model problem and analytical setting}
Let $\Omega\subset \R^2$ be a polygonal and simply connected domain 
with boundary $\Gamma = \partial\Omega$.
We consider the following Laplace transmission problem in free space:
\begin{subequations}\label{eq:model}
\begin{align}
  -\div(\material\nabla u) &= f &&\text{in } \Omega, \\
  -\Delta u^{\rm ext} &= 0 &&\text{in }\Omegaext := \R^2
  \backslash\overline\Omega, \label{jn:ext}\\
  u- u^{\rm ext} &= u_0 &&\text{on }\Gamma, \\
  (\material\nabla u - \nabla u^{\rm ext})\cdot \normal &= \phi_0
  &&\text{on }\Gamma, \\
  |u^{\rm ext}(x)| &= \OO(1/|x|) && \text{as } |x|\to\infty. \label{eq:model:rad}
\end{align}
\end{subequations}
Here, $\normal$ denotes the outer normal on $\Gamma$, and 
$\material\in L^\infty(\Omega)$ satisfies
$\material(x) \in \R_{\rm sym}^{2\times 2}$ with uniform bounds on the
maximal resp.\ minimal eigenvalue
\begin{align}\label{eq:const:material}
 0 < c_\material := {\rm ess}\inf_{\hspace*{-6mm}x\in\Omega} \evmin(\material(x)) 
  \le {\rm ess}\sup_{\hspace*{-6mm}x\in\Omega} \evmax(\material(x)) 
  =: C_\material < \infty.
\end{align}
With $H^1(\Omega)$ resp. $H^{1/2}(\Gamma)$ and its dual
$H^{-1/2}(\Gamma)=H^{1/2}(\Gamma)^*$, we denote the usual Sobolev spaces
on $\Omega$ resp. $\Gamma$.
For given data $f\in L^2(\Omega)$, $u_0\in H^{1/2}(\Gamma)$, and $\phi_0 \in
H^{-1/2}(\Gamma)$, it is well-known that the model problem~\eqref{eq:model} admits a unique
solution $u\in H^1(\Omega)$, $u^{\rm ext} \in H_{\rm loc}^1(\Omegaext)$
with finite energy $\nabla u^{\rm ext}\in L^2(\Omegaext)$, if we impose the compatibility condition
\begin{align}
  \dual{f}1_\Omega + \dual{\phi_0}1_\Gamma = 0
\end{align}
to ensure the radiation condition~\eqref{eq:model:rad}. Here, $\dual\cdot\cdot_\Omega$ stands for the
$L^2(\Omega)$ inner product, whereas $\dual\cdot\cdot_\Gamma$ denotes the
extended $L^2(\Gamma)$ inner product.
\subsection{Johnson-N\'ed\'elec coupling}
For the formulation of the Johnson-N\'ed\'elec coupling~\cite{johned}, the
exterior solution~\eqref{jn:ext} is formulated by Green's third formula.
The latter gives rise to the simple-layer and double-layer integral operator 
\begin{subequations}\label{def:bio}
\begin{align}\label{def:slp}
 \slp\in L(H^{-1/2+s}(\Gamma);H^{1/2+s}(\Gamma)),\quad
 &\slp\phi(x) := \int_\Gamma\,G(x,y)\phi(y)\,dy,\\
 \label{def:dlp}
 \dlp\in L(H^{1/2+s}(\Gamma);H^{1/2+s}(\Gamma)),\quad
 &\dlp v(x) := \int_\Gamma\partial_{\normal(y)}G(x,y)\,v(y)\,dy,
\end{align}
where $G(x,y) := -\frac{1}{2\pi}\,\log|x-y|$ denotes the fundamental solution
of the 2D Laplacian and $\partial_{\normal}(\cdot)$ is the normal derivative. 
Note that boundedness holds for all $-1/2\le s\le1/2$.
In addition, our analysis requires the hypersingular integral operator
\begin{align}\label{def:hyp}
 \hyp\in L(H^{1/2+s}(\Gamma);H^{-1/2+s}(\Gamma)),\quad
 &\hyp v(x) := -\partial_{\normal(x)}\int_\Gamma\partial_{\normal(y)}G(x,y)\,v(y)\,dy,
\end{align}
\end{subequations}
where the integral is understood as finite part integral. It is known that
$\slp$ and $\hyp$ are symmetric in the sense of 
\begin{align}\label{eq:bio:symmetry}
 \dual{\phi}{\slp\psi}_\Gamma 
 = \dual{\psi}{\slp\phi}_\Gamma
 \quad\text{and}\quad
 \dual{\hyp v}{w}_\Gamma
 = \dual{\hyp w}{v}_\Gamma
\end{align}
for all $\phi,\psi\in H^{-1/2}(\Gamma)$ and 
$v,w\in H^{1/2}(\Gamma)$.
Moreover, the assumption
$\diam(\Omega)<1$ ensures $H^{-1/2}(\Gamma)$-ellipticity of $\slp$,
\begin{align}\label{eq:slp:ellipticity}
 c_\slp\,\norm{\psi}{H^{-1/2}(\Gamma)}^2
 \le \dual{\psi}{\slp\psi}_\Gamma
 \quad\text{for all }\psi\in H^{-1/2}(\Gamma).
\end{align}
Finally, $\hyp$ is semi-elliptic with kernel being the constant functions,
\begin{align}\label{eq:hyp:semielliptic}
 \hyp1 = 0 
 \quad\text{and}\quad
 c_\hyp\,\norm{v}{H^{1/2}(\Gamma)}^2
 \le \dual{\hyp v}{v}_\Gamma
 \quad\text{for all }v\in H^{1/2}(\Gamma)
 \text{ with }\int_\Gamma v\,dx = 0.
\end{align}
The constants $c_\slp,c_\hyp>0$ depend only on $\Omega$.

With the definitions~\eqref{def:bio} of the layer integral operators, the
model problem~\eqref{eq:model} is equivalently recast by the 
Johnson-N\'ed\'elec coupling:
Given $f\in L^2(\Omega)$, $u_0 \in H^{1/2}(\Gamma)$, and $\phi_0 \in H^{-1/2}(\Gamma)$, find $(u,\phi) \in
\HH:= H^1(\Omega) \times H^{-1/2}(\Gamma)$ such that
\begin{subequations}\label{eq:jn}
  \begin{align}
    \dual{\material \nabla u}{\nabla v}_\Omega - \dual\phi{v}_\Gamma &= \dual{f}v_\Omega +
    \dual{\phi_0}v_\Gamma, \\
    \dual\psi{(\tfrac12-\dlp)u + \slp\phi}_\Gamma &=
    \dual\psi{(\tfrac12-\dlp)u_0}_\Gamma
  \end{align}
\end{subequations}
for all $(v,\psi)\in\HH$. 
Setting $(u,\phi)=(1,0)=(v,\psi)$ in~\eqref{eq:jn}, we see that the 
(non-stabilized) linear
operator associated to the left-hand side of~\eqref{eq:jn} is indefinite.
However, let $1/2 \leq c_\dlp < 1$ denote the contraction constant of the 
double-layer potential~\cite{sw}. Following the analysis 
in~\cite{os,sayas09,s2}, also (stabilized) Galerkin formulations 
of~\eqref{eq:jn} admit unique solutions, if the ellipticity constant
$c_\material$ from~\eqref{eq:const:material} satisfies 
\begin{align}\label{eq:contraction}
  c_\material \geq c_\dlp / 4
\end{align}
and if the equation is either \emph{explicitly stabilized}~\cite{os,s2} or if the
discrete subspace of $H^{-1/2}(\Gamma)$ contains the constant functions~\cite{sayas09}.
In~\cite{affkmp}, the result of~\cite{sayas09} is reproduced with a new proof.
Introducing the notion of \emph{implicit stabilization}, 
an equivalent elliptic operator equation of~\eqref{eq:jn} is derived
which fits in the frame of the Lax-Milgram lemma and 
thus leads to non-symmetric, but positive definite Galerkin 
matrices. The main result of~\cite{affkmp} reads as follows
(and also holds for a strongly monotone, but nonlinear material tensor $A$):

\begin{lemma}\label{lem:jn:stab}
Let~\eqref{eq:contraction} be satisfied. For $(u,\phi),(v,\psi)\in\HH=\XX\times\YY$, define
\begin{subequations}\label{eq:jn:staboperator}
  \begin{align}
    \begin{split}
    \dual{\jn(u,\phi)}{(v,\psi)} &:= \dual{\material \nabla u}{\nabla v}_\Omega - 
    \dual\phi{v}_\Gamma + \dual\psi{(\tfrac12-\dlp)u + \slp\phi}_\Gamma   \\
    &\qquad +\dual{1}{(\tfrac12-\dlp)u+\slp\phi}_\Gamma\dual{1}{(\tfrac12-\dlp)v+\slp\psi}_\Gamma
  \end{split}
  \end{align}
  as well as 
  \begin{align}\begin{split}
    \dual{F}{(v,\psi)} &:= \dual{f}v_\Omega + \dual{\phi_0}v_\Gamma + \dual{\psi}{(\tfrac12-\dlp)u_0}_\Gamma 
    \\ &\qquad + \dual{1}{(\tfrac12-\dlp)u_0}_\Gamma\dual{1}{(\tfrac12-\dlp)v+\slp\psi}_\Gamma.
  \end{split}
  \end{align}
\end{subequations}
  Let $\XX^\ell\subseteq\XX$ and $\YY^\ell\subseteq\YY$ be arbitrary closed
  subspaces with $1\in\YY^\ell$. Then, the pair $(u_\ell,\phi_\ell)\in\HH^\ell
  := \XX^\ell\times\YY^\ell$ solves the Galerkin formulation of the
  Johnson-N\'ed\'elec coupling~\eqref{eq:jn} 
  \begin{subequations}\label{eq:jn:disrete}
    \begin{align}
      \dual{\material \nabla u_\ell}{\nabla v_\ell}_\Omega -
      \dual{\phi_\ell}{v_\ell}_\Gamma &= \dual{f}{v_\ell}_\Omega +
      \dual{\phi_0}{v_\ell}_\Gamma, \\
      \dual{\psi_\ell}{(\tfrac12-\dlp)u_\ell + \slp\phi_\ell}_\Gamma &=
      \dual{\psi_\ell}{(\tfrac12-\dlp)u_0}_\Gamma
    \end{align}
  \end{subequations} 
  for all $(v_\ell,\psi_\ell)\in\HH^\ell$, if and only if it solves 
  the operator formulation
  \begin{align}\label{eq:varform:stabilized}
  \begin{split}
    \dual{\jn(u_\ell,\phi_\ell)}{(v_\ell,\psi_\ell)} &= \dual{F}{(v_\ell,\psi_\ell)}
  \end{split}
  \end{align}
  for all $(v_\ell,\psi_\ell)\in\HH^\ell$.
  The operator $\jn:\HH\to\HH^*$ is non-symmetric, but linear, continuous,
  and elliptic, and the constants
  \begin{align}
    c_\jn := \inf_{\bignull\neq (u,\phi)\in\HH}\frac{\dual{\jn(u,\phi)}{(u,\phi)}}{\norm{(u,\phi)}\HH^2}
    \quad\text{and}\quad
    C_\jn := \sup_{\bignull\neq (u,\phi)\in\HH}\frac{\norm{\jn(u,\phi)}{\HH^*}}{\norm{(u,\phi)}\HH}
  \end{align}
  satisfy $0<c_\jn\le C_\jn<\infty$ and depend only on $c_\material$ and $C_\material$ from~\eqref{eq:const:material} as well as on $\Omega$. Moreover, $F\in\HH^*$ is a continuous linear functional on $\HH$.
  In particular,~\eqref{eq:varform:stabilized} (and hence
  also~\eqref{eq:jn:disrete}) admits a unique solution
  $(u_\ell,\phi_\ell)\in\HH^\ell$, and there holds the C\'ea-type estimate
  \begin{align}
    \norm{(u,\phi)-(u_\ell,\phi_\ell)}\HH \leq \frac{C_\jn}{c_\jn}
    \inf\limits_{(v_\ell,\psi_\ell)\in\HH^\ell}
    \norm{(u,\phi)-(v_\ell,\psi_\ell)}\HH,
  \end{align}
  where $(u,\phi)\in\HH$ denotes the unique solution of the 
  Johnson-N\'ed\'elec coupling~\eqref{eq:jn}.\qed
\end{lemma}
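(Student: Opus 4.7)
The plan is to first establish the equivalence of the Galerkin system~\eqref{eq:jn:disrete} with the stabilized operator equation~\eqref{eq:varform:stabilized} by direct algebra, then verify that $\jn$ is linear, continuous, and coercive on $\HH$ (and hence on every closed subspace $\HH^\ell$), and finally apply the Lax-Milgram lemma on $\HH^\ell$ to obtain unique solvability and the C\'ea estimate in the standard way.

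For the equivalence, I rewrite~\eqref{eq:varform:stabilized} as the two Galerkin equations~\eqref{eq:jn:disrete} plus the rank-one perturbation
\[
  \alpha(u_\ell,\phi_\ell)\,\dual{1}{(\tfrac12-\dlp)v_\ell+\slp\psi_\ell}_\Gamma,
  \qquad
  \alpha(u_\ell,\phi_\ell):=\dual{1}{(\tfrac12-\dlp)(u_\ell-u_0)+\slp\phi_\ell}_\Gamma.
\]
If~\eqref{eq:jn:disrete} holds, testing the second equation with $\psi_\ell=1\in\YY^\ell$ yields $\alpha(u_\ell,\phi_\ell)=0$, so the stabilization terms on both sides of~\eqref{eq:varform:stabilized} cancel. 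Conversely, testing~\eqref{eq:varform:stabilized} with $(v_\ell,\psi_\ell)=(0,1)\in\HH^\ell$, the Galerkin residual collapses to $\alpha(u_\ell,\phi_\ell)$ while the rank-one contribution equals $-\alpha(u_\ell,\phi_\ell)\,\dual{1}{\slp 1}_\Gamma$; since $\dual{1}{\slp 1}_\Gamma>0$ by~\eqref{eq:slp:ellipticity}, this forces $\alpha(u_\ell,\phi_\ell)=0$ and therefore~\eqref{eq:jn:disrete} for arbitrary test pairs.

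The main obstacle is coercivity. Setting $(v,\psi)=(u,\phi)$ and simplifying gives
\[
  \dual{\jn(u,\phi)}{(u,\phi)} = \dual{\material\nabla u}{\nabla u}_\Omega + \dual{\phi}{\slp\phi}_\Gamma - \dual{\phi}{(\tfrac12+\dlp)u}_\Gamma + \bigl|\dual{1}{(\tfrac12-\dlp)u+\slp\phi}_\Gamma\bigr|^2.
\]
The first two terms are nonnegative and control $\norm{\nabla u}{L^2(\Omega)}^2$ and $\norm{\phi}{H^{-1/2}(\Gamma)}^2$ via~\eqref{eq:const:material} and~\eqref{eq:slp:ellipticity}. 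The indefinite cross term I would bound by the Steinbach-Wendland contraction estimate for $\tfrac12+\dlp$ together with Young's inequality; the assumption $c_\material\ge c_\dlp/4$ in~\eqref{eq:contraction} is exactly what is needed so that a strictly positive fraction of $\dual{\material\nabla u}{\nabla u}_\Omega + \dual{\phi}{\slp\phi}_\Gamma$ survives after absorption. The delicate point is the component of $u$ that is constant along $\Gamma$, on which $\hyp$ only defines a seminorm (see~\eqref{eq:hyp:semielliptic}); this is precisely the mode that the quadratic stabilization term is tailored to control, and combining it with a Poincar\'e-type argument upgrades the seminorm control of $\nabla u$ to the full $H^1(\Omega)$-norm.

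Linearity of $\jn$ is immediate, and continuity of $\jn$ and of $F$ follows from boundedness of $\slp,\dlp$ (see~\eqref{def:bio}), the trace operator $H^1(\Omega)\to H^{1/2}(\Gamma)$, and the upper bound $C_\material$ in~\eqref{eq:const:material}; all resulting constants then depend only on $c_\material$, $C_\material$, and $\Omega$. Applied on the closed subspace $\HH^\ell$ (noting that $1\in\YY^\ell$ is required for the equivalence just proved), the Lax-Milgram lemma yields unique $(u_\ell,\phi_\ell)\in\HH^\ell$ solving~\eqref{eq:varform:stabilized}. The C\'ea-type estimate with constant $C_\jn/c_\jn$ follows in the usual way from Galerkin orthogonality of $(u,\phi)-(u_\ell,\phi_\ell)$ in $\HH^\ell$, coercivity from below, and continuity from above.
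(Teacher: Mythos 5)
The paper does not prove Lemma~\ref{lem:jn:stab}; it states the result with a \verb|\qed| and attributes it to the cited reference on implicit stabilization. Your outline (establish the algebraic equivalence via testing with $(0,1)$, then verify the Lax--Milgram hypotheses for $\jn$ and invoke C\'ea) is indeed the route taken there, so the overall strategy is sound. Two issues with the execution, however.

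First, a sign error in the equivalence argument. Testing~\eqref{eq:varform:stabilized} with $(v_\ell,\psi_\ell)=(0,1)$ and writing $\alpha:=\dual{1}{(\tfrac12-\dlp)(u_\ell-u_0)+\slp\phi_\ell}_\Gamma$, the non-stabilized part of the residual is $\alpha$ and the rank-one contribution is $+\alpha\,\dual{1}{\slp 1}_\Gamma$ (both the bilinear form and $F$ carry the same factor $\dual{1}{(\tfrac12-\dlp)v_\ell+\slp\psi_\ell}_\Gamma=\dual{1}{\slp 1}_\Gamma$, and their difference is again $\alpha\,\dual{1}{\slp1}_\Gamma$). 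You obtain $\alpha\bigl(1+\dual{1}{\slp1}_\Gamma\bigr)=0$, which forces $\alpha=0$ regardless of the exact size of $\dual{1}{\slp1}_\Gamma$. Your stated version with the opposite sign would give $\alpha\bigl(1-\dual{1}{\slp1}_\Gamma\bigr)=0$, which does not force $\alpha=0$ when $\dual{1}{\slp1}_\Gamma=1$.

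Second, the coercivity argument is only a sketch and omits the ingredient that actually makes the absorption work. After reducing to
\begin{align*}
\dual{\material\nabla u}{\nabla u}_\Omega+\dual{\phi}{\slp\phi}_\Gamma-\dual{\phi}{(\tfrac12+\dlp)u}_\Gamma+\bigl|\dual{1}{(\tfrac12-\dlp)u+\slp\phi}_\Gamma\bigr|^2,
\end{align*}
Steinbach--Wendland gives control of $(\tfrac12+\dlp)u|_\Gamma$ in terms of the $\hyp$-energy of the trace, not in terms of $\norm{\nabla u}{L^2(\Omega)}$. To let a Young-type absorption land on the FEM Dirichlet form you need, in addition, the (nontrivial) domain estimate $\dual{\hyp u|_\Gamma}{u|_\Gamma}_\Gamma\le\norm{\nabla u}{L^2(\Omega)}^2$ relating the hypersingular form to the volume seminorm; only then does tracking the constants produce the threshold $c_\material\ge c_\dlp/4$ rather than a generic smallness condition. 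Equally, the claim that ``a Poincar\'e-type argument upgrades the seminorm control of $\nabla u$ to the full $H^1(\Omega)$-norm'' via the quadratic stabilization is plausible but not carried out: one has to check that the constant mode of $u$ survives in the term $\dual{1}{(\tfrac12-\dlp)u+\slp\phi}_\Gamma$ uniformly in $\phi$ (using $(\tfrac12-\dlp)1=1$), which involves a further absorption against the $\slp\phi$ contribution. As written, the heart of the ellipticity proof --- the part where assumption~\eqref{eq:contraction} actually enters --- is asserted rather than established.

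Minor remark: the C\'ea step also needs the observation that the continuous solution of~\eqref{eq:jn} solves~\eqref{eq:varform:stabilized} on all of $\HH$, which follows from the same equivalence argument applied with $\HH^\ell=\HH$ (valid since $1\in\YY$); worth stating explicitly.
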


\subsection{Adaptive mesh-refinement and discrete spaces}\label{sec:refinement}
\begin{figure}[t]
 \centering
 \psfrag{T0}{}
 \psfrag{T1}{}
 \psfrag{T2}{}
 \psfrag{T3}{}
 \psfrag{T4}{}
 \psfrag{T12}{}
 \psfrag{T34}{}
 \includegraphics[width=35mm]{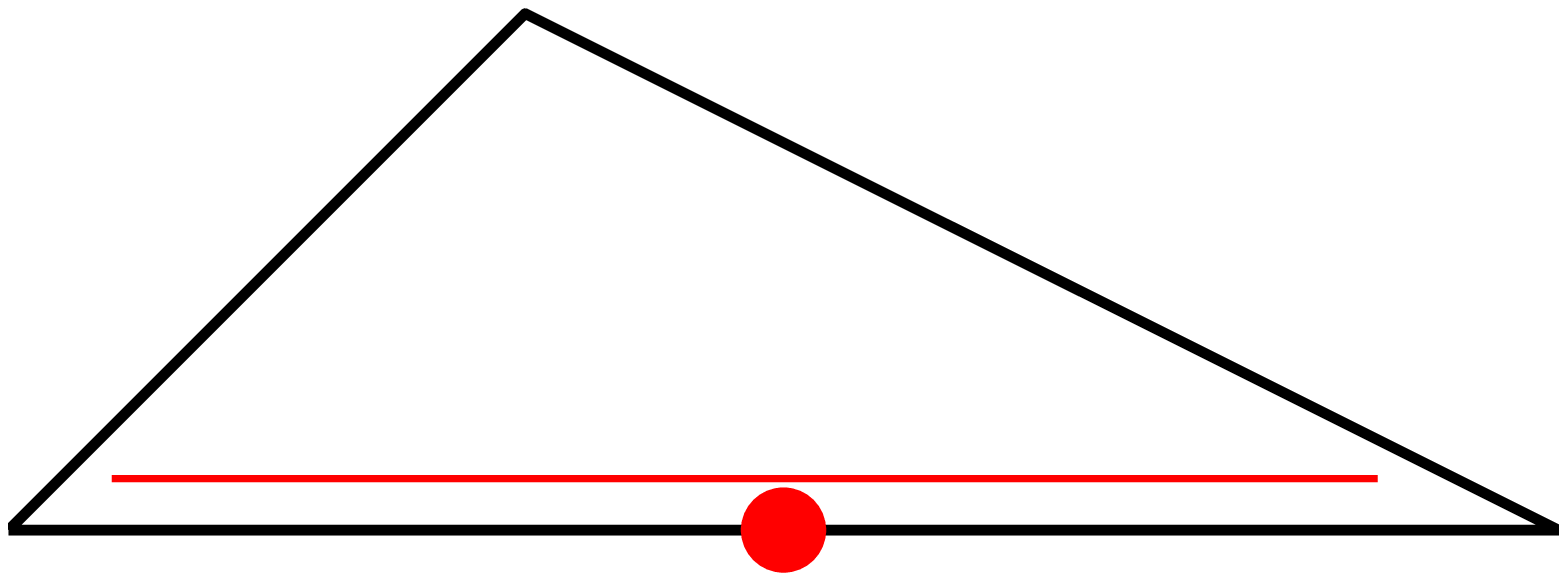} \quad
 \includegraphics[width=35mm]{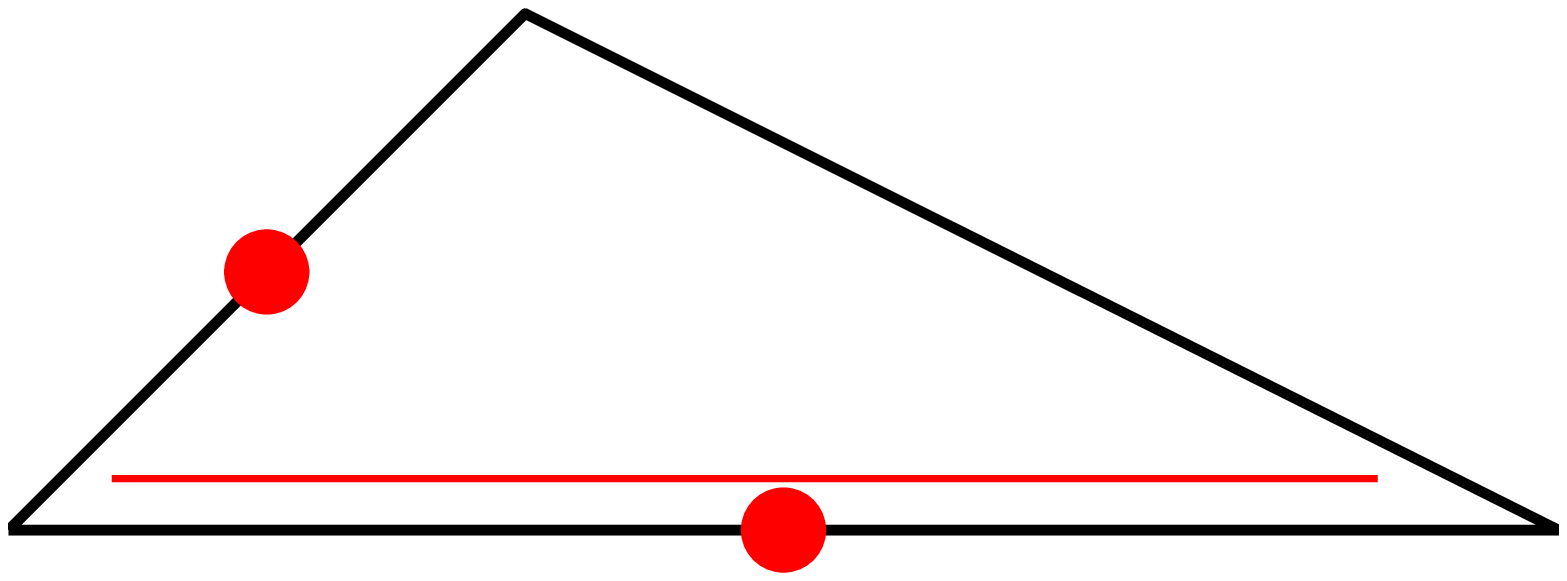} \quad
 \includegraphics[width=35mm]{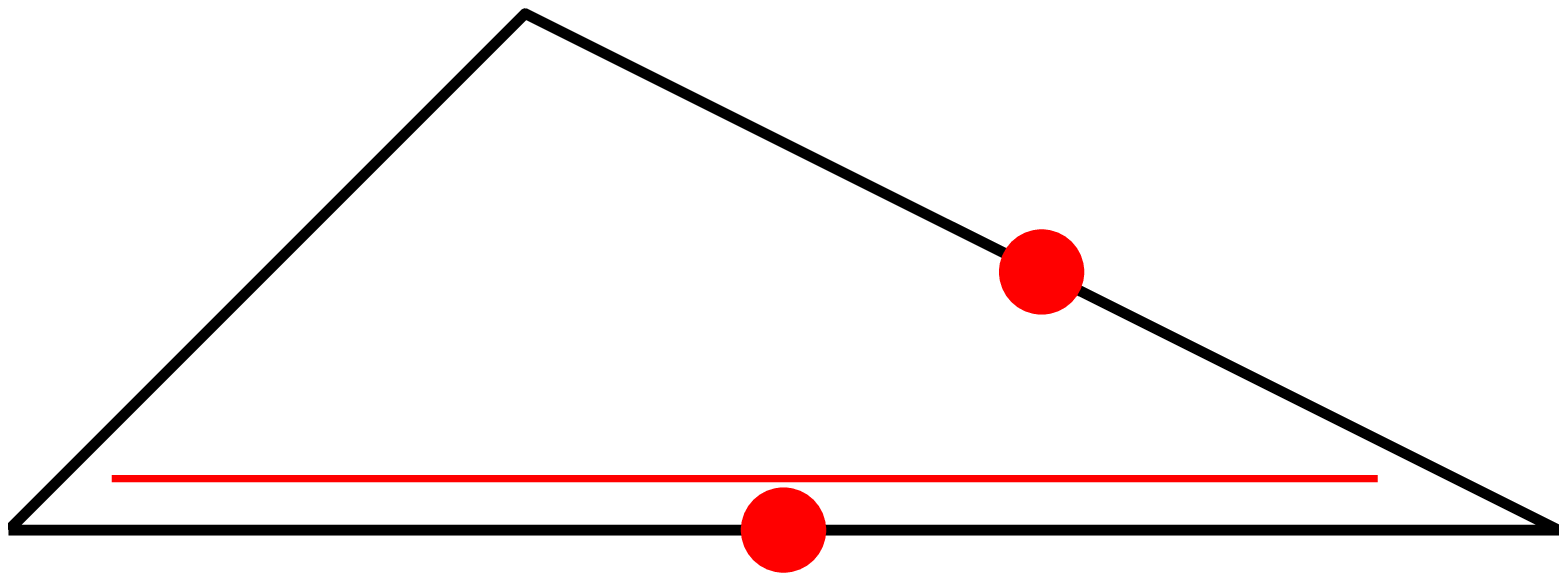} \quad
 \includegraphics[width=35mm]{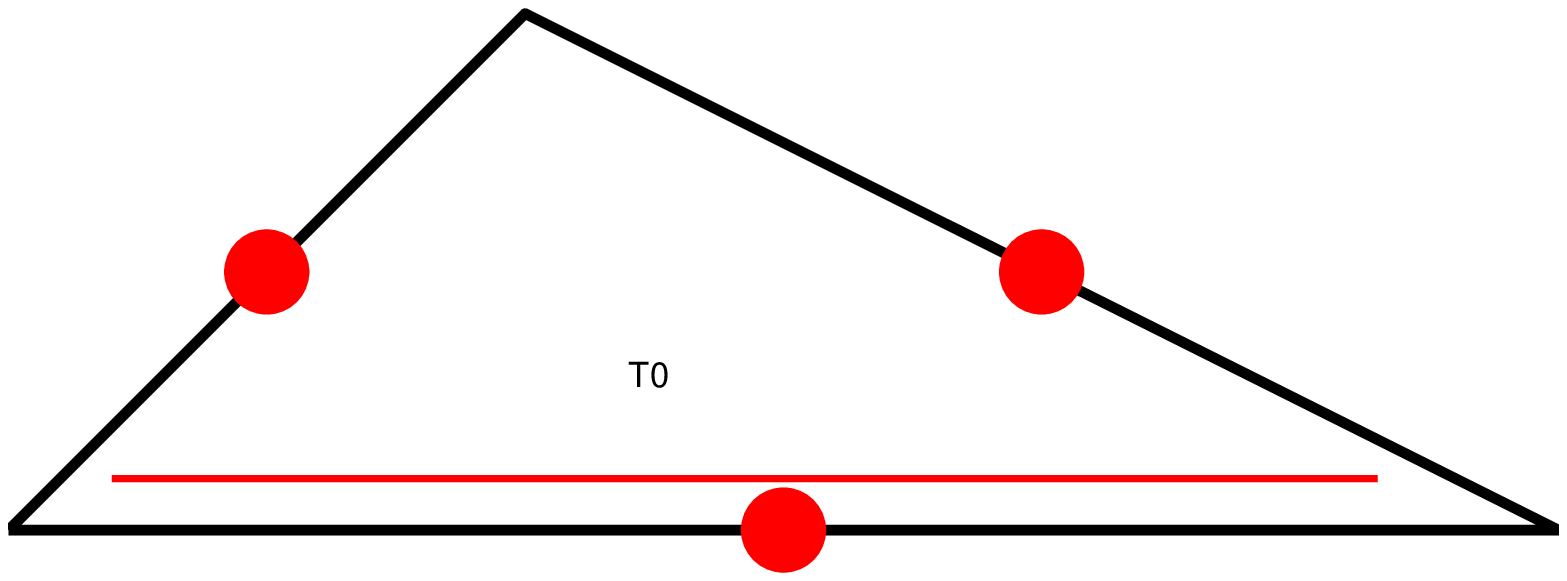} \\
 \includegraphics[width=35mm]{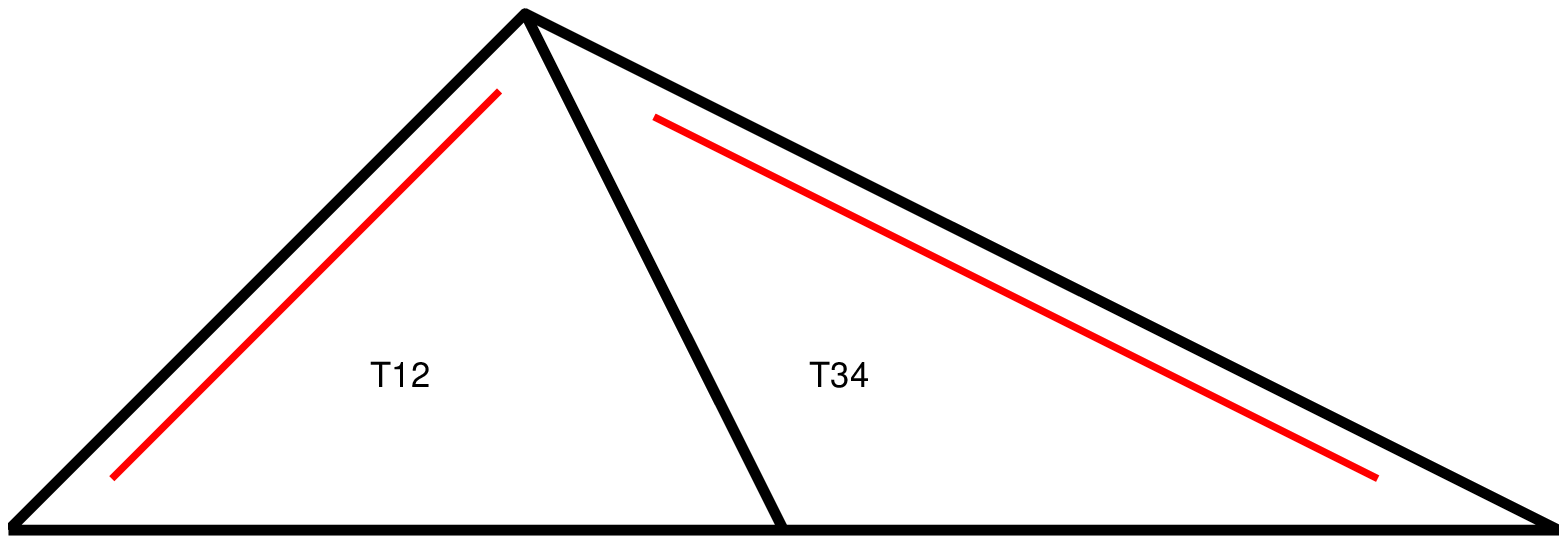} \quad
 \includegraphics[width=35mm]{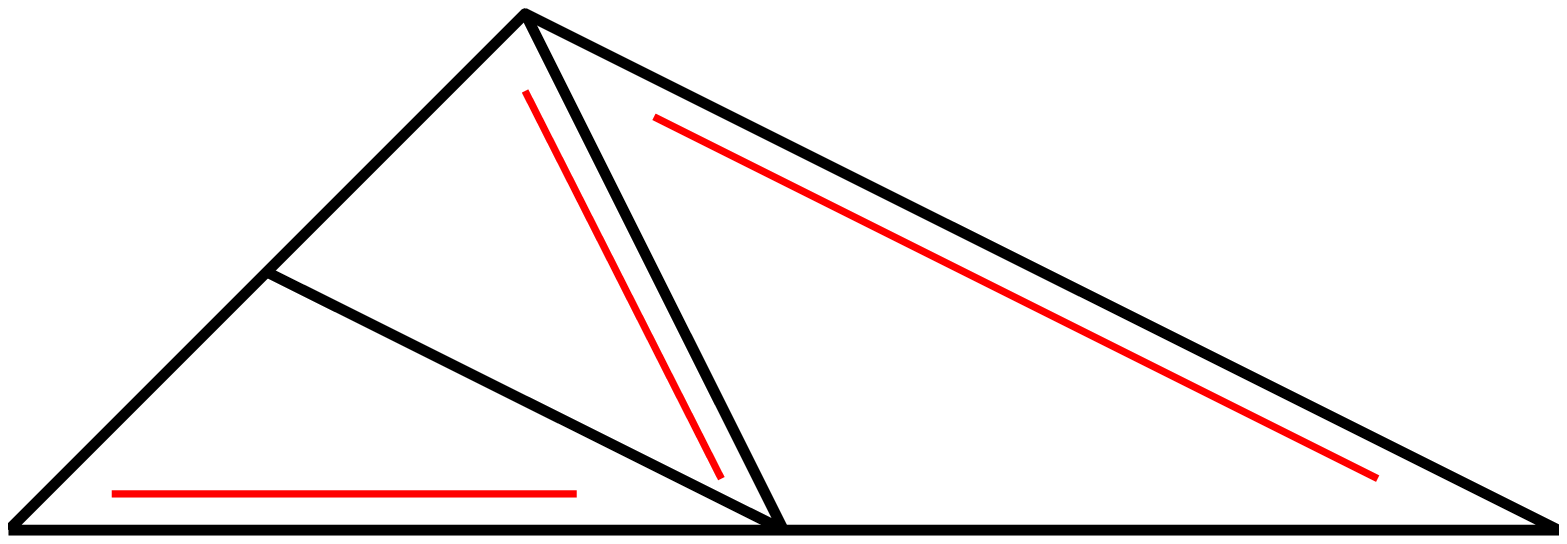}\quad
 \includegraphics[width=35mm]{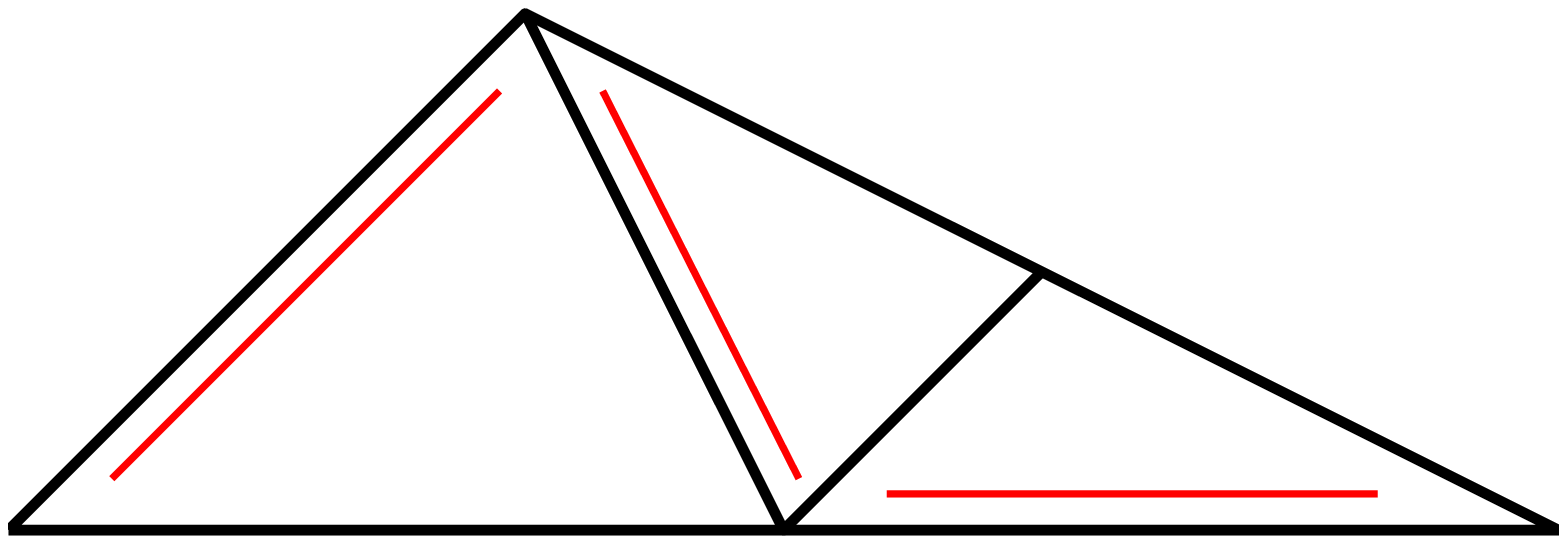}\quad
 \includegraphics[width=35mm]{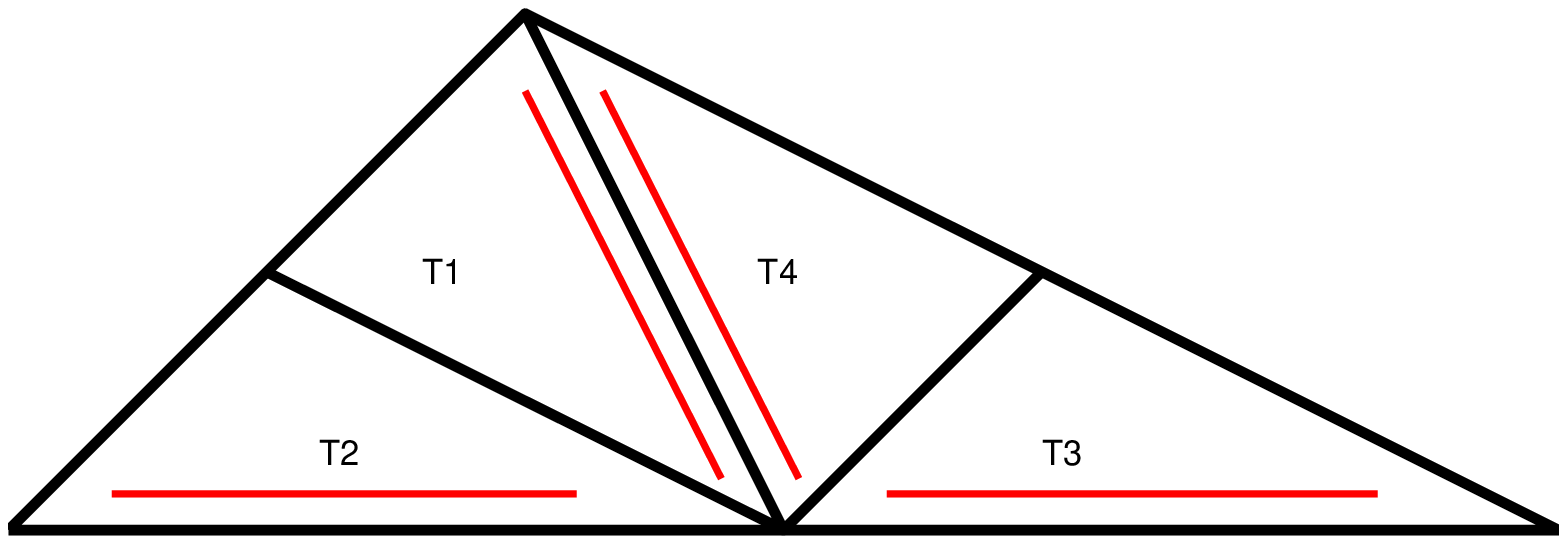}
 \caption{
 For each triangle $T\in\TT_\ell^\Omega$, there is one fixed 
 \emph{reference edge},
 indicated by the double line (left, top). Refinement of $T$ is done by bisecting
 the reference edge, where its midpoint becomes a new node. The reference
 edges of the son triangles are opposite to this newest vertex (left, bottom).
 To avoid hanging nodes, one proceeds as follows:
 We assume that certain edges of $T$, but at least the reference edge,
 are marked for refinement (top).
 Using iterated newest vertex bisection, the element is then split into
 2, 3, or 4 son triangles (bottom).}
 \label{fig:nvb:bisec}
\end{figure}
Let $\TT_0^\Omega$ be a given conforming initial triangulation of $\Omega$ into
compact and non-degenerate triangles. We suppose that a sequence
$\TT_{\ell+1}^\Omega = \refine(\TT_\ell^\Omega,\MM_\ell^\Omega)$ of refined
triangulations is obtained by newest vertex bisection, see
Figure~\ref{fig:nvb:bisec}, where $\TT_{\ell+1}^\Omega$ is the coarsest conforming
mesh such that all marked elements $T\in\MM_\ell^\Omega \subseteq
\TT_\ell^\Omega$ have been bisected. For our analysis, the set $\emptyset\neq\MM_\ell^\Omega
\subseteq \TT_\ell^\Omega$ of marked elements is arbitrary, but in
practice obtained from local a posteriori refinement indicators, see
e.g.~\cite{affkmp}. We note that newest vertex bisection guarantees uniform
shape regularity in the sense that
\begin{align}
  \sup\limits_{T\in\TT_\ell^\Omega} \frac{\diam(T)}{|T|^{1/2}} \leq \gamma
  <\infty \quad\text{for all } \ell\in\N_0,
\end{align}
where $\gamma$ depends only on the initial mesh $\TT_0^\Omega$, see
e.g.~\cite{verfuerth,kpp} and the references therein.

Let $\TT_0^\Gamma$ be a given initial partition of the coupling boundary
$\Gamma$ into compact line segments. We suppose that a sequence
$\TT_{\ell+1}^\Gamma = \refine(\TT_\ell^\Gamma,\MM_\ell^\Gamma)$ of refined
partitions is obtained by bisection, where the refined elements
$T\in\TT_\ell^\Gamma\backslash\TT_{\ell+1}^\Gamma$ are refined into two sons
$T',T''\in\TT_{\ell+1}^\Gamma\backslash\TT_\ell^\Gamma$ of half length, i.e.
$T=T'\cup T''$ with $\diam(T')=\diam(T'')=\diam(T)/2$ and where at least the
marked elements $T\in\MM_\ell^\Gamma \subseteq \TT_\ell^\Gamma$ are
refined, i.e.
$\MM_\ell^\Gamma \subseteq \TT_\ell^\Gamma \backslash \TT_{\ell+1}^\Gamma$.
In addition, we suppose that the meshes are uniformly $\gamma$-shape regular in
the sense that
\begin{align}
  \frac{\diam(T)}{\diam(T')} \leq \gamma < \infty \quad\text{for all }
  T,T'\in\TT_\ell^\Gamma \text{ with } T\cap T' \neq \emptyset \text{ and all }
  \ell\in\N_0,
\end{align}
where $\gamma$ depends only on the initial partition $\TT_0^\Gamma$. Possible
choices include the 1D bisection algorithms from~\cite{cmam}. A further choice
is to consider the partition $\TT_\ell^\Gamma := \TT_\ell^\Omega|_\Gamma$ of
$\Gamma$ which is induced by the triangulation $\TT_\ell^\Omega$ of $\Omega$.
Formally, such a coupling of $\TT_\ell^\Omega$ and $\TT_\ell^\Gamma$ is not
required for the analysis, but simplifies the implementation and is therefore
used in the numerical experiments of Section~\ref{sec:examples}.

In this work, we consider lowest-order Galerkin elements. We approximate
functions $u\in H^1(\Omega)$ by functions $u_\ell \in \XX^\ell$ and functions $\phi\in
H^{-1/2}(\Gamma)$ by functions $\phi_\ell\in\YY^\ell$, where
\begin{align}
  \XX^\ell &:= \set{v\in C(\Omega)}{v|_T \text{ is affine for all } T\in\TT_\ell^\Omega}, \\
  \YY^\ell &:= \set{\psi\in L^2(\Gamma)}{\psi|_T \text{ is
  constant for all } T\in\TT_\ell^\Gamma}.
\end{align}

Let $\NN_\ell^\Omega$ denote the set of nodes of the triangulation
$\TT_\ell^\Omega$, and let
$\NN_\ell^\Gamma$ denote the set of nodes of the triangulation $\TT_\ell^\Gamma$.
For $z\in\NN_\ell^\Omega$ resp.\ $z\in\NN_\ell^\Gamma$, we define the patch
$\omega_\ell^\Omega(z) := \set{T \in \TT_\ell^\Omega}{ z \in T}$ resp.
$\omega_\ell^\Gamma(z) := \set{T \in \TT_\ell^\Gamma}{ z \in T}$.
For the construction of optimal multilevel preconditioners on adaptively refined
triangulations, we need the following subsets of $\NN_\ell^\Omega$ resp.\ 
$\NN_\ell^\Gamma$:
\begin{subequations}\label{def:Ntilde}
\begin{align}
  \widetilde\NN_0^\Omega &:= \NN_0^\Omega,
  &\widetilde\NN_\ell^\Omega &:= \NN_\ell^\Omega \backslash \NN_{\ell-1}^\Omega \cup 
  \set{z\in\NN_{\ell-1}^\Omega}{\omega_\ell^\Omega(z) \subsetneqq
  \omega_{\ell-1}^\Omega(z)} \quad\text{for }\ell\geq 1, \\
  \widetilde\NN_0^\Gamma &:= \NN_0^\Gamma
  &\widetilde\NN_\ell^\Gamma &:= \NN_\ell^\Gamma \backslash \NN_{\ell-1}^\Gamma \cup 
  \set{z\in\NN_{\ell-1}^\Gamma}{\omega_\ell^\Gamma(z) \subsetneqq
  \omega_{\ell-1}^\Gamma(z)} \quad\text{for }\ell\geq 1.
\end{align}
\end{subequations}
The sets $\widetilde\NN_\ell^\Omega$ resp. $\widetilde\NN_\ell^\Gamma$ thus
consist of the new nodes plus the old nodes, where the corresponding patches have
changed.
A visualization of $\widetilde\NN_\ell^\Omega$ is given in
Figure~\ref{fig:Ntilde}.
For each node $z\in\NN_\ell^\Omega$, $\eta_z^\ell \in\XX^\ell$ denotes the associated hat-function
with $\eta_z^\ell(z') = \delta_{zz'}$ for all $z'\in\NN_\ell^\Omega$, where
$\delta_{zz'}$ is Kronecker's delta.

\begin{figure}[t]
 \centering
 \includegraphics[width=65mm]{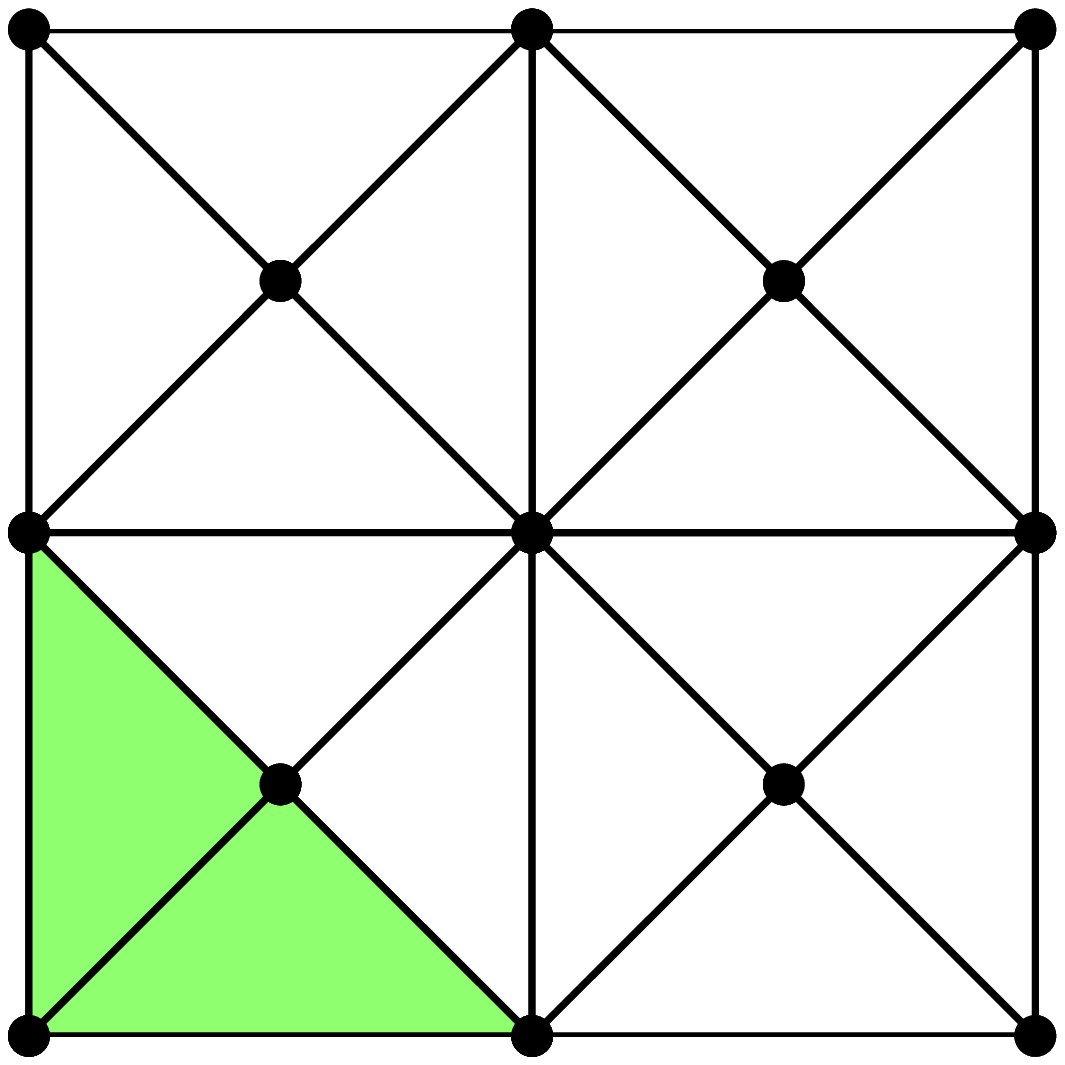} \quad
 \includegraphics[width=65mm]{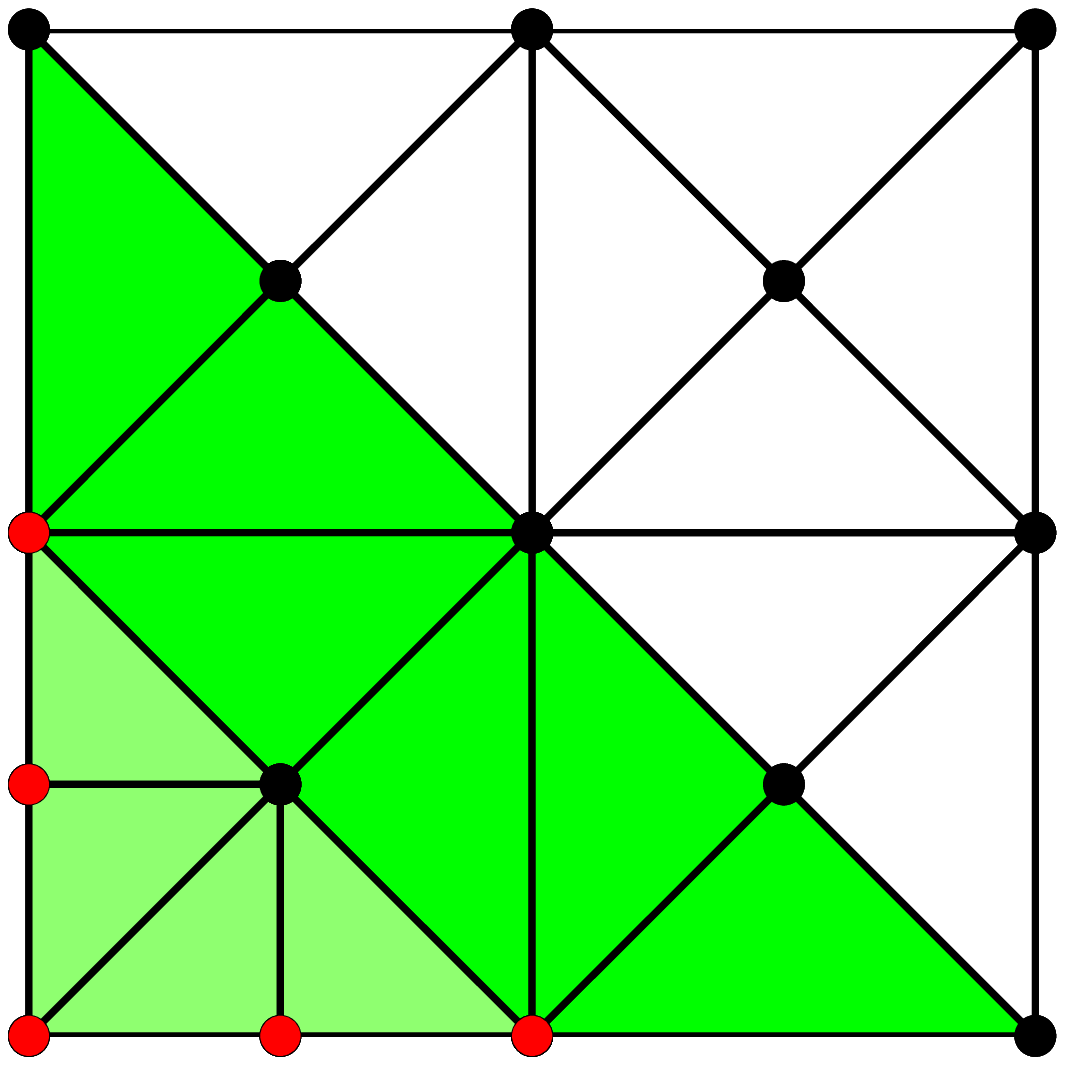} 
 \caption{
 The left figure shows a FEM mesh $\TT_{\ell-1}^\Omega$, where the two elements
 (green) are marked for refinement. Bisection of these two elements provides the mesh
 $\TT_\ell^\Omega$ (right), where two \textit{new nodes} are created.
 The set $\widetilde\NN_\ell^\Omega$ consists of these new nodes plus their 
 \textit{immediate neighbours} (red), where the corresponding patches have changed.
 The union of the support of the basis functions associated to nodes in
 $\widetilde\NN_\ell^\Omega$ is given by the light- and dark-green areas in the right figure.
 }
 \label{fig:Ntilde}
\end{figure}

\subsection{Galerkin system and block-diagonal preconditioning}
Let $\{\eta_{z_j}^\ell\}_{j=1}^{N_\ell}$ with $N_\ell :=
\# \NN_\ell^\Omega$ denote the nodal basis of $\XX^\ell$,
and let $\{\psi_{T_j}\}_{j=1}^{M_\ell}$ with $M_\ell := \# \TT_\ell^\Gamma$ denote a basis of $\YY^\ell$, where
$\psi_{T_j}$ denotes the characteristic function on $T_j \in\TT_\ell^\Gamma$.
The Galerkin matrix $\AA_\jn = \AA_\jn^\ell \in \R^{(N_\ell+M_\ell)\times
(N_\ell+M_\ell)}$ of the operator $\jn$ has the form
\begin{align*}
  \AA_\jn = \begin{pmatrix}
    \AA_\material & - \Mmat^T  \\
    \tfrac12 \Mmat-\Kmat & \AA_\slp
  \end{pmatrix}
  + \stabmat\stabmat^T,
\end{align*}
where the block matrices $\AA_\material\in\R^{N_\ell\times N_\ell}$,
$\AA_\slp\in\R^{M_\ell\times M_\ell}$, $\Mmat \in\R^{M_\ell\times N_\ell}$, and
$\Kmat\in\R^{M_\ell\times N_\ell}$ as well as the stabilization (column) vector
$\stabmat \in\R^{N_\ell+M_\ell}$ are given by
\begin{subequations}\label{eq:galerkin:matrices}
\begin{align}
  (\AA_\material)_{jk} &= \dual{\material\nabla\eta_{z_k}^\ell}{\nabla\eta_{z_j}^\ell}_\Omega
  \quad\text{for }j,k=1,\dots,N_\ell, \\
  (\AA_\slp)_{jk} &= \dual{\psi_{T_j}}{\slp\psi_{T_k}}_\Gamma \quad\text{for }
  j,k=1,\dots,M_\ell, \\
  (\Mmat)_{jk} &= \dual{\psi_{T_j}}{(\eta_{z_k}^\ell)|_\Gamma}_\Gamma
  \quad\text{for } j=1,\dots,M_\ell, k=1,\dots,N_\ell, \\
  (\Kmat)_{jk} &= \dual{\psi_{T_j}}{\dlp (\eta_{z_k}^\ell)|_\Gamma}_\Gamma
  \quad\text{for } j=1,\dots,M_\ell, k=1,\dots,N_\ell, \\
\begin{split}
  (\stabmat)_j &= \dual{1}{(\tfrac12-\dlp)(\eta_{z_j}^\ell)|_\Gamma}_\Gamma
  \quad\text{for }j=1,\dots,N_\ell, \\
  (\stabmat)_{j+N_\ell} &= \dual{1}{\slp\psi_{T_j}}_\Gamma
  \quad\text{for }j=1,\dots,M_\ell.
\end{split}
\end{align}
\end{subequations}
We stress that $\AA_\material$ as well as $\Mmat$ are sparse, whereas $\AA_\slp$
is dense. Note that the number of non-zeros in the matrix $\Kmat$ is bounded by
$\#(\NN_\ell^\Omega \cap \Gamma) \cdot M_\ell$.
Moreover, the application of the rank-$1$ stabilization matrix
$\stabmat\stabmat^T$ can be implemented efficiently with complexity
$\OO(N_\ell+M_\ell)$ for use with an iterative solver.

The discrete variational formulation~\eqref{eq:varform:stabilized} is equivalent to solving
the following linear system of equations: Find $\UU \in \R^{N_\ell+M_\ell}$ such that
\begin{align}\label{eq:jn_mat}
  \AA_\jn \UU  = \FF \in\R^{N_\ell+M_\ell},
\end{align}
where the right-hand side vector $\FF\in\R^{N_\ell+M_\ell}$ reads
\begin{align*}
  (\FF)_j &= 
    \dual{F}{(\eta_{z_j}^\ell,0)} \quad\text{for }j=1,\dots,N_\ell, \\
  (\FF)_{j+N_\ell} &= \dual{F}{(0,\psi_{T_j})} \quad\text{for }j=1,\dots,M_\ell.
\end{align*}
To formulate our block-diagonal preconditioner, we require an appropriate
operator $\fem$ which is related to the FEM-domain part of $\jn$.
The next lemma follows from a Rellich compactness argument,
since $\dlp c = -c/2$ for all constants $c\in\R$.
Details are analogous to, e.g.,~\cite[Lemma~10]{affkmp} and therefore
left to the reader.

\begin{lemma}\label{lemma:A}
  For $u,v\in H^1(\Omega)$, define
  \begin{subequations}\label{eq:femoperator}
  \begin{align}
    \dual{\fem u}v := \dual{\material \nabla u}{\nabla v}_\Omega +
    \dual{1}{(\tfrac12-\dlp)u}_\Gamma\dual{1}{(\tfrac12-\dlp)v}_\Gamma.
  \end{align}
  Then, the operator $\fem: H^1(\Omega)\to (H^1(\Omega))^*$ is linear, symmetric, continuous, and elliptic, and the constants
  \begin{align}\label{eq:const:A}
    c_\fem := \inf_{0\neq u\in H^1(\Omega)}\frac{\dual{\fem
    u}{u}}{\norm{u}{H^1(\Omega)}^2}
    \quad\text{and}\quad  
    C_\fem := \sup_{0\neq u\in H^1(\Omega)}\frac{\norm{\fem u}{(H^1(\Omega))^*}}{\norm{u}{H^1(\Omega)}}
  \end{align}
  \end{subequations}
  satisfy $0<c_\fem \le C_\fem < \infty$ and depend only on $c_\material$ and
  $C_\material$ from~\eqref{eq:const:material} as well as on $\Omega$.\qed
\end{lemma}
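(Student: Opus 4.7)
The plan is to verify linearity, symmetry, continuity, and ellipticity in turn, with the ellipticity being the only nontrivial step and the one where the author's hint about Rellich compactness is used.

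Linearity is immediate from the bilinearity of $\dual\cdot\cdot_\Omega$, of $\dual\cdot\cdot_\Gamma$, and the linearity of $\dlp$. For symmetry I would observe that $\material(x)\in\R^{2\times 2}_{\rm sym}$ makes the first term $\dual{\material\nabla u}{\nabla v}_\Omega$ symmetric in $(u,v)$, while the second term is a tensor product of the same real-valued functional evaluated at $u$ and $v$, hence trivially symmetric.

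For continuity I would bound the two contributions separately. The first is handled by $|\dual{\material\nabla u}{\nabla v}_\Omega|\le C_\material\,\norm{u}{H^1(\Omega)}\norm{v}{H^1(\Omega)}$ using the pointwise upper bound from~\eqref{eq:const:material}. For the second I would use boundedness of $\dlp:H^{1/2}(\Gamma)\to H^{1/2}(\Gamma)$ (see~\eqref{def:dlp} with $s=0$) together with continuity of the trace $H^1(\Omega)\to H^{1/2}(\Gamma)$, yielding $|\dual{1}{(\tfrac12-\dlp)u}_\Gamma|\lesssim \norm{u}{H^1(\Omega)}$ with a constant depending only on $\Omega$. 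Multiplying these estimates gives the desired bound with $C_\fem$ depending only on $C_\material$ and $\Omega$.

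The main obstacle is the ellipticity estimate, since $\dual{\material\nabla u}{\nabla u}_\Omega$ alone only controls the $H^1$-seminorm. Here I would use a Rellich compactness/Peetre argument. Suppose for contradiction that no $c_\fem>0$ exists; then there is a sequence $(u_n)\subset H^1(\Omega)$ with $\norm{u_n}{H^1(\Omega)}=1$ and $\dual{\fem u_n}{u_n}\to 0$. From~\eqref{eq:const:material} the first term bounds $c_\material\norm{\nabla u_n}{L^2(\Omega)}^2\le \dual{\fem u_n}{u_n}$, so $\nabla u_n\to 0$ in $L^2(\Omega)$. Rellich's theorem provides a subsequence converging in $L^2(\Omega)$; combined with $\nabla u_n\to 0$ this subsequence is Cauchy in $H^1(\Omega)$ with limit $u\in H^1(\Omega)$ of norm one and $\nabla u=0$, i.e.\ $u$ is a nonzero constant. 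On the other hand $\dual{1}{(\tfrac12-\dlp)u_n}_\Gamma\to 0$ by the second term of $\dual{\fem u_n}{u_n}$, and the identity $\dlp 1=-1/2$ gives $(\tfrac12-\dlp)u=u$, so $0=\dual{1}{(\tfrac12-\dlp)u}_\Gamma=u\,|\Gamma|$, a contradiction. This yields $c_\fem>0$; inspection of the argument shows the constant depends only on $c_\material$ and $\Omega$ (through Rellich, the trace, $|\Gamma|$, and the continuity of $\dlp$), finishing the proof.
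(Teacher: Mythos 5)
Your proof is correct and follows essentially the same route that the paper indicates: the paper states that the lemma ``follows from a Rellich compactness argument, since $\dlp c = -c/2$ for all constants $c\in\R$,'' and defers the details to \cite[Lemma~10]{affkmp}. Your write-up is exactly the intended compactness (Peetre-type) argument, correctly using $(\tfrac12-\dlp)c = c$ for constants to rule out the kernel.
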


In this work, we investigate block-diagonal preconditioners of the form
\begin{align}\label{eq:precond_mat}
  \PPrec_\jn = \begin{pmatrix}
    \PPrec_\fem &  \bignull\\
     \bignull & \PPrec_\slp
  \end{pmatrix},
\end{align}
where $\PPrec_\fem$ is a ``good'' approximation of the Galerkin
matrix $\AA_\fem$ corresponding to the operator $\fem$ from Lemma~\ref{lemma:A} with respect to
the nodal basis of $\XX^\ell$, and
$\PPrec_\slp$ is a ``good'' approximation of the Galerkin matrix $\AA_\slp$.
Our construction below ensures that $\PPrec_\fem, \PPrec_\slp$ and hence $\PPrec$ are symmetric and positive
definite.

Instead of~\eqref{eq:jn_mat}, we solve the preconditioned system
\begin{align}\label{eq:jn_prec}
  \PPrec_\jn^{-1} \AA_\jn \UU = \PPrec_\jn^{-1}\FF \in\R^{N_\ell+M_\ell}.
\end{align}
For this non-symmetric system of linear equations, we use a
preconditioned GMRES algorithm~\cite{saad}, which will be discussed in the
following subsection.
The preconditioned Galerkin matrix reads in block form 
\begin{align}
  \PPrec_\jn^{-1} \AA_\jn = \begin{pmatrix}
    \PPrec_\fem^{-1} & \bignull \\
            \bignull & \PPrec_\slp^{-1}
  \end{pmatrix}
  \left[ \begin{pmatrix}
    \AA_\material & - \Mmat^T  \\
    \tfrac12 \Mmat-\Kmat & \AA_\slp
  \end{pmatrix}
+ \stabmat\stabmat^T \right] .
\end{align}

\subsection{Preconditioned GMRES algorithm}\label{sec:gmres}
Let $\PPrec\in\R^{N\times N}$ denote a symmetric and positive definite matrix
and let $\AA \in \R^{N\times N}$
denote a (possibly) non-symmetric, but positive definite matrix.
Let, $\ee^k\in\R^N$ denote the standard unit vector with entries $(\ee^k)_j = \delta_{kj}$.
The preconditioned GMRES
algorithm reads as follows.
\begin{algorithm}[GMRES]\label{alg:gmres}
  {\bf Input: } Matrices $\PPrec,\AA \in\R^{N\times N}$, right-hand side vector
  $\FF\in\R^N$, initial guess $\UU^0\in\R^N$, relative tolerance $\tau>0$, and maximum number of iterations
  $K\in\N$ with $K\leq N$.
  \begin{itemize}
    \item[(a)] Allocate memory for the matrix $\mathbf H
      \in\R^{(K+1)\times K}$, the vectors $\VV^i\in\R^N$, $i=1,\dots,K$, $\WW\in\R^N$,
      $\UU\in\R^N$, $\RR^0\in\R^N$ and $\RR\in\R^N$.
    \item[(b)] Compute initial residual $\RR^0 \leftarrow \PPrec^{-1}
      (\FF-\AA\UU^0)$ and $\VV^1 \leftarrow \RR^0/\norm{\RR^0}\PPrec$.
    \item[(c)] Set counter $k\leftarrow 1$, and initialize $(\mathbf H)_{ij}
      \leftarrow 0$ for all $i=1,\dots,K+1, j=1,\dots, K$.
  \end{itemize}
  Iterate the following steps \emph{(i)--(vii)}:
  \begin{itemize}
    \item[(i)] Compute $\WW \leftarrow \PPrec^{-1} \AA\VV^k$.
    \item[(ii)] For all $i=1,\dots,k$ compute
      \begin{align*}
        (\mathbf{H})_{ik} \leftarrow \dual{\WW}{\VV^i}_\PPrec
        \quad\text{and}\quad \WW \leftarrow \WW - \VV^i (\mathbf{H})_{ik}.
      \end{align*}
    \item[(iii)] Compute $(\mathbf{H})_{k+1,k} \leftarrow \norm\WW\PPrec$.
    \item[(iv)] Define the sub-matrix $\overline{\mathbf H}^k
      \in\R^{(k+1) \times k}$
      with entries $(\overline{\mathbf{H}}^k)_{ij} := (\mathbf{H})_{ij}$ for
      $i=1,\dots,k+1, j=1,\dots,k$ and compute
      \begin{align*}
        \yy^k \leftarrow \argmin\limits_{\yy\in\R^k} \norm{\norm{\RR_0}\PPrec
        \ee^1 - \overline{\mathbf{H}}^k \yy}2.
      \end{align*}
    \item[(v)] Compute $\UU \leftarrow \UU^0 + (\VV^1\cdots\VV^k)\yy^k$ and
      $\RR \leftarrow \PPrec^{-1} (\FF-\AA\UU)$.
    \item[(vi)] Stop iteration if $\norm{\RR}\PPrec \leq \tau
      \norm{\RR^0}\PPrec$ or $k\geq
      K$.
    \item[(vii)] Otherwise, compute $\VV^{k+1} \leftarrow
      \WW/(\mathbf{H})_{k+1,k}$, update counter $k\leftarrow k+1$, and goto
      (iv).
  \end{itemize}
  {\bf Output: } $\UU$, $k$, and $\norm{\RR}\PPrec/\norm{\RR^0}\PPrec$.
\end{algorithm}
\noindent
Note that for $\PPrec$ being the identity matrix, Algorithm~\ref{alg:gmres}
is the usual GMRES algorithm with inner product $\dual\cdot\cdot_2$, see
e.g.~\cite{saad}.
The main memory consumption is given by the vectors
$\VV^i\in\R^N$, while the matrix
$\overline{\mathbf H}^k\in \R^{(k+1)\times k}$ in step (iv) is a sub-block of the matrix $\mathbf H
\in\R^{(K+1)\times K}$ and thus does not need to be stored explicitly.

As is often the case for multilevel preconditioners, the application of
$\PPrec^{-1}$ on a vector is known, whereas the application of $\PPrec$ is
unknown. We therefore note that the GMRES
Algorithm~\ref{alg:gmres} can be implemented without using $\PPrec$ to compute
the inner products $\dual\cdot\cdot_\PPrec$ and the norms $\norm\cdot\PPrec$.
To this end, one replaces the computation of $\VV^1$ by $\widetilde\VV^1 \leftarrow
(\FF-\AA\UU^0)/\norm{\RR^0}\PPrec$, where $\norm{\RR^0}\PPrec^2 =
\dual{\PPrec^{-1}(\FF-\AA\UU^0)}{\FF-\AA\UU^0}_2$. Then, $\VV^1 \leftarrow
\PPrec^{-1} \widetilde\VV^1$.
In step (i), we compute $\widetilde\WW \leftarrow \AA\VV^k$ instead of $\WW$.
Step (ii) is replaced by
$(\mathbf H)_{ik} \leftarrow \dual{\PPrec^{-1}\widetilde\WW}{\VV^i}_\PPrec =
\dual{\widetilde\WW}{\VV^i}_2$ and $\widetilde\WW\leftarrow
\widetilde\WW-\widetilde\VV^i (\mathbf H)_{ik}$.
Instead of step (iii), we then compute $(\mathbf H)_{k+1,k} \leftarrow
\norm{\PPrec^{-1}\widetilde\WW}\PPrec =
\sqrt{\dual{\PPrec^{-1}\widetilde\WW}{\widetilde\WW}_2}$.
Note that $\norm{\RR}\PPrec^2 = \dual{\PPrec^{-1}(\FF-\AA\UU)}{\FF-\AA\UU}_2$ in
step (vi). Lastly, in step (vii) we replace the computation of $\VV^{k+1}$ by
$\widetilde\VV^{k+1} \leftarrow \widetilde\WW / (\mathbf H)_{k+1,k}$ and
$\VV^{k+1} \leftarrow
\PPrec^{-1}\widetilde\VV^{k+1}$.

\subsection{Local multilevel preconditioner and main
result}\label{sec:main:LMLD}
For both the FEM part $\PPrec_\fem$ and BEM part
$\PPrec_\slp$ in~\eqref{eq:precond_mat}, we will use local multilevel preconditioners
which are optimal in the sense that the condition numbers of the preconditioned
systems are independent of the number of levels $L$ and the mesh-size $h_L$.

For the preconditioner $\PPrec_\fem$, we use an additive Schwarz multilevel diagonal
preconditioner similar to the one in~\cite{xch10}. 
Recall $\widetilde\NN_\ell^\Omega \subseteq\NN_\ell^\Omega$
from~\eqref{def:Ntilde}
Define
\begin{align}
  \widetilde\XX^\ell := \linhull\set{\eta_z^\ell}{z
  \in\widetilde\NN_\ell^\Omega} \subseteq\XX^\ell.
\end{align}
Let $\widetilde\embedding^\ell : \widetilde\XX^\ell \to \XX^L$ denote the canonical
embedding with matrix representation $\widetilde\embmat^\ell \in \R^{\widetilde
N_\ell^\Omega \times N_L^\Omega}$ and $\widetilde N_\ell^\Omega := \#
\widetilde\NN_\ell^\Omega$. 
Furthermore, let $\widetilde\DD_\fem^\ell$ denote the diagonal of the Galerkin matrix
$\widetilde\AA_\fem^\ell$ with respect to the local set of nodes
$\widetilde\NN_\ell^\Omega$, i.e.
$(\widetilde\AA_\fem^\ell)_{jk} = \dual{\fem \eta_{z_k}^\ell}{\eta_{z_j}^\ell}$
for $j,k=1,\dots,\widetilde N_\ell^\Omega$ and $(\widetilde\DD_\fem^\ell)_{jk} :=
\delta_{jk} (\widetilde\AA_\fem^\ell)_{jj}$.
Then, our local multilevel diagonal preconditioner $\PPrec_\fem$ is defined via
\begin{align}\label{eq:defPrecFEM}
  \PPrec_\fem^{-1} := (\PPrec_\fem^L)^{-1} := \sum_{\ell=0}^L
  \widetilde\embmat^\ell (\widetilde\DD_\fem^\ell)^{-1} (\widetilde\embmat^\ell)^T.
\end{align}
From the definition, we see that this preconditioner corresponds to a
diagonal scaling on each level, where scaling is done on the local subset
$\widetilde\NN_\ell^\Omega$ only.

For all boundary nodes $z\in\NN_\ell^\Gamma$, let
\begin{align}
  \zeta_z^\ell \in \ZZ^\ell := \set{v\in C(\Gamma)}{v|_T \text{ is affine for all }T\in\TT_\ell^\Gamma}
\end{align}
denote the
boundary hat-function with $\zeta_z^\ell(z') = \delta_{zz'}$ for all
$z'\in\NN_\ell^\Gamma$.
To construct an efficient preconditioner $\PPrec_\slp$ for the weakly-singular
integral operator $\slp$ in 2D, we use the Haar-basis functions
$\chi_z^\ell := (\zeta_z^\ell)'$ for all $z\in\NN_\ell^\Gamma$.
Recall $\widetilde\NN_\ell^\Gamma\subseteq \NN_\ell^\Gamma$
from~\eqref{def:Ntilde}.
Let  $\widetilde N_\ell^\Gamma := \# \widetilde\NN_\ell^\Gamma$.
Define the local subspaces
\begin{align}
  \widetilde\YY^\ell := \linhull\set{\chi_z^\ell}{z\in\widetilde\NN_\ell^\Gamma} \subsetneqq
  \YY^\ell, \qquad \widetilde\ZZ^\ell :=
  \linhull\set{\zeta_z^\ell}{z\in\widetilde\NN_\ell^\Gamma} \subseteq\ZZ^\ell, 
\end{align}
and the matrix $\widetilde\Haar^\ell \in\R^{M_\ell \times \widetilde
N_\ell^\Gamma}$
which represents the Haar-basis functions with respect
to the canonical basis $\set{\psi_T}{\psi_T \text{ is characteristic function on }
T\in\TT_\ell^\Gamma}$ of $\YY^\ell$, i.e.
\begin{align*}
  \chi_{z_k}^\ell = \sum_{j=1}^{M_\ell} (\widetilde\Haar^\ell)_{jk} \psi_{T_j}
  \quad\text{for } j=1,\dots,M_\ell, 
  \quad k=1,\dots,\widetilde N_\ell^\Gamma.
\end{align*}
Maue's formula~\cite{maue} states the relation
\begin{align}\label{eq:maueformula}
  \dual{\hyp v}w_\Gamma = \dual{\slp v'}{w'}_\Gamma \quad\text{for all } v,w\in
  H^1(\Gamma)
\end{align}
and thus reveals the identity
\begin{align*}
  \Big( (\widetilde\Haar^\ell)^T \AA_\slp^\ell \widetilde\Haar^\ell \Big)_{jk} =
  \dual{\slp \chi_{z_k}^\ell}{\chi_{z_j}^\ell}_\Gamma = 
  \dual{\hyp \zeta_{z_k}^\ell}{\zeta_{z_j}^\ell}_\Gamma =:
  (\widetilde\AA_\hyp^\ell)_{jk} \quad\text{for }j,k =
  1,\dots,\widetilde N_\ell^\Gamma
\end{align*}
for the Galerkin matrix $\widetilde \AA_\hyp^\ell$ of the hypersingular integral
operator with respect to the nodes $\widetilde\NN_\ell^\Gamma$.
Let $\widetilde\DD_\hyp^\ell$ denote the diagonal of $\widetilde\AA_\hyp^\ell$,
and let $\embedP^\ell : \YY^\ell \to \YY^L$ denote the canonical embedding with
matrix representation $\embedPmat^\ell$.
Moreover, define $D := \dual{1}{\slp 1}_\Gamma$ and let $\onemat \in\R^{M_L}$
denote the vector with constant entries $(\onemat)_j = 1$ for all $j=1,\dots M_L$.
Then, our multilevel diagonal preconditioner $\PPrec_\slp$ for the
weakly-singular integral operator reads
\begin{align}\label{eq:defPrecSLP}
  \PPrec_\slp^{-1} := (\PPrec_\slp^L)^{-1} := \onemat D^{-1} \onemat^T + \sum_{\ell=0}^L 
  \embedPmat^\ell \widetilde\Haar^\ell (\widetilde\DD_\hyp^\ell)^{-1}
  (\widetilde\Haar^\ell)^T (\embedPmat^\ell)^T.
\end{align}

The following theorem is the main result of this work. 
Let $\cond_{\mathbf C}(\AA) = \norm{\AA}{\mathbf C}
\norm{\AA^{-1}}{\mathbf C}$ denote the condition number of the matrix $\AA$ with
respect to the norm $\norm\cdot{\mathbf C}$ induced by the symmetric and
positive definite matrix $\mathbf C$.

\begin{theorem}\label{thm:main}
Let $\PPrec_\fem$ resp. $\PPrec_\slp$ denote the multilevel preconditioners
defined in~\eqref{eq:defPrecFEM} resp.~\eqref{eq:defPrecSLP}.
Then, the condition number
\begin{align}\label{eq:asdf}
\cond_{\PPrec_\jn}(\PPrec_\jn^{-1}\AA_\jn) \leq C < \infty
\end{align}
is uniformly bounded. Moreover,
the $j$-th residual $\RR^j$ from the preconditioned GMRES Algorithm~\ref{alg:gmres}
with $\PPrec = \PPrec_\jn$ from~\eqref{eq:precond_mat} satisfies
\begin{align}
  \norm{\RR^j}{\PPrec_\jn} \leq q_{\rm GMRES}^j \norm{\RR^0}{\PPrec_\jn}.
\end{align}
The constants $C>0$ and $0<q_{\rm GMRES}<1$ depend only on $\Omega$, the
ellipticity and continuity constants of the material tensor $A$
from~\eqref{eq:const:material}, the initial triangulations $\TT_0^\Omega$ and
$\TT_0^\Gamma$, as well as on the mesh-refinement strategy chosen.
\end{theorem}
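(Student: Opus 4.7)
The plan is to follow the Elman-type strategy used in~\cite{ms98} for the symmetric coupling: if the preconditioned operator $\PPrec_\jn^{-1}\AA_\jn$ satisfies both an ellipticity bound $\alpha\,\norm{\UU}{\PPrec_\jn}^2 \leq \dual{\PPrec_\jn^{-1}\AA_\jn \UU}{\UU}_{\PPrec_\jn}$ and a boundedness bound $\norm{\PPrec_\jn^{-1}\AA_\jn \UU}{\PPrec_\jn} \leq \beta\,\norm{\UU}{\PPrec_\jn}$ with constants $\alpha, \beta$ independent of the refinement level, then standard GMRES theory yields $\cond_{\PPrec_\jn}(\PPrec_\jn^{-1}\AA_\jn) \leq \beta/\alpha$ and residual reduction factor $q_{\rm GMRES} = (1-\alpha^2/\beta^2)^{1/2}$. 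Hence everything reduces to proving this pair with mesh-independent constants.

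By rewriting basis-coefficient vectors as discrete functions, one observes $\dual{\PPrec_\jn^{-1}\AA_\jn \UU}{\UU}_{\PPrec_\jn} = \dual{\AA_\jn \UU}{\UU}_2 = \dual{\jn(u_\ell, \phi_\ell)}{(u_\ell, \phi_\ell)}$, where $\UU$ is the coefficient vector of $(u_\ell, \phi_\ell)\in\HH^\ell$. Lemma~\ref{lem:jn:stab} provides the level-independent ellipticity and continuity constants $c_\jn, C_\jn$ of $\jn$ with respect to $\norm{\cdot}{\HH}$. Hence the task further reduces to a level-independent norm equivalence $\norm{(u_\ell,\phi_\ell)}{\HH}^2 \simeq \norm{\UU}{\PPrec_\jn}^2$. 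Using Lemma~\ref{lemma:A} for the FEM block and~\eqref{eq:slp:ellipticity} together with the symmetry~\eqref{eq:bio:symmetry} for the BEM block, one has $\norm{u_\ell}{H^1(\Omega)}^2 \simeq \dual{\AA_\fem \xx}{\xx}_2$ and $\norm{\phi_\ell}{H^{-1/2}(\Gamma)}^2 \simeq \dual{\AA_\slp \pphi}{\pphi}_2$. Thus the spectral equivalences~\eqref{eq:intro:specest} with mesh-independent constants $d_\fem, D_\fem, d_\slp, D_\slp$ would immediately imply $\norm{\UU}{\PPrec_\jn}^2 \simeq \norm{(u_\ell,\phi_\ell)}{\HH}^2$, closing the loop.

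This leaves the genuine work: establishing~\eqref{eq:intro:specest} for the concrete multilevel preconditioners~\eqref{eq:defPrecFEM} and~\eqref{eq:defPrecSLP} with constants independent of the number of levels and the mesh-size. For $\PPrec_\fem$, I would invoke the stable local subspace decomposition result of~\cite{xch10} for adaptively bisected meshes in $H^1(\Omega)$, applied to the sets $\widetilde\NN_\ell^\Omega$ from~\eqref{def:Ntilde}. For $\PPrec_\slp$, the energy space is $H^{-1/2}(\Gamma)$, in which no direct hierarchical decomposition is readily available. The key observation is Maue's formula~\eqref{eq:maueformula}, which shows that the Galerkin matrix of $\slp$ in the Haar basis $\{\chi_z^\ell\}$ equals the Galerkin matrix of the hypersingular operator $\hyp$ in the hat-function basis $\{\zeta_z^\ell\}$. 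Consequently, the $H^{1/2}(\Gamma)$-stable multilevel decomposition for the hypersingular problem derived in~\cite{ffps} transfers via (anti-)differentiation into an $H^{-1/2}(\Gamma)$-stable decomposition of $\YY^L$, modulo the one-dimensional space of constants, which is handled precisely by the rank-one term $\onemat D^{-1}\onemat^T$.

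The main obstacle is this transfer step for the BEM preconditioner. Maue's formula directly addresses only the mean-zero subspace of $\YY^L$ (the range of differentiation along $\Gamma$), whereas $\YY^L$ itself always contains the constants; one has to split $\phi_L \in \YY^L$ into a mean-zero discrete-derivative part plus its integral average, prove $H^{-1/2}(\Gamma)$-stability of both summands, and verify that the constant contribution is quantitatively equivalent to $D^{-1}$ times the squared average. All remaining ingredients—the Elman-style GMRES estimates, the stability of the Johnson-N\'ed\'elec form~\eqref{eq:jn:staboperator}, and the FEM multilevel decomposition—are essentially available off the shelf.
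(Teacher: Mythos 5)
Your proposal follows essentially the same route as the paper: the same Eisenstat--Elman--Schultz GMRES bound taken over from~\cite{ms98}, the reduction to level-independent spectral estimates via the equivalence $\norm{\UU}{\PPrec_\jn}^2 \simeq \norm{\uuu_L}{\HH}^2$ combined with the ellipticity and continuity constants $c_\jn, C_\jn$ of Lemma~\ref{lem:jn:stab}, the stable local subspace decomposition of~\cite{xch10} for $\PPrec_\fem$, and the Maue-formula transfer of the hypersingular decomposition from~\cite{ffps} for $\PPrec_\slp$ including the rank-one correction for the constants (Proposition~\ref{thm:slpLMLD}). The only cosmetic difference is that the paper bounds the constant $\beta$ by passing through the explicit discrete operator $\tmp_L = \jn_L^* \pform_L^{-1} \jn_L$ from Lemma~\ref{lemma:discretejn}, whereas your sketch would obtain the same constant directly from the dual-supremum characterization of $\norm{\cdot}{\PPrec_\jn}$ together with the continuity of $\jn$ and the norm equivalence applied to both arguments.
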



\section{Spectral estimates for $\AA_\fem$ and $\AA_\slp$}
\label{sec:spectral:estimates}

\noindent
In this section, we provide spectral estimates for the matrices $\AA_\fem$,
$\AA_\slp$.
In particular, the equivalences $\dual{\AA_\fem\xx}\xx_2 \simeq \dual{\PPrec_\fem
\xx}\xx_2$ for $\xx\in\R^{N_L}$ as well as $\dual{\AA_\slp\pphi}\pphi_2 \simeq
\dual{\PPrec_\slp \pphi}\pphi_2$ for $\pphi\in\R^{M_L}$ are optimal in the sense, that the
involved constants are independent of $L$ and $h_L$.

The remainder of this section is organized as follows:
In Section~\ref{sec:spectral:estimates:fem}, we focus on the optimality of
the preconditioner $\PPrec_\fem$, which follows from~\cite{wuchen06,xch10}.
In Section~\ref{sec:spectral:estimates:slp}, we analyze the preconditioner
$\PPrec_\slp$ and prove optimality thereof. Note that optimality of
$\PPrec_\slp$ for uniform meshes has already been proved in~\cite{transtep96},
where $\widetilde\NN_\ell^\Omega = \NN_\ell^\Omega$ and
$\widetilde\NN_\ell^\Gamma = \NN_\ell^\Gamma$,
while optimality on adaptive meshes is a particular contribution of
the present work.

\subsection{Optimality of the multilevel preconditioner $\PPrec_\fem$}
\label{sec:spectral:estimates:fem}
Define the local projection operators $\precfem_z^\ell : \XX^L\to \XX_z^\ell :=
\linhull\{\eta_z^\ell\}$ by 
\begin{align}
  \dual{\precfem_z^\ell v}{w_z^\ell}_\fem = \dual{v}{w_z^\ell}_{\fem} \quad\text{for all }
  w_z^\ell \in\XX_z^\ell,
\end{align}
and the multilevel additive Schwarz preconditioner
\begin{align}
  \PASfem := \sum_{\ell=0}^L \sum_{z\in\widetilde\NN_\ell} \precfem_z^\ell \,:\, \XX^L
  \to \XX^L.
\end{align}
A straightforward calculation shows the identity
\begin{align}
  \dual{\PPrec_\fem^{-1} \AA_\fem \xx}\xx_{\AA_\fem} = \dual{\PASfem v}v_\fem,
\end{align}
where $v\in\HH^L$ is given by $v = \sum_{j=1}^{N_L} (\xx)_j \eta_{z_j}^L$,
and $N_L := \# \NN_L^\Omega$ denotes the number of nodes in the FEM
domain.
Thus, bounds for the extremal eigenvalues of the operator $\PASfem$ provide bounds
for the extremal eigenvalues of the preconditioned system.

\begin{theorem}\label{thm:precond:fem}
  The preconditioner matrix $\PPrec_\fem$ is symmetric and positive definite.
  There holds
  \begin{align}\label{eq:specest:fem}
    d_\fem \dual{\PPrec_\fem\xx}\xx_2 \leq \dual{\AA_\fem\xx}\xx_2
    \leq D_\fem \dual{\PPrec_\fem\xx}\xx_2 \quad\text{for all }\xx\in\R^{N_L}.
  \end{align}
  The constants $d_\fem,D_\fem$ depend only on $\Omega$, the initial
  triangulation $\TT_0^\Omega$, as well as the use of newest vertex bisection for
  mesh-refinement.
\end{theorem}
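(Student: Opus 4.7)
The plan is to recast the spectral equivalence~\eqref{eq:specest:fem} as a bound on the extremal eigenvalues of the additive Schwarz operator $\PASfem$, and then to invoke the stable multilevel space decomposition of $\XX^L$ established in~\cite{xch10}.

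First, symmetry and positive definiteness of $\PPrec_\fem$: each summand $\widetilde\embmat^\ell (\widetilde\DD_\fem^\ell)^{-1} (\widetilde\embmat^\ell)^T$ in the definition~\eqref{eq:defPrecFEM} of $\PPrec_\fem^{-1}$ is symmetric and positive semidefinite, since $\widetilde\DD_\fem^\ell$ is a positive diagonal matrix (the diagonal entries $\dual{\fem\eta_z^\ell}{\eta_z^\ell}$ are strictly positive by ellipticity of $\fem$ from Lemma~\ref{lemma:A}). For $\ell=L$ we have $\widetilde\NN_L^\Omega\supseteq \NN_L^\Omega\backslash\NN_{L-1}^\Omega$, but together with the coarser levels one obtains a full basis of $\XX^L$, so $\PPrec_\fem^{-1}$ is in fact positive definite, and hence so is $\PPrec_\fem$.

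Next, as indicated in the text just before the theorem, a direct computation using the definition of the local projections $\precfem_z^\ell$ and~\eqref{eq:defPrecFEM} yields
\begin{align*}
  \dual{\PPrec_\fem^{-1}\AA_\fem\xx}{\xx}_{\AA_\fem} = \dual{\PASfem v}{v}_\fem
  \quad\text{with }v = \sum_{j=1}^{N_L}(\xx)_j \eta_{z_j}^L.
\end{align*}
Standard additive Schwarz theory (see, e.g.,~\cite{xch10}) then reduces~\eqref{eq:specest:fem} to proving that the extremal eigenvalues of $\PASfem$ on $\XX^L$ (with respect to the energy inner product $\dual{\fem\cdot}{\cdot}$) are uniformly bounded from above and bounded away from zero, independently of $L$ and of the mesh-size. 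Equivalently, one must show that the local multilevel decomposition of $\XX^L$ into the one-dimensional subspaces $\XX_z^\ell=\linhull\{\eta_z^\ell\}$ for $\ell=0,\dots,L$ and $z\in\widetilde\NN_\ell^\Omega$ is \emph{stable}, i.e.
\begin{align*}
  \norm{v}{\fem}^2 \simeq \inf\Bigg\{\sum_{\ell=0}^L\sum_{z\in\widetilde\NN_\ell^\Omega}\norm{v_z^\ell}{\fem}^2 \;:\; v = \sum_{\ell=0}^L\sum_{z\in\widetilde\NN_\ell^\Omega} v_z^\ell,\; v_z^\ell\in\XX_z^\ell\Bigg\}.
\end{align*}
The upper bound in this equivalence (the bounded overlap / strengthened Cauchy--Schwarz part) is the routine direction: for any admissible decomposition one uses that only finitely many patches overlap at a single point, combined with a level-wise geometric decay argument on newest vertex bisection meshes.

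The essential ingredient, and the main potential obstacle, is the lower bound. Here I would invoke the stable decomposition result of Xu, Chen and Nochetto~\cite{xch10}, which was established precisely for lowest-order $H^1$-conforming FEM on 2D newest-vertex-bisection meshes and for the localized level sets $\widetilde\NN_\ell^\Omega$ defined in~\eqref{def:Ntilde}. Their result gives, for every $v\in\XX^L$, an explicit decomposition with
\begin{align*}
  \sum_{\ell=0}^L \sum_{z\in\widetilde\NN_\ell^\Omega} \norm{v_z^\ell}{H^1(\Omega)}^2 \lesssim \norm{v}{H^1(\Omega)}^2,
\end{align*}
where the hidden constant depends only on $\TT_0^\Omega$ and the shape-regularity of the bisection refinement. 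Since Lemma~\ref{lemma:A} yields $\norm{\cdot}{\fem}^2 \simeq \norm{\cdot}{H^1(\Omega)}^2$ with constants depending only on $c_\material,C_\material$ and $\Omega$, the same decomposition is stable in the energy norm induced by $\fem$. Combining the two bounds yields the spectral equivalence~\eqref{eq:specest:fem} with constants $d_\fem,D_\fem$ depending only on the quantities listed in the statement.
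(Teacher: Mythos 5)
Your argument follows essentially the same route as the paper's: reduce the spectral equivalence to extremal eigenvalue bounds for the additive Schwarz operator $\PASfem$ via the identity $\dual{\PPrec_\fem^{-1}\AA_\fem\xx}\xx_{\AA_\fem}=\dual{\PASfem v}v_\fem$, obtain these bounds from the local multilevel stable decomposition of~\cite{xch10}, and transfer from the $H^1$-norm to the $\fem$-energy norm via the equivalence in Lemma~\ref{lemma:A} (the paper phrases this last step as verifying that the auxiliary lemmas from~\cite{wuchen06} ``also hold for the problem considered in this work''). Your sketch is correct and in fact more explicit than the paper's (which omits the SPD argument for $\PPrec_\fem$); the only slip is a naming one --- \cite{xch10} is not the Xu--Chen--Nochetto reference (that is~\cite{xcn09}), though the citation key you used is the right one.
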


\begin{proof}
There holds a similar result to~\cite[Theorem~4.2]{xch10} for the additive
Schwarz operator $\PASfem$, i.e., $\PASfem$ is $\dual\cdot\cdot_\fem$-symmetric and
there holds for all $v\in \XX^L$
\begin{align}\label{eq:equivalence:fem}
  \dual{\PASfem v}v_\fem \simeq \dual{\fem v}v
\end{align}
where the hidden constants depend only on $\Omega$, the initial triangulation
$\TT_0^\Omega$, as well as on the mesh-refinement
chosen.
In particular, the proof of the equivalence~\eqref{eq:equivalence:fem} follows the lines
of~\cite[Section~4.1]{xch10}.
The two key ingredients of the proof are the results~\cite[Lemma~3.2--3.3]{wuchen06},
which also hold for the problem considered in this work.
\end{proof}

\subsection{Optimality of the multilevel preconditioner $\PPrec_\slp$}
\label{sec:spectral:estimates:slp}
In this section we prove that the optimal additive Schwarz preconditioner
for the hypersingular integral operator provided in~\cite{ffps}, which is based
on a space decomposition of lowest-order hat-functions, induces optimality of
the additive Schwarz operator for the weakly-singular integral operator.
The key ingredient of the proof is Maue's
formula~\eqref{eq:maueformula}, which allows
us, roughly speaking, to change between the $H^{1/2}$ and $H^{-1/2}$ norms. 
For uniform meshes, a similar approach, which uses a generalised antiderivative
operator~\cite{hs96}, is considered in~\cite{transtep96}. 
The remainder of this section can be seen as an alternate proof of the
results from~\cite[Section~3]{transtep96} as well as an extension to locally
refined meshes.

\begin{theorem}\label{thm:precond:slp}
  The preconditioner matrix $\PPrec_\slp$ is symmetric and positive definite.
  There holds
  \begin{align}\label{eq:specest:slp}
    d_\slp \dual{\PPrec_\slp\pphi}\pphi_2 \leq \dual{\AA_\slp\pphi}\pphi_2
    \leq D_\slp \dual{\PPrec_\slp\pphi}\pphi_2 \quad\text{for all }\pphi\in\R^{M_L}.
  \end{align}
  The constants $d_\slp,D_\slp$ depend only on $\Gamma$, the initial
  triangulation $\TT_0^\Gamma$, and the chosen mesh-refinement.
  Moreover, the eigenvalues of the preconditioned matrix
    $\PPrec_\slp^{-1} \AA_\slp$ are bounded by
    \begin{align}
      d_\slp \leq \evmin(\PPrec_\slp^{-1} \AA_\slp) \leq \evmax(\PPrec_\slp^{-1}
      \AA_\slp) \leq D_\slp.
    \end{align}
\end{theorem}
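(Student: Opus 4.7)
The plan is to reformulate the preconditioned system $\PPrec_\slp^{-1}\AA_\slp$ as an additive Schwarz operator on $\YY^L$ and then to transfer its spectral analysis from the hypersingular case of~\cite{ffps} via Maue's formula~\eqref{eq:maueformula}. Symmetry and positive definiteness of $\PPrec_\slp$ is immediate from its explicit form~\eqref{eq:defPrecSLP}, since the union of the ranges of the summands spans $\R^{M_L}$; the eigenvalue bounds for $\PPrec_\slp^{-1}\AA_\slp$ then follow at once from~\eqref{eq:specest:slp} by the standard symmetric argument, so the substantive task is~\eqref{eq:specest:slp}.

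First, I would introduce
\[
  \PASslp := P_0 + \sum_{\ell=0}^L \sum_{z \in \widetilde\NN_\ell^\Gamma} P_z^\ell,
\]
where $P_0$ and $P_z^\ell$ denote the $\slp$-Galerkin projections of $\YY^L$ onto $\linhull\{1\}$ and $\linhull\{\chi_z^\ell\}$, respectively. As in the proof of Theorem~\ref{thm:precond:fem}, the additive-Schwarz identification gives
\[
  \dual{\PPrec_\slp^{-1}\AA_\slp \pphi}{\pphi}_{\AA_\slp} = \dual{\slp \PASslp \phi}{\phi}_\Gamma \quad\text{for } \phi = \sum_j (\pphi)_j \psi_{T_j},
\]
so~\eqref{eq:specest:slp} is equivalent to the operator-level estimate $\dual{\slp \PASslp \phi}{\phi}_\Gamma \simeq \dual{\slp \phi}{\phi}_\Gamma$, uniformly in $L$.

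Second, I would exploit that on the closed curve $\Gamma$ every $\phi \in \YY^L$ splits uniquely as $\phi = c\cdot 1 + \phi_0$ with mean-zero part $\phi_0 \in \YY^L_0 := \{\psi \in \YY^L : \dual{\psi}{1}_\Gamma = 0\}$, and that the mean-zero antiderivative defines a bijection $\YY^L_0 \to \ZZ^L_0 := \{w \in \ZZ^L : \dual{w}{1}_\Gamma = 0\}$ which sends $\chi_z^\ell$ to the mean-zero part of $\zeta_z^\ell$. Consequently, the Haar basis of $\widetilde\YY^\ell$ and the hat-function basis of $\widetilde\ZZ^\ell$ correspond exactly, and Maue's formula~\eqref{eq:maueformula} yields both the patch-wise identity $\dual{\slp \chi_{z'}^\ell}{\chi_z^\ell}_\Gamma = \dual{\hyp \zeta_{z'}^\ell}{\zeta_z^\ell}_\Gamma$ and the global isometry $\dual{\slp \phi_0}{\phi_0}_\Gamma = \dual{\hyp v}{v}_\Gamma$ between $\YY^L_0$ and $\ZZ^L_0$. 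Thus the diagonal blocks $\widetilde\DD_\hyp^\ell$ appearing in~\eqref{eq:defPrecSLP} coincide \emph{verbatim} with those used in the hypersingular multilevel preconditioner of~\cite{ffps}, and on the mean-zero subspace $\PASslp$ is the direct transfer of the corresponding additive Schwarz operator $\PAShyp$ from~\cite{ffps}.

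Third, I would conclude by verifying the two standard additive-Schwarz axioms. For the upper estimate, take the optimal $H^{1/2}$-stable multilevel decomposition of $v$ provided by~\cite{ffps}, differentiate it term by term into a decomposition of $\phi_0$ in Haar functions with identical coefficients, and append the constant component $c\cdot 1$; Maue's formula then gives matching energies level by level. The lower estimate (strengthened Cauchy--Schwarz / finite overlap) transfers analogously patch by patch. The main obstacle will be the non-orthogonality of the splitting $\YY^L = \linhull\{1\} \oplus \YY^L_0$ with respect to $\dual{\slp\cdot}\cdot_\Gamma$, which in general prevents a clean block-decoupling since $\slp 1$ need not be constant. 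I would handle this via a Schur-complement argument, exploiting that $\linhull\{1\}$ is one-dimensional with $D = \dual{1}{\slp 1}_\Gamma \simeq \norm{1}{H^{-1/2}(\Gamma)}^2$, combined with the standard equivalence $\norm{\phi}{H^{-1/2}(\Gamma)}^2 \simeq |c|^2 + \norm{v}{H^{1/2}(\Gamma)}^2$ on closed curves together with the $\slp$-ellipticity~\eqref{eq:slp:ellipticity} and the $\hyp$-semi-ellipticity~\eqref{eq:hyp:semielliptic}. The resulting constants $d_\slp, D_\slp$ inherit their dependencies from~\cite{ffps} and from $\Gamma$, $\TT_0^\Gamma$, and the refinement rule, exactly as stated.
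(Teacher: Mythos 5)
Your plan matches the paper's proof almost exactly: identify $\PPrec_\slp^{-1}\AA_\slp$ with the additive Schwarz operator $\PASslp$, split $\phi$ into its mean and mean-zero parts, use the arclength antiderivative to map the Haar basis onto the hat basis, and transfer the eigenvalue bounds from the hypersingular result of~\cite{ffps} through Maue's formula — this is precisely Proposition~\ref{thm:slpLMLD} combined with the matrix identification in the proof of Theorem~\ref{thm:precond:slp}. The one place you introduce genuinely extra machinery is the treatment of the constant subspace: you flag the $\slp$-non-orthogonality of $\linhull\{1\}\oplus\YY_*^L$ as ``the main obstacle'' and propose a Schur-complement argument. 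The paper avoids this entirely. It only needs the one-sided estimate $\norm{\phi_0}\slp\lesssim\norm\phi\slp$, which is immediate from continuity of $\phi\mapsto\dual{\phi}{1}_\Gamma$ on $H^{-1/2}(\Gamma)$, and then closes both bounds with a plain triangle inequality $\norm\phi\slp^2\lesssim\norm{\phi_0}\slp^2+\norm{\widetilde\phi}\slp^2$ (and its converse). No Schur complement is required because Lions' lemma and its dual only ask for a \emph{stable} decomposition, not an orthogonal one; the lost constant is harmless. Beyond that, note that your labels are swapped relative to the standard convention: the stable decomposition pulled back from $\ZZ^L$ via $\evmin(\PAShyp)$ gives the \emph{lower} eigenvalue bound $d_\slp$ in~\eqref{eq:specest:slp}, while transferring $\evmax(\PAShyp)$ (what you call ``strengthened Cauchy--Schwarz / finite overlap'') gives the \emph{upper} bound $D_\slp$. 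The underlying mathematics is right either way.
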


According to~\eqref{def:slp},
and~\eqref{eq:bio:symmetry}--\eqref{eq:slp:ellipticity}, $\dual{\phi}\psi_\slp
:= \dual{\slp\phi}\psi_\Gamma$ defines a scalar product with equivalent norm
$\norm\phi\slp^2 := \dual\phi\phi_\slp$ on $H^{-1/2}(\Gamma)$.
According
to~\eqref{def:hyp},~\eqref{eq:bio:symmetry},~\eqref{eq:hyp:semielliptic}, and
the Rellich compactness theorem, $\dual{u}v_\hyp := \dual{\hyp u}v_\Gamma +
\dual{u}1_\Gamma\dual{v}1_\Gamma$ defines a scalar product with equivalent norm
$\norm{u}\hyp^2 := \dual{u}u_\hyp$ on $H^{1/2}(\Gamma)$.

We need to define a subspace decomposition of the space $\YY^L$.
To that end, we make use of the Haar basis functions $\chi_z^\ell =
(\zeta_z^\ell)'$, which are the arclength
derivatives of the hat functions $\zeta_z^\ell$.
Then, $\ZZ^L = \linhull\set{\zeta_z^L}{z\in\NN^L}$ can be decomposed into
\begin{align*}
  \ZZ^L = \sum_{\ell=0}^L \sum_{z\in\widetilde\NN_\ell^\Gamma} \ZZ_z^\ell
  \quad\text{with } \ZZ_z^\ell := \linhull\{\zeta_z^\ell\}.
\end{align*}
Moreover, simple calculations with $\dual{\chi_z^\ell}1_\Gamma=0$ show that
\begin{align*}
  \YY^L = \YY^{00} \oplus \sum_{\ell=0}^L \sum_{z\in\widetilde\NN_\ell^\Gamma} \YY_z^\ell
  \quad\text{with } \YY^{00} := \linhull\{1\} \text{ and } \YY_z^\ell := \linhull\{\chi_z^\ell\}
\end{align*}
defines a (direct sum) decomposition of $\YY^L$ into $\YY^{00}$ and an additive
Schwarz space.
With this,
we define the additive Schwarz operator
\begin{align}\label{def:slp:as}
  \PASslp = \precslp^{00} + \sum_{\ell=0}^L \sum_{z\in\widetilde\NN_\ell^\Gamma} \precslp_z^\ell,
\end{align}
where $\precslp^{00}$ resp. $\precslp_z^\ell$ are defined for all $\phi\in\YY^L$ via
\begin{subequations}\label{def:slp:proj}
\begin{align}  
  \dual{\precslp^{00} \phi}{\phi_0}_\slp &= \dual\phi{\phi_0}_\slp
  \quad\text{for all } \phi_0 \in\YY^{00}
  \quad\text{resp.} \\
  \dual{\precslp_z^\ell \phi}{\phi_z^\ell}_\slp &= \dual\phi{\phi_z^\ell}_\slp \quad\text{for all } 
  \phi_z^\ell \in\YY_z^\ell.
\end{align}
\end{subequations}
We note that the symmetry of the orthogonal projectors $\precslp^{00}$
resp. $\precslp_z^\ell$ implies that also $\PASslp$ is symmetric
\begin{align}\label{eq:PASslp:sym}
  \dual{\PASslp \phi}\psi_\slp = \dual\phi{\PASslp\phi}_\slp \quad\text{for all } 
  \phi,\psi \in\YY^L.
\end{align}
Our analysis of $\PASslp$ builds on own results~\cite{ffps} on the additive
Schwarz operator associated to the hypersingular integral equation,
\begin{align}
  \PAShyp = \sum_{\ell=0}^L \sum_{z\in\widetilde\NN_\ell^\Gamma}
  \prechyp_z^\ell, \quad\text{where }   \dual{\prechyp_z^\ell v}{v_z^\ell}_\hyp = \dual{v}{v_z^\ell}_\hyp \quad\text{for
  all } v_z^\ell \in\XX_z^\ell.
\end{align}
The analysis of~\cite{ffps} provides the following result.
\begin{lemma}[{\cite[Proposition~4 ]{ffps}}]\label{lem:hypAS}
  The operator $\PAShyp : \ZZ^L\to\ZZ^L$ is symmetric and satisfies
  \begin{align*}
    \c{hyp:lb} \norm{v}\hyp^2 \leq \dual{\PAShyp v}v_\hyp \leq \c{hyp:ub}
    \norm{v}\hyp^2 \quad\text{for all } v\in\ZZ^L,
  \end{align*}
  where the constants $\setc{hyp:lb},\setc{hyp:ub}>0$ depend only on $\Gamma$,
  the initial triangulation $\TT_0^\Gamma$, as well as on the chosen
  mesh-refinement.
  \qed
\end{lemma}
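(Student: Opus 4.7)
The target is a standard additive Schwarz spectral equivalence on the one-dimensional subspace decomposition $\ZZ^L=\sum_{\ell=0}^L\sum_{z\in\widetilde\NN_\ell^\Gamma}\linhull\{\zeta_z^\ell\}$, so I would follow the classical Lions / Xu--Zikatanov recipe. Symmetry of $\PAShyp$ is immediate because each $\prechyp_z^\ell$ is an $\dual{\cdot}{\cdot}_\hyp$-orthogonal projection. By additive Schwarz theory, the two-sided estimate $\dual{\PAShyp v}v_\hyp\simeq\norm{v}\hyp^2$ on $\ZZ^L$ is equivalent to the conjunction of (a) a \emph{stable decomposition}: every $v\in\ZZ^L$ can be written $v=\sum_{\ell,z}v_z^\ell$ with $v_z^\ell\in\linhull\{\zeta_z^\ell\}$ and $\sum_{\ell,z}\norm{v_z^\ell}\hyp^2\lesssim\norm{v}\hyp^2$ (yielding the lower bound $\c{hyp:lb}\,\norm{v}\hyp^2\leq\dual{\PAShyp v}v_\hyp$), together with (b) a \emph{strengthened Cauchy--Schwarz}: $\norm{\sum_{\ell,z}v_z^\ell}\hyp^2\lesssim\sum_{\ell,z}\norm{v_z^\ell}\hyp^2$ for every such decomposition (giving the upper bound).

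For the strengthened Cauchy--Schwarz in (b), I would combine two standard ingredients. Within a fixed level $\ell$, the $\gamma$-shape regularity of $\TT_\ell^\Gamma$ guarantees that only a uniformly bounded number of supports of $\{\zeta_z^\ell\}_{z\in\widetilde\NN_\ell^\Gamma}$ overlap at any boundary point; combined with an inverse estimate exchanging the non-local $\hyp$-norm for a suitably scaled local $L^2$-norm, this gives a level-wise $\ell^2$-estimate. Across levels, the standard strengthened Cauchy--Schwarz $|\dual{w_\ell}{w_k}_\hyp|\lesssim 2^{-|k-\ell|/2}\norm{w_\ell}\hyp\norm{w_k}\hyp$ (with the factor $2^{-1/2}$ reflecting the arclength halving under each bisection on $\Gamma$) lets me sum the resulting double series as a geometric series, collapsed back to $\sum_\ell\norm{w_\ell}\hyp^2$.

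The delicate half is (a), and here I would construct the decomposition via quasi-interpolation. Fix an $H^{1/2}(\Gamma)$-stable Scott--Zhang-type operator $I_\ell:\ZZ^L\to\ZZ^\ell$, set $w_\ell:=I_\ell v-I_{\ell-1}v$ with $I_{-1}:=0$, and use the telescoping identity $v=\sum_{\ell=0}^L w_\ell$. The crucial structural observation --- and exactly what motivates the local node set $\widetilde\NN_\ell^\Gamma$ from~\eqref{def:Ntilde} --- is that $w_\ell$ is piecewise affine on $\TT_\ell^\Gamma$ and vanishes at every old node $z\in\NN_{\ell-1}^\Gamma$ whose patch did \emph{not} change during the refinement step. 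Hence its nodal expansion is supported exactly on $\widetilde\NN_\ell^\Gamma$, giving $w_\ell=\sum_{z\in\widetilde\NN_\ell^\Gamma}w_\ell(z)\,\zeta_z^\ell$ and the desired splitting $v_z^\ell:=w_\ell(z)\,\zeta_z^\ell$.

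The main technical obstacle is then to verify $\sum_{\ell=0}^L\sum_{z\in\widetilde\NN_\ell^\Gamma}\norm{w_\ell(z)\,\zeta_z^\ell}\hyp^2\lesssim\norm{v}\hyp^2$ in the non-local fractional norm. I would reduce each summand via a local norm equivalence --- $\norm{w_\ell(z)\,\zeta_z^\ell}\hyp^2$ comparable to a suitably weighted local $L^2$-contribution of $w_\ell$ on the patch of $z$ --- and then invoke $H^{1/2}(\Gamma)$-stability of $I_\ell$ together with an Oswald/BPX-type identity on bisection meshes to telescope the total sum back to $\norm{v}\hyp^2$. The restriction to $\widetilde\NN_\ell^\Gamma$ rather than all of $\NN_\ell^\Gamma$ is essential: summing over untouched nodes between successive refinements would inject $\OO(L)$-many redundant contributions and destroy level-independence of the constants. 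This is precisely the argument executed in Proposition~4 of \cite{ffps}, adapting the local multilevel stability technology of~\cite{wuchen06} on newest-vertex-bisection meshes to the fractional space $H^{1/2}(\Gamma)$.
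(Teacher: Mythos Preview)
The paper does not supply a proof of this lemma at all: it is stated with a terminal \qed{} and attributed to \cite[Proposition~4]{ffps}. Your sketch is a faithful outline of the standard additive Schwarz argument that the cited reference carries out---Lions' lemma for the lower bound via a Scott--Zhang telescoping decomposition localized to $\widetilde\NN_\ell^\Gamma$, and a strengthened Cauchy--Schwarz with geometric inter-level decay for the upper bound---so there is nothing to compare against here beyond noting that you have reconstructed, at the level of a roadmap, precisely the content the paper outsources.
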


For each $\phi\in\YY^L$, we follow~\cite{transtep96} and split
\begin{align}\label{eq:splitPhi}
  \phi = \phi_0 + \widetilde\phi \quad\text{with unique } \phi_0 =
  \dual{\phi}1_\Gamma/|\Gamma| \in\YY^{00}\,
  \text{and } 
  \widetilde\phi\in \sum_{\ell=0}^L \sum_{z\in\widetilde\NN_\ell^\Gamma} \YY_z^\ell.
\end{align}
Let us introduce a mechanism to switch between the $H^{1/2}$ and $H^{-1/2}$
norms. For $\widetilde\phi\in \YY_*^L := \{ \phi \in\YY^L \,:\,\dual{\phi}1_\Gamma = 0\}$,
there exists a unique function $\widetilde v \in\ZZ_*^L := \{ v\in\ZZ^L \,:\, \dual{v}1_\Gamma = 0 \}$ such that 
\begin{align}
  \widetilde\phi = (\widetilde v)'.
\end{align}
To see this, let $\widetilde\phi \in \YY_*^L$ with
\begin{align}
  \widetilde\phi = \sum_{\ell=0}^L \sum_{z\in\widetilde\NN_\ell^\Gamma} \alpha_z^\ell
  \chi_z^\ell \quad\text{and define}\quad v:= \sum_{\ell=0}^L
  \sum_{z\in\widetilde\NN_\ell^\Gamma} \alpha_z^\ell \zeta_z^\ell \in\ZZ^L.
\end{align}
Then, $\widetilde v := v- \dual{v}1_\Gamma/|\Gamma| \in \ZZ_*^L$, and
it holds $(\widetilde v)' = v' = \widetilde\phi$ as $\chi_z^\ell =
(\zeta_z^\ell)'$.
Maue's formula~\eqref{eq:maueformula} provides the important
identities 
\begin{align}\label{eq:nedelec}
  \dual{\hyp\zeta_z^\ell}{\zeta_z^\ell}_\Gamma =
  \dual{\slp\chi_z^\ell}{\chi_z^\ell}_\Gamma \quad\text{as well as}\quad
  \dual{\hyp\widetilde v}{\widetilde v}_\Gamma =
  \dual{\slp\widetilde\phi}{\widetilde\phi}_\Gamma.
\end{align}
We stress that~\eqref{eq:nedelec} allows to switch between the spaces
$H^{1/2}$ and $H^{-1/2}$. This is the heart of the proof of the following
proposition.
\begin{proposition}\label{thm:slpLMLD}
The operator $\PASslp : \YY^L\to\YY^L$ is
symmetric~\eqref{eq:PASslp:sym}, and it holds
\begin{align}\label{eq:slpLMLD}
  \c{slp_lb} \norm\psi\slp^2 \leq \dual{\PASslp \psi}\psi_\slp \leq\c{slp_ub}
  \norm\psi\slp^2 \quad\text{for all }\psi \in \YY^L.
\end{align}
The constants $\setc{slp_lb},\setc{slp_ub}>0$ depend only on $\Gamma$, the
initial triangulation $\TT_0^\Gamma$, as well as on the bisection algorithm from
Section~\ref{sec:refinement}.
\end{proposition}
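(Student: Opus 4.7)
The plan is to import the hypersingular estimate from Lemma~\ref{lem:hypAS} via Maue's formula \eqref{eq:maueformula} and the antiderivative bijection $\ZZ_*^L \ni v \mapsto v' \in \YY_*^L$, which by \eqref{eq:nedelec} is a $\|\cdot\|_\hyp$-to-$\|\cdot\|_\slp$ isometry. Symmetry \eqref{eq:PASslp:sym} is immediate because each summand $\precslp^{00},\precslp_z^\ell$ is an $\slp$-orthogonal projection. The starting point is the classical additive Schwarz identity
\[
\dual{\PASslp\phi}\phi_\slp = \inf\Big\{\|\phi_0\|_\slp^2 + \sum_{\ell,z}\|\alpha_z^\ell\chi_z^\ell\|_\slp^2 \,:\, \phi = \phi_0 + \sum_{\ell,z}\alpha_z^\ell\chi_z^\ell,\ \phi_0\in\YY^{00}\Big\},
\]
together with its hypersingular analogue. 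Because every $\chi_z^\ell$ has zero mean, the constant part $\phi_0 = \dual\phi1_\Gamma/|\Gamma|$ in the splitting \eqref{eq:splitPhi} is rigidly fixed, so the infimum effectively runs over decompositions of $\widetilde\phi$ alone.

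To obtain the upper bound in \eqref{eq:slpLMLD}, I would push a near-optimal hypersingular decomposition $\widetilde v = \sum\alpha_z^\ell\zeta_z^\ell$ of $\widetilde v$, supplied by Lemma~\ref{lem:hypAS}, forward to $\widetilde\phi = \widetilde v' = \sum\alpha_z^\ell\chi_z^\ell$. Maue gives $\|\alpha_z^\ell\chi_z^\ell\|_\slp^2 = (\alpha_z^\ell)^2\dual{\hyp\zeta_z^\ell}{\zeta_z^\ell}_\Gamma \le \|\alpha_z^\ell\zeta_z^\ell\|_\hyp^2$; summing, then applying Lemma~\ref{lem:hypAS} and using $\dual{\widetilde v}1_\Gamma = 0$ plus Maue once more yields
\[
\sum_{\ell,z}\|\alpha_z^\ell\chi_z^\ell\|_\slp^2 \le \c{hyp:ub}\|\widetilde v\|_\hyp^2 = \c{hyp:ub}\dual{\hyp\widetilde v}{\widetilde v}_\Gamma = \c{hyp:ub}\|\widetilde\phi\|_\slp^2.
\]
Combined with the elementary boundedness $\|\phi_0\|_\slp + \|\widetilde\phi\|_\slp \lesssim \|\phi\|_\slp$, consequence of $|\dual\phi1_\Gamma| \le \|1\|_{H^{1/2}}\|\phi\|_{H^{-1/2}}$, this closes the upper estimate.

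For the lower bound I would, conversely, fix any admissible decomposition $\widetilde\phi = \sum\alpha_z^\ell\chi_z^\ell$, define $v := \sum\alpha_z^\ell\zeta_z^\ell \in \ZZ^L$ (so $v' = \widetilde\phi$), and chain Maue's formula with the infimum characterisation for $\PAShyp$ applied to this specific decomposition of $v$:
\[
\|\widetilde\phi\|_\slp^2 = \dual{\hyp v}v_\Gamma \le \|v\|_\hyp^2 \le \c{hyp:lb}^{-1}\sum_{\ell,z}(\alpha_z^\ell)^2\|\zeta_z^\ell\|_\hyp^2.
\]
Taking the infimum over decompositions and adding $\|\phi_0\|_\slp^2$ would then yield the lower estimate through $\|\phi\|_\slp^2 \le 2\|\phi_0\|_\slp^2 + 2\|\widetilde\phi\|_\slp^2$, provided one has the uniform single-hat equivalence $\|\zeta_z^\ell\|_\hyp^2 \simeq \dual{\hyp\zeta_z^\ell}{\zeta_z^\ell}_\Gamma = \|\chi_z^\ell\|_\slp^2$.

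This single-hat equivalence is the only genuinely technical ingredient and constitutes the main obstacle of the proof: one must dominate the $L^2$-mass contribution $\dual{\zeta_z^\ell}1_\Gamma^2$ appearing in $\|\zeta_z^\ell\|_\hyp^2$ by $\dual{\hyp\zeta_z^\ell}{\zeta_z^\ell}_\Gamma$ with a constant independent of $\ell$ and $z$. In the present 1D setting this follows from the scale invariance of the $H^{1/2}(\mathbb{R})$-seminorm of a piecewise linear hat (giving $\dual{\hyp\zeta_z^\ell}{\zeta_z^\ell}_\Gamma \simeq 1$ uniformly) against the trivial bound $\dual{\zeta_z^\ell}1_\Gamma \le \mathrm{diam}(\mathrm{supp}\,\zeta_z^\ell) \le |\Gamma|$. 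Once this equivalence is in place, the remaining steps reduce to bookkeeping with the infimum characterisation of additive Schwarz operators and transport between $\slp$- and $\hyp$-energies via Maue.
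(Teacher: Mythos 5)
Your plan is in essence the paper's proof, but it rests on a misstated identity that swaps the logical roles of the two inequalities. You write ``the classical additive Schwarz identity'' as
\[
  \dual{\PASslp\phi}\phi_\slp = \inf\Big\{\|\phi_0\|_\slp^2 + \sum_{\ell,z}\|\alpha_z^\ell\chi_z^\ell\|_\slp^2 : \ldots\Big\},
\]
but the correct abstract result (Lions'/Zhang's lemma, cf.\ Lemma~\ref{lem:stabledec} and~\cite[Lemma~3.1]{zhang92}) identifies this infimum with $\dual{\PASslp^{-1}\phi}\phi_\slp$, not with $\dual{\PASslp\phi}\phi_\slp$. Because of this, your two arguments are attached to the wrong bounds: exhibiting one good decomposition with a small sum of squared norms yields the \emph{lower} bound in~\eqref{eq:slpLMLD} (this is exactly Lemma~\ref{lem:stabledec}(i), which the paper uses for its lower bound), whereas showing that every admissible decomposition has a sum dominating $\|\phi\|_\slp^2$ yields the \emph{upper} bound (Lemma~\ref{lem:stabledec}(ii), the paper's upper-bound argument). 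Consistently with the misstated identity, you also invoke $\c{hyp:ub}$ where $\evmin(\PAShyp)\ge\c{hyp:lb}$ (hence $\c{hyp:lb}^{-1}$) is what actually furnishes the near-optimal hypersingular decomposition via~\eqref{eq:proj:evmin}, and $\c{hyp:lb}^{-1}$ where $\evmax(\PAShyp)\le\c{hyp:ub}$ via~\eqref{eq:proj:evmax} is what bounds an arbitrary decomposition.

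Apart from this sign/label error everything else lines up with the paper: the split $\phi=\phi_0+\widetilde\phi$ from~\eqref{eq:splitPhi}, the antiderivative map $\ZZ_*^L\to\YY_*^L$, Maue's formula~\eqref{eq:nedelec} to convert $\slp$-energies into $\hyp$-energies, and the bound $\|\phi_0\|_\slp\lesssim\|\phi\|_\slp$ all appear in the paper's proof. You also correctly isolate the single-hat equivalence $\|\zeta_z^\ell\|_\hyp^2\simeq\dual{\hyp\zeta_z^\ell}{\zeta_z^\ell}_\Gamma$ as the remaining technical step; the paper phrases it as $\norm{v_z^\ell}\hyp^2\simeq\norm{v_z^\ell}{\widetilde H^{1/2}(\omega_\ell(z))}^2\simeq\dual{\hyp v_z^\ell}{v_z^\ell}_\Gamma$ in its upper-bound proof, and your scaling heuristic for it is essentially sound. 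So the mathematical content is recoverable, but as written the proposal would mislead: if you start from $\dual{\PASslp\phi}\phi_\slp=\inf\{\cdots\}$ and follow the logic faithfully, you end up proving the opposite inequality to the one you claim at each step. Replace the identity by $\dual{\PASslp^{-1}\phi}\phi_\slp=\inf\{\cdots\}$ (or use Lemma~\ref{lem:stabledec} directly), swap the labels ``upper'' and ``lower'' on your two paragraphs, and swap $\c{hyp:ub}\leftrightarrow\c{hyp:lb}^{-1}$; then the argument matches the paper's.
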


For the proof of Proposition~\ref{thm:slpLMLD} we need the following result, see
e.g.~\cite{zhang92}, where the first part is known as Lions' lemma.

\begin{lemma}\label{lem:stabledec}
  (i)\, Let $c>0$ and $\psi\in\YY^L$. Suppose that there exists a decomposition $\psi
  = \psi_0 + \sum_{\ell=0}^L \sum_{z\in\widetilde\NN_\ell^\Gamma} \psi_z^\ell$ with $\psi_0
  \in\YY^{00}, \psi_z^\ell\in\YY_z^\ell$ such that
  \begin{align}
    \norm{\psi_0}\slp^2 + \sum_{\ell=0}^L\sum_{z\in\widetilde\NN_\ell^\Gamma}
    \norm{\psi_z^\ell}\slp^2 \leq c^{-1} \norm\psi\slp^2.
  \end{align}
  Then it follows, $c \norm\psi\slp^2 \leq \dual{\PASslp\psi}\psi_\slp$.

  (ii)\, Let $C>0$ and $\psi\in\YY^L$. Suppose that for all decompositions $\psi =
  \psi_0 + \sum_{\ell=0}^L \sum_{z\in\widetilde\NN_\ell^\Gamma} \psi_z^\ell$ with $\psi_0
  \in\YY^{00}$ and $\psi_z^\ell\in\YY_z^\ell$ holds
  \begin{align}
    \norm\psi\slp^2 \leq C \big( \norm{\psi_0}\slp^2 + \sum_{\ell=0}^L
    \sum_{z\in\widetilde\NN_\ell^\Gamma}  \norm{\psi_z^\ell}\slp^2 \big).
  \end{align}
  Then, it follows $\dual{\PASslp\psi}\psi_\slp \leq C \norm{\psi}\slp^2$.
\end{lemma}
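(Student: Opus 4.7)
The plan is a standard Lions' lemma argument, relying entirely on two elementary observations about the $\slp$-orthogonal projectors $\precslp^{00}$ and $\precslp_z^\ell$ from~\eqref{def:slp:proj}. First, for any $\phi_0 \in \YY^{00}$ and $\phi_z^\ell \in \YY_z^\ell$, the projector identities give $\dual{\psi}{\phi_0}_\slp = \dual{\precslp^{00}\psi}{\phi_0}_\slp$ and $\dual{\psi}{\phi_z^\ell}_\slp = \dual{\precslp_z^\ell\psi}{\phi_z^\ell}_\slp$. Second, idempotency and $\slp$-self-adjointness of each projector yield the identity
$$ \dual{\PASslp\psi}{\psi}_\slp = \norm{\precslp^{00}\psi}\slp^2 + \sum_{\ell=0}^L \sum_{z\in\widetilde\NN_\ell^\Gamma} \norm{\precslp_z^\ell\psi}\slp^2, $$
which will be used repeatedly.

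For part (i), the plan is to start from the given decomposition $\psi = \psi_0 + \sum_{\ell=0}^L \sum_{z\in\widetilde\NN_\ell^\Gamma} \psi_z^\ell$. Writing $\norm{\psi}\slp^2 = \dual{\psi}{\psi_0}_\slp + \sum_{\ell,z} \dual{\psi}{\psi_z^\ell}_\slp$, inserting the projectors via the first identity, and applying the discrete Cauchy--Schwarz inequality produces
$$ \norm{\psi}\slp^2 \leq \dual{\PASslp\psi}{\psi}_\slp^{1/2} \cdot \Big(\norm{\psi_0}\slp^2 + \sum_{\ell=0}^L \sum_{z\in\widetilde\NN_\ell^\Gamma} \norm{\psi_z^\ell}\slp^2\Big)^{1/2}. $$
Squaring and plugging in the hypothesis rearranges directly to the claimed lower bound $c\norm{\psi}\slp^2 \leq \dual{\PASslp\psi}{\psi}_\slp$.

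For part (ii), the trick will be to apply the hypothesis not to a decomposition of $\psi$, but to the canonical decomposition of $\PASslp\psi$: set $\widehat\psi_0 := \precslp^{00}\psi \in \YY^{00}$ and $\widehat\psi_z^\ell := \precslp_z^\ell\psi \in \YY_z^\ell$, so that $\PASslp\psi = \widehat\psi_0 + \sum_{\ell,z} \widehat\psi_z^\ell$. Invoking the hypothesis (with $\PASslp\psi$ in place of $\psi$) together with the identity above yields $\norm{\PASslp\psi}\slp^2 \leq C \dual{\PASslp\psi}{\psi}_\slp$. A single Cauchy--Schwarz inequality $\dual{\PASslp\psi}{\psi}_\slp \leq \norm{\PASslp\psi}\slp \cdot \norm{\psi}\slp$ combined with this bound delivers $\norm{\PASslp\psi}\slp \leq C\norm{\psi}\slp$, and feeding this back into Cauchy--Schwarz produces the desired upper bound $\dual{\PASslp\psi}{\psi}_\slp \leq C\norm{\psi}\slp^2$.

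The argument is purely algebraic, so no serious obstacle is expected. The one subtle point is that in part (ii) the hypothesis is applied to $\PASslp\psi$ rather than to $\psi$ itself; this is legitimate under the standard reading of Lions' lemma in which the hypothesis is posited for every element of $\YY^L$, and it is in this form that the lemma is used in the subsequent proof of Proposition~\ref{thm:slpLMLD}.
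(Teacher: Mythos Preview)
The paper does not actually prove this lemma; it simply attributes it to the additive Schwarz literature via the reference~\cite{zhang92}. Your argument is precisely the standard proof of Lions' lemma and is correct, so there is nothing substantive to compare.

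Your caveat about part~(ii) deserves emphasis rather than apology: taken literally for a \emph{single fixed} $\psi$, the implication in (ii) is in fact false. A two-dimensional counterexample with one-dimensional subspaces $V_1=\linhull\{e_1\}$ and $V_2=\linhull\{e_1+\epsilon e_2\}$ and $\psi=e_1$ gives a unique decomposition with $\norm{\psi_1}^2+\norm{\psi_2}^2=\norm{\psi}^2$ (so $C=1$ suffices in the hypothesis), yet $\dual{(P_1+P_2)\psi}{\psi}=1+\tfrac{1}{1+\epsilon^2}>1$. Hence the hypothesis must be read as holding for \emph{every} $\psi\in\YY^L$; this is exactly how the paper applies the lemma in the proof of Proposition~\ref{thm:slpLMLD} (the bound~\eqref{eq:slpLMLDup5} is derived for arbitrary $\phi$ and arbitrary decomposition), and it is the reading under which your proof of (ii) goes through.
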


\begin{proof}[Proof of Proposition~\ref{thm:slpLMLD}, lower bound in~\eqref{eq:slpLMLD}]
By means of Lemma~\ref{lem:stabledec}, we have to provide a stable subspace
decomposition. For $\phi\in\YY^L$, we consider the unique decomposition
$\phi = \phi_0 + \widetilde\phi$ from~\eqref{eq:splitPhi}.
With $\phi_0 = \dual{\phi}1_\Gamma/|\Gamma|$, we infer
\begin{align}\label{eq:slpLMLD2}
  \norm{\phi_0}\slp = \frac{\dual{\phi}1_\Gamma}{|\Gamma|}
  \norm{1}\slp \leq \frac{\norm{\phi}{H^{-1/2}(\Gamma)}
  \norm{1}{H^{1/2}(\Gamma)}}{|\Gamma|}
  \norm{1}{\slp} \lesssim \norm\phi\slp.
\end{align}
Moreover, there
exists $\widetilde v \in\ZZ_*^L$, such that $(\widetilde v)' =
\widetilde\phi$.
The abstract result~\cite[Lemma~3.1]{zhang92} states
\begin{align}\label{eq:proj:evmin}
  \evmin(\PAShyp) = \min\limits_{v\in\ZZ^L} 
  \frac{\norm{v}\hyp^2}{\min\limits_{\sum_{\ell=0}^L\sum_{z\in\widetilde\NN_\ell^\Gamma} v_z^\ell =
  v} \sum_{\ell=0}^L \sum_{z\in\widetilde\NN_\ell^\Gamma}
  \norm{v_z^\ell}\hyp^2},
\end{align}
since $\PAShyp$ is a finite sum of symmetric projectors. 
Lemma~\ref{lem:hypAS} provides uniform boundedness of the Rayleigh quotient
$\dual{\PAShyp v}v_\hyp / \norm{v}\hyp^2 \geq \c{hyp:lb}> 0$. Thus,
$\evmin(\PAShyp)\geq \c{hyp:lb}>0$ is uniformly
bounded, and we infer from~\eqref{eq:proj:evmin} the existence of a decomposition
$\widetilde v = \sum_{\ell=0}^L \sum_{z\in\widetilde\NN_\ell^\Gamma} v_z^\ell$
with $v_z^\ell \in\ZZ_z^\ell$ such that
\begin{align}\label{eq:slpLMLD3}
  \sum_{\ell=0}^L \sum_{z\in\widetilde\NN_\ell^\Gamma} \norm{v_z^\ell}\hyp^2
  \lesssim \norm{\widetilde v}\hyp^2.
\end{align}
This provides a decomposition of $\widetilde\phi$ into functions $\phi_z^\ell =
(v_z^\ell)'\in\YY_z^\ell$ by
\begin{align}\label{eq:slpLMLD4}
  \widetilde\phi = (\widetilde v)' =: \sum_{\ell=0}^L \sum_{z\in\widetilde\NN_\ell^\Gamma} \phi_z^\ell.
\end{align}
The identities from~\eqref{eq:nedelec} imply
\begin{align}\label{eq:slpLMLD5}
  \sum_{\ell=0}^L \sum_{z\in\widetilde\NN_\ell^\Gamma} \norm{\phi_z^\ell}\slp^2
   = \sum_{\ell=0}^L \sum_{z\in\widetilde\NN_\ell^\Gamma}
   \dual{\slp\phi_z^\ell}{\phi_z^\ell}_\Gamma
   = \sum_{\ell=0}^L \sum_{z\in\widetilde\NN_\ell^\Gamma}
   \dual{\hyp v_z^\ell}{v_z^\ell}_\Gamma
\end{align}
The estimate $\dual{\hyp v_z^\ell}{v_z^\ell}_\Gamma \leq \norm{v_z^\ell}\hyp^2$
and~\eqref{eq:slpLMLD3} prove
\begin{align}\label{eq:slpLMLD6}
  \sum_{\ell=0}^L \sum_{z\in\widetilde\NN_\ell^\Gamma} 
  \dual{\hyp v_z^\ell}{v_z^\ell}_\Gamma
  \leq \sum_{\ell=0}^L \sum_{z\in\widetilde\NN_\ell^\Gamma} 
  \norm{v_z^\ell}\hyp^2
  \lesssim \norm{\widetilde v}\hyp^2.
\end{align}
Since $\dual{\widetilde v}1_\Gamma = 0$, there holds
$\norm{\widetilde v}\hyp^2 = \dual{\hyp\widetilde v}{\widetilde v}_\Gamma$.
This,~\eqref{eq:slpLMLD5}--\eqref{eq:slpLMLD6}, and Maue's formula~\eqref{eq:nedelec}
yield
\begin{align}\label{eq:slpLMLD7}
  \sum_{\ell=0}^L \sum_{z\in\widetilde\NN_\ell^\Gamma} \norm{\phi_z^\ell}\slp^2
  \lesssim  \dual{\hyp \widetilde v}{\widetilde v}_\Gamma =
 \dual{\slp\widetilde\phi}{\widetilde\phi}_\Gamma =
   \norm{\widetilde\phi}\slp^2.
\end{align}
Recall that $\widetilde\phi = \phi-\phi_0$.
With~\eqref{eq:slpLMLD2}, the triangle
inequality yields 
\begin{align}\label{eq:slpLMLD8}
  \norm{\phi_0}\slp^2 + \sum_{\ell=0}^L \sum_{z\in\widetilde\NN_\ell^\Gamma} 
  \norm{\phi_z^\ell}\slp^2 \lesssim  \norm{\phi_0}\slp^2 +
  \norm{\widetilde\phi}\slp^2 \lesssim \norm{\phi_0}\slp^2 +
  \norm{\phi}\slp^2 \lesssim \norm{\phi}\slp^2,
\end{align}
where the hidden constants depend only on $\Gamma$, the initial triangulation
$\TT_0^\Gamma$, as well as the chosen mesh-refinement strategy.
By means of Lemma~\ref{lem:stabledec}~(i), this proves the lower bound
in~\eqref{eq:slpLMLD}.
\end{proof}

\begin{proof}[Proof of Proposition~\ref{thm:slpLMLD}, upper bound in~\eqref{eq:slpLMLD}]
Recall the unique splitting $\phi = \phi_0 + \widetilde\phi$
from~\eqref{eq:splitPhi}. Let $\sum_{\ell=0}^L \sum_{z\in\widetilde\NN_\ell^\Gamma} \phi_z^\ell
= \widetilde\phi$ denote an arbitrary splitting of $\widetilde\phi \in 
\sum_{\ell=0}^L \sum_{z\in\widetilde\NN_\ell^\Gamma} \YY_z^\ell$. Note that
$\phi_z^\ell = \alpha_z^\ell \chi_z^\ell$ for some $\alpha_z^\ell\in\R$. We define $\widetilde v$ as
\begin{align}\label{eq:slpLMLDup3}
  \widetilde v := \sum_{\ell=0} \sum_{z\in\widetilde\NN_\ell^\Gamma} v_z^\ell \quad\text{with }
  v_z^\ell := \alpha_z^\ell \zeta_z^\ell
\end{align}
and stress that $\widetilde\phi = (\widetilde v)'$ as well as $\phi_z^\ell = (v_z^\ell)'$.
The abstract result~\cite[Lemma~3.1]{zhang92} states
\begin{align}\label{eq:proj:evmax}
  \evmax(\PAShyp) = \max_{v\in\ZZ^L}
  \frac{\norm{v}\hyp^2}{\min\limits_{\sum_{\ell=0}^L \sum_{z\in\widetilde\NN_\ell^\Gamma} v_z^\ell = v}
  \sum_{\ell=0}^L \sum_{z\in\widetilde\NN_\ell^\Gamma} \norm{v_z^\ell}\hyp^2},
\end{align}
since $\PAShyp$ is a finite sum of symmetric projections.
From~Lemma~\ref{lem:hypAS},
we get uniform boundedness of the Rayleigh quotient $\dual{\PAShyp v}v_\hyp /
\norm{v}\hyp^2\leq \c{hyp:ub}$. Thus, $\evmax(\PAShyp)\leq \c{hyp:ub} < \infty$ is uniformly bounded.
Then,~\eqref{eq:proj:evmax} yields
\begin{align}\label{eq:slpLMLDup2}
  \norm{\widetilde v}\hyp^2 \lesssim \sum_{\ell=0}^L \sum_{z\in\widetilde\NN_\ell^\Gamma}
  \norm{v_z^\ell}\hyp^2.
\end{align}
Together with Maue's formula~\eqref{eq:nedelec}, the
definition~\eqref{eq:slpLMLDup3}, and the norm equivalence 
\begin{align*}
  \norm{v_z^\ell}\hyp^2 \simeq \norm{v_z^\ell}{H^{1/2}(\Gamma)}^2 = \norm{v_z^\ell}{\widetilde
H^{1/2}(\omega_\ell(z))}^2 \simeq \dual{\hyp v_z^\ell}{v_z^\ell}_\Gamma 
= \dual{\slp \phi_z^\ell}{\phi_z^\ell}_\Gamma, 
\end{align*}
we infer 
\begin{align*}
  \norm{\widetilde\phi}\slp^2 =
  \dual{\slp\widetilde\phi}{\widetilde\phi}_\Gamma = 
  \dual{\hyp\widetilde v}{\widetilde v}_\Gamma \leq \norm{\widetilde v}\hyp^2
  \lesssim \sum_{\ell=0}^L \sum_{z\in\widetilde\NN_\ell^\Gamma} \norm{v_z^\ell}\hyp^2
  \simeq \sum_{\ell=0}^L \sum_{z\in\widetilde\NN_\ell^\Gamma}
  \norm{\phi_z^\ell}\slp^2.
\end{align*}
Finally, $\phi = \phi_0 + \widetilde\phi$ yields
\begin{align}\label{eq:slpLMLDup5}
\begin{split}
  \norm\phi\slp^2 &\lesssim \norm{\phi_0}\slp^2 +
  \norm{\widetilde\phi}\slp^2 \lesssim \norm{\phi_0}\slp^2 + 
  \sum_{\ell=0}^L \sum_{z\in\widetilde\NN_\ell^\Gamma} \norm{\phi_z^\ell}\slp^2,
\end{split}
\end{align}
where the hidden constants depend only on $\Gamma$, the initial
triangulation $\TT_0^\Gamma$, and the chosen mesh-refinement strategy.
Lemma~\ref{lem:stabledec} (ii) and~\eqref{eq:slpLMLDup5} prove the upper bound
in~\eqref{eq:slpLMLD}.

\end{proof}

\begin{proof}[Proof of Theorem~\ref{thm:precond:slp}]
Recall that $\psi_{T_m}^\ell \in\YY^\ell$ denotes the characteristic function of
$T_m^\ell \in\TT_\ell^\Gamma$.
First, we prove the relation
\begin{align*}
  \PASslp \phi = \sum_{m=1}^{M_L} (\PPrec_\slp^{-1}\AA_\slp\pphi)_m \psi_{T_m^L}
  \quad\text{for all } \phi = \sum_{j=1}^{M_L} (\pphi)_j \psi_{T_j^L} \in \YY^L.
\end{align*}
Recall $\PASslp = \precslp^{00} + \sum_{\ell=0}^L
\sum_{z\in\widetilde\NN_\ell^\Gamma} \precslp_z^\ell$ from~\eqref{def:slp:as}
and $\onemat\in\R^{M_L}$ with $(\onemat)_j = 1$ for all $j=1,\dots,M_L$.
By definition~\eqref{def:slp:proj} of $\precslp^{00}$, it follows with $D=\norm{1}\slp^2$
\begin{align}\label{eq:precslp:const}
  \precslp^{00} \phi = D^{-1}\dual{\phi}1_\slp \,1 = D^{-1}
  \dual{\pphi}{\onemat}_{\AA_\slp} = \sum_{m=1}^{M_L}  (\onemat
  D^{-1} \onemat^T \AA_\slp \pphi)_m \psi_{T_m^L}.
\end{align}
Moreover, from~\eqref{def:slp:proj} we also infer
\begin{align}\label{eq:precslp:node}
  \precslp_z^\ell\phi = \norm{\chi_z^\ell}\slp^{-2}
  \dual{\phi}{\chi_z^\ell}_\slp \, \chi_z^\ell \quad\text{for all }
  z\in\widetilde\NN_\ell^\Gamma.
\end{align}
With the definition of $\widetilde\Haar^\ell$ and $\embedPmat^\ell$ from
Section~\ref{sec:main:LMLD}, each Haar basis function $\chi_z^\ell \in
\widetilde\ZZ^\ell$ can be represented as
\begin{align}\label{eq:haar:repr}
  \chi_{z_k}^\ell = \sum_{j=1}^{M_\ell} ( \widetilde\Haar^\ell)_{jk}
  \psi_{T_j^\ell} = \sum_{j=1}^{M_\ell} \sum_{m=1}^{M_L}
  (\widetilde\Haar^\ell)_{jk} (\embedPmat^\ell)_{mj} \psi_{T_m^L}
  = \sum_{m=1}^{M_L} (\embedPmat^\ell \widetilde\Haar^\ell)_{mk} \psi_{T_m^L}.
\end{align}
Thus, 
\begin{align*}
  \dual{\phi}{\chi_{z_k}^\ell}_\slp &= \sum_{m=1}^{M_L}
  (\embedPmat^\ell\widetilde\Haar^\ell)_{mk}
  \dual{\phi}{\psi_{T_m^L}}_{\slp} = \sum_{m=1}^{M_L}
  ( (\widetilde\Haar^\ell)^T (\embedPmat^\ell)^T)_{km} (\AA_\slp\pphi)_m \\
  &= ((\widetilde\Haar^\ell)^T (\embedPmat^\ell)^T \AA_\slp\pphi)_k.
\end{align*}
Furthermore, the last identity together with~\eqref{eq:precslp:node},
\eqref{eq:haar:repr}, and $\norm{\chi_{z_k}^\ell}\slp^2 =
(\widetilde\DD_\hyp^\ell)_{kk}$ show
\begin{align}
  \precslp_{z_k}^\ell \phi = \sum_{m=1}^{M_L} (\embedPmat^\ell
  \widetilde\Haar^\ell)_{mk} (\widetilde\DD_\hyp^\ell)^{-1}_{kk} 
  ((\widetilde\Haar^\ell)^T (\embedPmat^\ell)^T \AA_\slp\pphi)_k \psi_{T_m^L}.
\end{align}
Summing the last terms over all $k = 1,\dots,\widetilde N_\ell^\Gamma := \#
\widetilde\NN_\ell^\Gamma$ and $\ell=0,\dots,L$ yields
with~\eqref{eq:precslp:const}
\begin{align}
\begin{split}
  \PASslp \phi &= \sum_{m=1}^{M_L} (\onemat D^{-1} \onemat^T \AA_\slp\pphi)_m
  \psi_{T_m^L} +  \sum_{\ell=0}^L \sum_{m=1}^{M_L} 
   (\embedPmat^\ell \widetilde\Haar^\ell (\widetilde\DD_\hyp^\ell)^{-1}
   (\widetilde\Haar^\ell)^T (\embedPmat^\ell)^T \AA_\slp\pphi)_m \psi_{T_m^L} 
   \\
   &= \sum_{m=1}^{M_L} (\PPrec_\slp^{-1} \AA_\slp\pphi)_m \psi_{T_m^L}. 
\end{split}
\end{align}
The last identity together with Proposition~\ref{thm:slpLMLD} implies
\begin{align}\label{eq:precslp:equiv}
  \dual{\PPrec_\slp^{-1}\AA_\slp \pphi}\pphi_{\AA_\slp} =
  \dual{\PASslp\phi}\phi_\slp \simeq \norm{\phi}\slp^2 = \norm\pphi{\AA_\slp}^2,
\end{align}
where the hidden constants depend only on $\Gamma$, the initial triangulation
$\TT_0^\Gamma$, as well as on the chosen mesh-refinement.
Finally, by setting $\pphi = \AA_\slp^{-1/2}(\AA_\slp^{-1/2}\PPrec_\slp\AA_\slp^{-1/2})^{1/2}
\AA_\slp^{1/2} {\boldsymbol\Psi}$
in~\eqref{eq:precslp:equiv}, we get
\begin{align*}
  \dual{\PPrec_\slp \boldsymbol\Psi}{\boldsymbol\Psi}_2
  \simeq \dual{\AA_\slp \boldsymbol\Psi}{\boldsymbol\Psi}_2 \quad\text{for all }
  \boldsymbol\Psi \in\R^{M_L},
\end{align*}
which concludes the proof.
\end{proof}


\section{Proof of Theorem~\ref{thm:main}}\label{sec:proof}

\noindent
Basically, we follow the lines of the proof of~\cite[Theorem~5.2]{ms98}, which
was stated for the symmetric coupling and a block-diagonal
preconditioner based on a
hierarchical basis decomposition of the underlying discrete spaces. Here, we adapt the proof to the
non-symmetric Johnson-N\'ed\'elec coupling.

For the analysis of the proposed block-diagonal preconditioner, we define the
operator $\pform$, which can be interpreted as a preconditioning form of the
operator $\jn$.
The next result directly follows from the properties of the operator
$\fem$ from Lemma~\ref{lemma:A} and the properties of the simple-layer integral operator
$\slp$.

\begin{lemma}\label{lemma:B}
  For $(u,\phi),(v,\psi)\in\HH$, define
  \begin{align*}
    \dual{\pform(u,\phi)}{(v,\psi)} := \dual{\fem u}v +\dual\psi{\slp\phi}_\Gamma.
  \end{align*}
  Then, the operator $\pform : \HH\to\HH^*$ is linear, symmetric, continuous, and elliptic, and the constants.
  \begin{align*}
    c_\pform := \inf_{\bignull\neq
    (u,\phi)\in\HH}\frac{\dual{\pform(u,\phi)}{(u,\phi)}}{\norm{(u,\phi)}\HH^2}
    \quad\text{and}\quad  C_\pform := \sup_{\bignull\neq (u,\phi)\in\HH}\frac{\norm{\pform(u,\phi)}{\HH^*}}{\norm{(u,\phi)}\HH}
  \end{align*}
  satisfy $0<c_\pform\le C_\pform<\infty$ and depend only on
  $c_\fem$ and $C_\fem$ from~\eqref{eq:const:A} as well as on $\Omega$.\qed
\end{lemma}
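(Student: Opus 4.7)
The plan is to verify the four properties of $\pform$ directly from the corresponding properties of its two constituents $\fem$ and $\slp$, since $\pform$ has a block-diagonal structure on the product Hilbert space $\HH = H^1(\Omega)\times H^{-1/2}(\Gamma)$ with its natural norm $\norm{(u,\phi)}\HH^2 = \norm{u}{H^1(\Omega)}^2 + \norm{\phi}{H^{-1/2}(\Gamma)}^2$. Linearity is immediate from the linearity of both $\fem$ (by Lemma~\ref{lemma:A}) and $\slp$ (by~\eqref{def:slp}), so there is nothing to do beyond noting this.

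For symmetry, I would write out $\dual{\pform(u,\phi)}{(v,\psi)} = \dual{\fem u}{v} + \dual{\psi}{\slp\phi}_\Gamma$ and compare with the exchanged pair. Symmetry of $\fem$ (asserted in Lemma~\ref{lemma:A}) gives $\dual{\fem u}{v} = \dual{\fem v}{u}$, while the symmetry identity~\eqref{eq:bio:symmetry} for $\slp$ yields $\dual{\psi}{\slp\phi}_\Gamma = \dual{\phi}{\slp\psi}_\Gamma$. Adding these produces $\dual{\pform(u,\phi)}{(v,\psi)} = \dual{\pform(v,\psi)}{(u,\phi)}$, which is the symmetry of $\pform$.

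For continuity, the triangle inequality gives $|\dual{\pform(u,\phi)}{(v,\psi)}| \leq |\dual{\fem u}{v}| + |\dual{\psi}{\slp\phi}_\Gamma|$. The first term is bounded by $C_\fem\norm{u}{H^1(\Omega)}\norm{v}{H^1(\Omega)}$ from Lemma~\ref{lemma:A}, while the second is bounded by $\norm{\slp}{\rm op}\norm{\phi}{H^{-1/2}(\Gamma)}\norm{\psi}{H^{-1/2}(\Gamma)}$ using the boundedness $\slp \in L(H^{-1/2}(\Gamma);H^{1/2}(\Gamma))$ from~\eqref{def:slp} (an $\Omega$-dependent constant). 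A discrete Cauchy–Schwarz in $\R^2$ then yields $|\dual{\pform(u,\phi)}{(v,\psi)}| \leq C_\pform \norm{(u,\phi)}\HH \norm{(v,\psi)}\HH$, with $C_\pform$ depending only on $C_\fem$ and $\Omega$. Ellipticity is equally direct: combining the ellipticity $\dual{\fem u}{u} \geq c_\fem \norm{u}{H^1(\Omega)}^2$ from~\eqref{eq:const:A} with the ellipticity estimate~\eqref{eq:slp:ellipticity} for $\slp$, one obtains
\begin{equation*}
  \dual{\pform(u,\phi)}{(u,\phi)} \geq c_\fem \norm{u}{H^1(\Omega)}^2 + c_\slp \norm{\phi}{H^{-1/2}(\Gamma)}^2 \geq \min\{c_\fem,c_\slp\} \norm{(u,\phi)}\HH^2,
\end{equation*}
so $c_\pform \geq \min\{c_\fem, c_\slp\} > 0$ depends only on $c_\fem$ and $\Omega$ (through $c_\slp$).

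Honestly, there is no real obstacle here: $\pform$ was designed to be a block-diagonal surrogate for $\jn$, and every property is inherited termwise. The only subtlety worth flagging in the write-up is that the $H^{-1/2}$-ellipticity of $\slp$ requires $\diam(\Omega)<1$ (which has already been assumed before~\eqref{eq:slp:ellipticity}), and that the dependencies of $c_\pform, C_\pform$ on $\Omega$ enter through both the bounds for $\slp$ and the constants $c_\fem, C_\fem$ from Lemma~\ref{lemma:A}.
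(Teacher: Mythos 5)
Your proposal is correct and matches the paper's intent exactly: the paper marks Lemma~\ref{lemma:B} with \qed and simply notes that it ``directly follows from the properties of the operator $\fem$ from Lemma~\ref{lemma:A} and the properties of the simple-layer integral operator $\slp$,'' which is precisely the termwise inheritance argument you carried out. You have just written out the details the paper omits.
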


The following auxiliary result is explicitly stated for the Johnson-N\'ed\'elec coupling
and also used in~\cite{ms98} for the symmetric coupling
accordingly.

\begin{lemma}\label{lemma:discretejn}
Let $\HH^L \subset \HH = H^1(\Omega)\times H^{-1/2}(\Gamma)$ be a finite
dimensional subspace.
Let $\jn_L, \pform_L : \HH^L \to (\HH^L)^*$ denote the operators $\jn,\pform$ restricted to the
discrete space $\HH^L$, i.e.
\begin{align*}
  \dual{\jn_L\uuu_L}{\vvv_L} &:= \dual{\jn \uuu_L}{\vvv_L} \\
  \dual{\pform_L\uuu_L}{\vvv_L} &:= \dual{\pform \uuu_L}{\vvv_L}
\end{align*}
for all $\uuu_L,\vvv_L\in\HH^L$. Define $\tmp_L := \jn_L^*
\pform_L^{-1} \jn_L
: \HH^L\to(\HH^L)^*$. 
Then, the Galerkin matrix $\tmpmat$ of $\tmp_L$ with respect to the basis
$\{\varphi_j\}_{j=1}^{N_L+M_L}$ of $\HH^L$, i.e. $(\tmpmat)_{jk} =
\dual{\tmp_L\varphi_k}{\varphi_j}$ for $j,k=1,\dots,N_L+M_L$, satisfies
\begin{align}
  \tmpmat = \AA_\jn^T \AA_\pform^{-1} \AA_\jn.
\end{align}
\end{lemma}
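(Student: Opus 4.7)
The plan is to unwind the definition of $\tmp_L$ via coordinate vectors and reduce the identification to an elementary matrix computation. I would identify each $u \in \HH^L$ with its coordinate vector $\UU \in \R^{N_L+M_L}$ via $u = \sum_j (\UU)_j \varphi_j$, so that the defining identities of the Galerkin matrices read
\begin{align*}
  \dual{\jn_L u}{v} = \VV^T \AA_\jn \UU
  \quad\text{and}\quad
  \dual{\pform_L u}{v} = \VV^T \AA_\pform \UU
\end{align*}
for all $u,v \in \HH^L$, where $\AA_\pform$ denotes the Galerkin matrix of $\pform_L$ in this basis.

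The argument would then proceed in three elementary steps. First, I would compute the Galerkin matrix of the Banach-space adjoint $\jn_L^* : \HH^L \to (\HH^L)^*$: the defining identity $\dual{\jn_L^* u}{v} = \dual{\jn_L v}{u}$ immediately gives $\dual{\jn_L^* \varphi_k}{\varphi_j} = (\AA_\jn)_{kj}$, so the Galerkin matrix of $\jn_L^*$ is $\AA_\jn^T$. Second, for fixed $u \in \HH^L$, I would characterize $w := \pform_L^{-1} \jn_L u \in \HH^L$ (which is well-defined since Lemma~\ref{lemma:B} provides ellipticity, and hence invertibility, of $\pform_L$): testing the discrete equation $\pform_L w = \jn_L u$ against each basis function $\varphi_j$ yields the linear system $\AA_\pform \WW = \AA_\jn \UU$, so $\WW = \AA_\pform^{-1} \AA_\jn \UU$. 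Third, combining the two pieces gives
\begin{align*}
  \dual{\tmp_L u}{v}
  = \dual{\jn_L^* w}{v}
  = \VV^T \AA_\jn^T \WW
  = \VV^T \AA_\jn^T \AA_\pform^{-1} \AA_\jn \UU
\end{align*}
for all $u,v \in \HH^L$. Specializing to $u = \varphi_k$ and $v = \varphi_j$ then yields the claimed entrywise identity $\tmpmat = \AA_\jn^T \AA_\pform^{-1} \AA_\jn$.

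No serious obstacle is expected; the statement is essentially bookkeeping once the correspondence between operators on $\HH^L$ and their Galerkin matrices is fixed. The only points requiring mild care are keeping track of which slot of the duality pairing is being tested (so that the transposition ends up on the correct factor) and invoking ellipticity of $\pform$ from Lemma~\ref{lemma:B} to justify invertibility of $\AA_\pform$ and hence the discrete inversion $\pform_L^{-1}$.
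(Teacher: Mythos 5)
Your proof is correct and follows essentially the same route as the paper: both characterize $\ww = \pform_L^{-1}\jn_L\uuu$ by testing the discrete equation against basis functions to obtain $\AA_\pform\WW = \AA_\jn\UU$, and both reduce the final identity to elementary matrix bookkeeping. The only cosmetic difference is that you compute the Galerkin matrix of $\jn_L^*$ explicitly as $\AA_\jn^T$, whereas the paper shifts the adjoint across the duality pairing in the first line and works directly with $\dual{\pform_L^{-1}\jn_L\varphi_k}{\jn_L\varphi_j}$.
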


\begin{proof}
Let $\{\ee^j\}_{j=1}^{N_L+M_L}$ denote the canonical basis of
$\R^{N_L+M_L}$ with
$\dual{\ee^j}{\ee^k}_2 = \delta_{jk}$.
The matrix entry of the $j$-th row and $k$-th column is given by
\begin{align*}
  (\tmpmat)_{jk} = \dual{\jn_L^*\pform_L^{-1}\jn_L \varphi_k}{\varphi_j} =
  \dual{\pform_L^{-1} \jn_L\varphi_k}{\jn_L\varphi_j}.
\end{align*}
Let $\ww_k := \pform_L^{-1} \jn_L\varphi_k$ 
and note that by definition of the inverse of the \emph{discrete} operator $\pform_L$,
$\ww_k$ satisfies
\begin{align}\label{eq:discretejn:proof1}
  \dual{\pform_L \ww_k}{\varphi_m} = \dual{\jn_L\varphi_k}{\varphi_m}
  \quad m = 1,\dots,N_L+M_L.
\end{align}
Note that $\dual{\jn_L\varphi_k}{\varphi_m} = (\AA_\jn)_{m k} = (\AA_\jn\ee^k)_m$.
Together with the basis representation $\ww_k = \sum_{j=1}^{N_L+M_L} (\WW)_j
\varphi_j$, the Galerkin formulation~\eqref{eq:discretejn:proof1} of $\ww_k$ is thus equivalent to
\begin{align*}
  \AA_\pform \WW = \AA_\jn \ee^k.
\end{align*}
By choice of $\ww_k$, we get
\begin{align*}
  (\tmpmat)_{jk} &= \dual{\ww_k}{\jn_L \varphi_j} =
  \sum_{m=1}^{N_L+M_L}
  (\WW)_m \dual{\varphi_m}{\jn_L \varphi_j} = \sum_{m=1}^{N_L+M_L}
  \big(\AA_\pform^{-1}\AA_\jn\ee^k\big)_m
  \big(\AA_\jn\ee^j\big)_m \\
  &= \dual{\AA_\pform^{-1}\AA_\jn\ee^k}{\AA_\jn\ee^j}_2 = \dual{\AA_\jn^T
  \AA_\pform^{-1} \AA_\jn\ee^k}{\ee^j}_2 \quad\text{for all
  }j,k=1,\dots,N_L+M_L.
\end{align*}
This concludes the proof.
\end{proof}

\begin{proof}[Proof of Theorem~\ref{thm:main}]
We use the following result on the reduction of the relative residual in the
preconditioned GMRES
Algorithm~\ref{alg:gmres}, which can be found, e.g., in~\cite[Section~3]{heuerstep98}:
Due to~\cite{ees83,saad}, the $j$-th residuum from the preconditioned GMRES
Algorithm~\ref{alg:gmres} is bounded by
\begin{subequations}\label{eq:gmres:bound}
\begin{align}
  \norm{\RR_j}{\PPrec_\jn} \leq \big( 1-\alpha^2/\beta^2\big)^{j/2}
  \norm{\RR_0}{\PPrec_\jn},
\end{align}
with constants
\begin{align}\label{eq:gmres:bounda}
  \alpha
  &:=\inf_{\UU\neq\bignull}\frac{\dual{\PPrec_\jn^{-1}\AA_\jn\UU}{\UU}_{\PPrec_\jn}}{\norm{\UU}{\PPrec_\jn}^2}
  \leq \inf_{\UU\neq\bignull}
  \frac{\norm{\PPrec_\jn^{-1}\AA_\jn\UU}{\PPrec_\jn}}{\norm{\UU}{\PPrec_\jn}} =
  \norm{(\PPrec_\jn^{-1}\AA_\jn)^{-1}}{\PPrec_\jn}^{-1}, \\
  \label{eq:gmres:boundb}
  \beta  &:=\sup_{\UU\neq\bignull}\frac{\norm{\PPrec_\jn^{-1}\AA_\jn\UU}{\PPrec_\jn}}{\norm{\UU}{\PPrec_\jn}}
   = \norm{\PPrec_\jn^{-1}\AA_\jn}{\PPrec_\jn},
\end{align}
\end{subequations}
when $\dual\cdot\cdot_{\PPrec_\jn} := \dual{\PPrec_\jn(\cdot)}\cdot_2$ is used as inner product
in the preconditioned GMRES algorithm. 
We also refer to~\cite{sarkisszyld} for a discussion on preconditioned
GMRES methods using different inner products.

Due to~\eqref{eq:gmres:bound}, $\cond_{\PPrec_\jn}(\PPrec_\jn^{-1}\AA_\jn) \leq
\beta/\alpha$ and we have to provide a lower bound
for~\eqref{eq:gmres:bounda} and an upper bound for~\eqref{eq:gmres:boundb}.
Recall that the preconditioner matrix $\PPrec_\jn$ and the Galerkin matrix
$\AA_\pform$ of $\pform$ have the form
\begin{align*}
  \PPrec_\jn = \begin{pmatrix}
    \PPrec_\fem & \bignull \\
    \bignull & \PPrec_\slp
  \end{pmatrix},
  \qquad \AA_\pform = 
  \begin{pmatrix}
    \AA_\fem & \bignull \\
    \bignull & \AA_\slp
  \end{pmatrix}.
\end{align*}
Define $d_\pform:= \min\{d_\fem,d_\slp\}$, $D_\pform:=\max\{D_\fem,D_\slp\}$.
From Theorem~\ref{thm:precond:fem} and Theorem~\ref{thm:precond:slp}, it follows
\begin{align}\label{eq:P_ineq}
  d_\pform \dual{\PPrec_\jn\VV}{\VV}_2 \leq \dual{\AA_\pform\VV}{\VV}_2 \leq
  D_\pform \dual{\PPrec_\jn\VV}{\VV}_2 \quad\text{for all }\VV\in\R^{N_L+M_L},
\end{align}
which is equivalent to
\begin{align}\label{eq:P_ineq2}
  d_\pform \dual{\AA_\pform^{-1}\UU}{\UU}_2 \leq \dual{\PPrec_\jn^{-1}\UU}{\UU}_2 \leq
  D_\pform \dual{\AA_\pform^{-1}\UU}{\UU}_2 \quad\text{for all }\UU\in\R^{N_L+M_L}.
\end{align}
Here, the equivalence of~\eqref{eq:P_ineq}--\eqref{eq:P_ineq2} follows from
the choice 
$\UU = \AA_\pform^{1/2} (\AA_\pform^{-1/2} \PPrec_\jn
\AA_\pform^{-1/2})^{1/2} \AA_\pform^{1/2}\VV$
resp.\ 
$\VV = \AA_\pform^{-1/2} (\AA_\pform^{-1/2} \PPrec_\jn
\AA_\pform^{-1/2})^{-1/2} \AA_\pform^{-1/2}\UU$
and elementary calculations.
Setting $\UU = \AA_\pform\WW$ in~\eqref{eq:P_ineq2} and since $\PPrec_\jn^{-1}
\AA_\pform$ is symmetric with respect to $\dual\cdot\cdot_{\AA_\pform}$ yields
\begin{align}\label{eq:P_ineq3}
  d_\pform \leq \evmin(\PPrec_\jn^{-1}\AA_\pform) \leq
  \dual{\PPrec_\jn^{-1}\AA_\pform\WW}{\WW}_{\AA_\pform} / \norm\WW{\AA_\pform}^2 \leq
  \evmax(\PPrec_\jn^{-1}\AA_\pform) \leq D_\pform.
\end{align}

For given $\UU = (\xx,\pphi)^T \in \R^{N_L+M_L}$, let $\uu_L = (u_L,\phi_L)
\in\HH^L$ denote the corresponding function.
We start to prove a lower bound for~\eqref{eq:gmres:bounda}. 
Lemma~\ref{lemma:A}, Lemma~\ref{lemma:B}, and~\eqref{eq:P_ineq} yield
\begin{align*}
  \dual{\PPrec_\jn^{-1}\AA_\jn\UU}{\UU}_{\PPrec_\jn} &= \dual{\AA_\jn
  \UU}{\UU}_2 = \dual{\jn\uu_L}{\uu_L} \geq c_\jn \norm{\uu_L}\HH^2 \geq
  c_\jn C_\pform^{-1} \dual{\pform\uu_L}{\uu_L} \\
  &= c_\jn C_\pform^{-1}\dual{\AA_\pform\UU}\UU_2 
  = c_\jn C_\pform^{-1} d_\pform \dual{\PPrec_\jn\UU}\UU_2
  = c_\jn C_\pform^{-1} d_\pform \norm\UU{\PPrec_\jn}^2.
\end{align*}
Thus, $\widetilde\alpha := c_\jn C_\pform^{-1} d_\pform = c_\jn C_\pform^{-1} \min\{d_\fem,d_\slp\}$ is a lower
bound for $\alpha$ from~\eqref{eq:gmres:bounda}.

It remains to prove an upper bound for $\beta$ from~\eqref{eq:gmres:boundb}.
With~\eqref{eq:P_ineq2} and the discrete operator $\tmp_L = \jn_L^*
\pform_L^{-1} \jn_L$ with Galerkin matrix $\tmpmat$ from Lemma~\ref{lemma:discretejn}, we infer
\begin{align*}
  \norm{\PPrec_\jn^{-1}\AA_\jn\UU}{\PPrec_\jn}^2 
  = \dual{\AA_\jn\UU}{\PPrec_\jn^{-1}\AA_\jn\UU}_2
  \leq D_\pform \dual{\AA_\pform^{-1}\AA_\jn\UU}{\AA_\jn\UU}_2 
  &= D_\pform \dual{\tmpmat \UU}\UU_2 \\
  &= D_\pform \dual{\tmp_L\uu_L}{\uu_L}.
\end{align*}
Moreover, it holds
\begin{align*}  
 \dual{\tmp_L\uu_L&}{\uu_L} 
  = \dual{\jn_L^* \pform_L^{-1} \jn_L \uu_L}{\uu_L}
  = \dual{\pform_L^{-1} \jn_L \uu_L}{\jn_L \uu_L} 
  = \dual{\pform_L^{-1} \jn_L \uu_L}{\jn \uu_L} \\
  &\leq \norm{\jn\uu_L}{\HH^*} \norm{\pform_L^{-1} \jn_L\uu_L}\HH
  \leq C_\jn \norm{\uu_L}\HH \norm{\pform_L^{-1} \jn_L\uu_L}\HH.
\end{align*}
Note that $\ww_L := \pform_L^{-1} \jn_L \uu_L$ is the Galerkin solution of
\begin{align*}
  \dual{\pform \ww_L}{\vv_L} = \dual{\jn \uu_L}{\vv_L} \quad\text{for all }\vv_L
  \in\HH^L.
\end{align*}
Therefore, we can estimate the norm of $\ww_L$ by
\begin{align*}
  c_\pform\,\norm{\ww_L}\HH^2 
  \leq \dual{\pform \ww_L}{\ww_L} 
  = \dual{\jn \uu_L}{\ww_L} 
  \leq \norm{\jn\uu_L}{\HH^*}\norm{\ww_L}\HH 
  \leq C_\jn \norm{\uu_L}\HH\norm{\ww_L}\HH.
\end{align*}
With~\eqref{eq:P_ineq}, this altogether gives
\begin{align*}
  \norm{\PPrec_\jn^{-1}\AA_\jn\UU}{\PPrec_\jn}^2 &\leq D_\pform c_\pform^{-1}
  C_\jn^2 \norm{\uu_L}\HH^2\leq D_\pform c_\pform^{-2} C_\jn^2 
  \dual{\pform \uu_L}{\uu_L} = D_\pform c_\pform^{-2} C_\jn^2 \dual{\AA_\pform \UU}\UU_2 \\
  & \leq D_\pform^2 c_\pform^{-2} C_\jn^2 \dual{\PPrec_\jn \UU}\UU_2
  = D_\pform^2 c_\pform^{-2} C_\jn^2 \norm\UU{\PPrec_\jn}^2.
\end{align*}
Then, $\widetilde\beta := D_\pform c_\pform^{-1} C_\jn = \max\{D_\fem,D_\slp\}
c_\pform^{-1} C_\jn$ is an upper bound for $\beta$ from~\eqref{eq:gmres:boundb}.

Finally, the definition
\begin{align*}
  q_{\rm gmres} := (1-\widetilde\alpha^2/\widetilde\beta^2)^{1/2}  = \left(1-
  \Big(\frac{c_\jn c_\pform \min\{d_\fem,d_\slp\}}{C_\jn C_\pform
  \max\{D_\fem,D_\slp\}}\Big)^2 \right)^{1/2}
\end{align*}
concludes the proof.
\end{proof}

\begin{remark}\label{rem:cond}
Note that the last proof unveils
\begin{align}
  \cond_{\PPrec_\jn}(\PPrec_\jn^{-1}\AA_\jn) \lesssim
  \frac{\evmax(\PPrec_\jn^{-1}\AA_\pform)}{\evmin(\PPrec_\jn^{-1}\AA_\pform)} =
  \cond_{\PPrec_\jn}(\PPrec_\jn^{-1}\AA_\pform) = \cond_{\AA_\pform}(\PPrec_\jn^{-1}\AA_\pform).
\end{align}
This result can be obtained by replacing $d_\pform$ with
$\evmin(\PPrec_\jn^{-1}\AA_\pform)$ resp. $D_\pform$ with
$\evmax(\PPrec_\jn^{-1}\AA_\pform)$ in the proof of Theorem~\ref{thm:main}.
\end{remark}


\section{Extension to other coupling methods and further remarks}\label{sec:extensions}

\subsection{Symmetric coupling}
The model problem~\eqref{eq:model} can equivalently be reformulated by means of the
symmetric coupling~\cite{costabel,han90}:
Find $(u,\phi)\in\HH := H^1(\Omega)\times H^{-1/2}(\Gamma)$ such that
\begin{align}\label{eq:sym:varform}
\begin{split}
  \dual{\material\nabla u}{\nabla v}_\Omega + \dual{\hyp u}v_\Gamma +
  \dual{(\dlp'-\tfrac12)\phi}v_\Gamma &= \dual{f}v_\Omega + \dual{\phi_0 + \hyp
  u_0}v_\Gamma, \\
  \dual\psi{(\tfrac12-\dlp)u+\slp\phi}_\Gamma &=
  \dual\psi{(\tfrac12-\dlp)u_0}_\Gamma.
\end{split}
\end{align}
for all $(v,\psi)\in\HH$. Analogously to the
Johnson-N\'ed\'elec coupling~\eqref{eq:jn:staboperator}--\eqref{eq:jn:disrete},
we define the operator $\sym : \HH\to\HH^*$ resp. the linear
functional $\symRHS\in\HH^*$ for an equivalent operator 
formulation
\begin{align*}
  \sym(u,\phi) = \symRHS
\end{align*}
of the symmetric
coupling~\eqref{eq:sym:varform} by
\begin{align*}
  \dual{\sym (u,\phi)}{(v,\psi)} &:= \dual{\material\nabla u}{\nabla v}_\Omega +
  \dual{\hyp u}v_\Gamma + \dual{(\dlp'-\tfrac12)\phi}v_\Gamma
  + \dual\psi{(\tfrac12-\dlp)u+\slp\phi}_\Gamma
  \\
  &\qquad + \dual{1}{(\tfrac12-\dlp)u+\slp\phi}_\Gamma
  \dual{1}{(\tfrac12-\dlp)v+\slp\psi}_\Gamma, \\
  \dual{\symRHS}{(u,\phi)} &:= \dual{f}v_\Gamma + \dual{\phi_0 + \hyp
  u_0}v_\Gamma   + \dual\psi{(\tfrac12-\dlp)u_0}_\Gamma \\
  &\qquad + \dual{1}{(\tfrac12-\dlp)u_0}_\Gamma
  \dual{1}{(\tfrac12-\dlp)v+\slp\psi}_\Gamma,
\end{align*}
for all $(u,\phi),(v,\psi)\in\HH$.
We stress that Lemma~\ref{lem:jn:stab} also holds for the symmetric coupling
with $(\jn,F)$ replaced by $(\sym,\symRHS)$. The following
result can be found in~\cite[Section~5]{affkmp}.

\begin{lemma}
Lemma~\ref{lem:jn:stab} holds accordingly for the symmetric coupling,
where~\eqref{eq:contraction} is replaced by $c_\material > 0$.\qed
\end{lemma}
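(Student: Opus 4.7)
The plan is to verify, for the symmetric coupling operator $\sym$, each of the ingredients of Lemma~\ref{lem:jn:stab}: equivalence of the Galerkin formulation with the operator formulation, linearity/continuity, ellipticity (under the weaker assumption $c_\material>0$), and $\symRHS\in\HH^*$. The structural arguments carry over almost verbatim from the Johnson-N\'ed\'elec case; the real work is in the ellipticity bound, and this is where the symmetric coupling gains over the Johnson-N\'ed\'elec coupling. I would therefore organize the proof around three blocks: (i) a preparatory algebraic identity that eliminates the cross terms, (ii) the ellipticity estimate, and (iii) the routine consequences (continuity, equivalence of formulations, C\'ea estimate).

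For step (i), I would test the bilinear form with $(v,\psi)=(u,\phi)$. Using $\dual{(\dlp'-\tfrac12)\phi}{u}_\Gamma = \dual{\phi}{(\dlp-\tfrac12)u}_\Gamma = -\dual{\phi}{(\tfrac12-\dlp)u}_\Gamma$, the two off-diagonal contributions cancel and one obtains
\begin{align*}
  \dual{\sym(u,\phi)}{(u,\phi)}
  = \dual{\material\nabla u}{\nabla u}_\Omega
    + \dual{\hyp u}{u}_\Gamma
    + \dual{\phi}{\slp\phi}_\Gamma
    + \dual{1}{(\tfrac12-\dlp)u+\slp\phi}_\Gamma^{2}.
\end{align*}
This is the crucial feature of the symmetric coupling: no indefinite term survives on the diagonal, in contrast with the Johnson-N\'ed\'elec case where a contraction argument involving $c_\dlp$ was needed. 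Hence no constraint linking $c_\material$ and $c_\dlp$ is necessary here; $c_\material>0$ will suffice.

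For step (ii), I would combine four lower bounds, using $c_\material$ from~\eqref{eq:const:material}, the semi-ellipticity of $\hyp$ from~\eqref{eq:hyp:semielliptic}, the ellipticity of $\slp$ from~\eqref{eq:slp:ellipticity}, and the nonnegativity of the rank-one stabilization. Splitting $u=u_0+\overline u$ with $\overline u := |\Gamma|^{-1}\int_\Gamma u$, the first three terms control $c_\material\norm{\nabla u}{L^2(\Omega)}^2 + c_\hyp\norm{u_0}{H^{1/2}(\Gamma)}^2 + c_\slp\norm{\phi}{H^{-1/2}(\Gamma)}^2$. To recover the missing $|\overline u|^2$ and thereby the full $H^1(\Omega)$-norm via a Friedrichs/Poincar\'e-type inequality $\norm{u}{H^1(\Omega)}^2\lesssim \norm{\nabla u}{L^2(\Omega)}^2+|\overline u|^2+\norm{u_0}{H^{1/2}(\Gamma)}^2$, I would use the stabilization term: since $\dlp' 1 = -\tfrac12$, the identity $\dual{1}{(\tfrac12-\dlp)u}_\Gamma = \dual{(\tfrac12-\dlp')1}{u}_\Gamma = \int_\Gamma u = |\Gamma|\overline u$ gives
\begin{align*}
  |\Gamma|^{2}\overline u^{2}
  \le 2\,\dual{1}{(\tfrac12-\dlp)u+\slp\phi}_\Gamma^{2}
    + 2\,\dual{1}{\slp\phi}_\Gamma^{2},
\end{align*}
and the last summand is absorbed by a constant times $\norm{\phi}{H^{-1/2}(\Gamma)}^{2}$ via continuity of $\slp$. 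Combining everything yields $\dual{\sym(u,\phi)}{(u,\phi)}\ge c_\sym\norm{(u,\phi)}{\HH}^{2}$ with $c_\sym>0$ depending only on $c_\material$, $C_\material$, and $\Omega$.

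For step (iii), continuity $\norm{\sym(u,\phi)}{\HH^*}\lesssim\norm{(u,\phi)}{\HH}$ follows from mapping properties of $\material$, $\hyp$, $\slp$, $\dlp$, $\dlp'$, the trace inequality, and the continuity of the stabilization functional; likewise $\symRHS\in\HH^*$. The equivalence of the discrete Galerkin formulation with the operator formulation is verified just as in Lemma~\ref{lem:jn:stab}: choosing the constant test function $(0,1)\in\HH^\ell$ (permissible since $1\in\YY^\ell$) forces the stabilization factor $\dual{1}{(\tfrac12-\dlp)u_\ell+\slp\phi_\ell}_\Gamma$ to vanish, so the stabilized and unstabilized formulations coincide on $\HH^\ell$. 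The C\'ea estimate then follows from Lax--Milgram applied to $\sym$. The main obstacle I anticipate is bookkeeping around the stabilization: making sure that the term $\dual{1}{\slp\phi}_\Gamma$ produced by the binomial split really is absorbed by $\norm{\phi}{H^{-1/2}(\Gamma)}^{2}$ with a constant independent of $(u,\phi)$, and not inadvertently by the $\slp$-ellipticity that is already ``used up'' elsewhere. A Young's inequality with a small parameter $\eps$ in front of the $\norm{\phi}{H^{-1/2}(\Gamma)}^{2}$ bound takes care of this.
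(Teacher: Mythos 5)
The paper itself offers no proof here: it states the lemma and cites \cite[Section~5]{affkmp}, so there is nothing internal to compare against; I will evaluate your argument on its own terms. Your overall architecture is right and captures the key structural fact that distinguishes the symmetric coupling from Johnson--N\'ed\'elec: testing with $(v,\psi)=(u,\phi)$, the duality $\dual{(\dlp'-\tfrac12)\phi}{u}_\Gamma = -\dual{\phi}{(\tfrac12-\dlp)u}_\Gamma$ cancels the cross terms exactly, so no contraction estimate involving $c_\dlp$ is needed and $c_\material>0$ suffices. The continuity, equivalence of formulations (via $1\in\YY^\ell$), and the C\'ea estimate are indeed routine.

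However, step~(ii) contains a genuine gap. You invoke the identity $\dlp'1=-\tfrac12$ to conclude $\dual{1}{(\tfrac12-\dlp)u}_\Gamma = \dual{(\tfrac12-\dlp')1}{u}_\Gamma = \int_\Gamma u$. This is false for a general polygonal $\Gamma$. The eigenfunction of $\dlp'$ for the eigenvalue $-\tfrac12$ is the equilibrium density $\psi_{\rm eq}$ (the density for which $\slp\psi_{\rm eq}$ is constant), not the constant function $1$; these coincide only in special cases such as a circle. What is true is the weaker statement that $\dlp'1$ has mean $-\tfrac12$, equivalently $\dual{1}{(\tfrac12-\dlp)1}_\Gamma=|\Gamma|$, since $(\tfrac12-\dlp)1=1$. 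That weaker fact is exactly what is needed, but it must be used through the same Rellich/Peetre--Tartar compactness argument the paper invokes for Lemma~\ref{lemma:A}: the map $u\mapsto\bigl(\norm{\nabla u}{L^2(\Omega)}^2+\dual{\hyp u}{u}_\Gamma+\dual{1}{(\tfrac12-\dlp)u}_\Gamma^2\bigr)^{1/2}$ is a continuous seminorm on $H^1(\Omega)$ whose kernel is trivial (constants $c$ are killed by $\nabla$ and $\hyp$, but give $c\,|\Gamma|\ne0$ in the last term), hence it is an equivalent norm on $H^1(\Omega)$. With that substitution, your Young's-inequality split $\dual{1}{(\tfrac12-\dlp)u+\slp\phi}_\Gamma^2 \ge (1-\eps)\dual{1}{(\tfrac12-\dlp)u}_\Gamma^2 - (\eps^{-1}-1)\dual{1}{\slp\phi}_\Gamma^2$, absorbing the $\phi$-part into the $\slp$-ellipticity, closes the estimate correctly. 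So the fix is local: replace the explicit pointwise formula $\dlp'1=-\tfrac12$ by the nonvanishing-on-constants plus compactness argument, in the spirit of the proof of Lemma~\ref{lemma:A}.
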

Let $\AA_\hyp$ denote the Galerkin matrix of the hypersingular integral operator
with respect to the nodal basis of $\XX^L$,
i.e. $(\AA_\hyp)_{jk} = \dual{\hyp \eta_{z_k}^L}{\eta_{z_j}^L}_\Gamma$ for all
$j,k=1,\dots,N_L$.
The Galerkin matrix $\AA_\sym$ of the operator $\sym$ reads in matrix block form
\begin{align*}
  \AA_\sym = \begin{pmatrix}
    \AA_\material + \AA_\hyp & \Kmat^T - \tfrac12\Mmat^T \\
    \tfrac12\Mmat - \Kmat & \AA_\slp
  \end{pmatrix} + \stabmat\stabmat^T.
\end{align*}
We use the block-diagonal preconditioner
\begin{align*}
  \PPrec_\sym := \begin{pmatrix}
    \PPrec_{\widetilde\fem} & \bignull \\
    \bignull & \PPrec_\slp
  \end{pmatrix},
\end{align*}
which is similar to the one for the
Johnson-N\'ed\'elec coupling. 
Here, $\widetilde\fem : H^1(\Omega)\to (H^1(\Omega))^*$ is defined as
$\dual{\widetilde\fem u}v := \dual{\fem u}v + \dual{\hyp u}v_\Gamma$ and
$\PPrec_{\widetilde\fem}$ is defined as $\PPrec_\fem$ with the diagonals of
$\AA_\fem$ replaced by the diagonals of the Galerkin matrix of
$\widetilde\fem$.
We seek for a solution of the preconditioned system
\begin{align}\label{eq:sym:precondsystem}
  \PPrec_\sym^{-1} \AA_\sym \UU = \PPrec_\sym^{-1} \widetilde\FF,
\end{align}
where $\widetilde\FF$ denotes the discretization of the right-hand side
$\widetilde F$.
The following theorem is proved along the lines of Section~\ref{sec:proof} with the obvious
modifications.

\begin{theorem}
Theorem~\ref{thm:main} holds accordingly for the symmetric coupling.\qed
\end{theorem}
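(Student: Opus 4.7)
The plan is to replay the proof of Theorem~\ref{thm:main} from Section~\ref{sec:proof} with systematic substitutions $\jn\to\sym$, $\pform\to\widetilde\pform$, $\AA_\jn\to\AA_\sym$, $\PPrec_\jn\to\PPrec_\sym$, $\fem\to\widetilde\fem$, identifying the two new ingredients that must be verified. Concretely, I would first introduce an analogue of Lemma~\ref{lemma:B}: define the preconditioning form
\begin{align*}
  \dual{\widetilde\pform(u,\phi)}{(v,\psi)} := \dual{\widetilde\fem u}{v} + \dual{\psi}{\slp\phi}_\Gamma
  \quad\text{for }(u,\phi),(v,\psi)\in\HH,
\end{align*}
and check that $\widetilde\pform$ is linear, symmetric, continuous and $\HH$-elliptic. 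Symmetry and continuity are immediate from the definition of $\widetilde\fem$ and the symmetry~\eqref{eq:bio:symmetry} and mapping properties~\eqref{def:bio} of $\slp$. Ellipticity on $H^1(\Omega)$ of $\widetilde\fem = \fem + \hyp|_\Gamma$ is inherited from $\fem$ (since $\hyp$ is semi-elliptic and hence $\dual{\hyp u}u_\Gamma\geq 0$), while ellipticity of $\slp$ on $H^{-1/2}(\Gamma)$ is~\eqref{eq:slp:ellipticity}.

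Next, I would establish the spectral equivalence
\begin{align*}
  d_{\widetilde\fem}\dual{\PPrec_{\widetilde\fem}\xx}\xx_2 \leq
  \dual{\AA_{\widetilde\fem}\xx}\xx_2 \leq
  D_{\widetilde\fem}\dual{\PPrec_{\widetilde\fem}\xx}\xx_2
  \quad\text{for all } \xx\in\R^{N_L},
\end{align*}
analogous to Theorem~\ref{thm:precond:fem}. This is the main obstacle, since it concerns the composite operator $\widetilde\fem = \fem + \hyp|_\Gamma$ mixing the $H^1(\Omega)$ and $H^{1/2}(\Gamma)$ bilinear forms. The key observation is that, by the trace theorem and the ellipticity of both $\fem$ and $\slp$,
\begin{align*}
 \dual{\widetilde\fem u}u \simeq \norm{u}{H^1(\Omega)}^2\quad\text{for all }u\in H^1(\Omega),
\end{align*}
with constants depending only on $c_\material,C_\material,\Omega$. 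Hence the local multilevel additive Schwarz operator associated to $\widetilde\fem$ on $\XX^L$ is bounded above and below by constants times the identity, using the stable subspace decompositions of $\XX^L$ from \cite{xch10} (for the $H^1(\Omega)$ part) and from \cite{ffps} (for the $H^{1/2}(\Gamma)$ contribution, via the trace of hat functions). The identity $\dual{\PPrec_{\widetilde\fem}^{-1}\AA_{\widetilde\fem}\xx}\xx_{\AA_{\widetilde\fem}}=\dual{\mathcal{P}_{\mathrm{AS},\widetilde\fem}v}v_{\widetilde\fem}$ from the proof of Theorem~\ref{thm:precond:fem} then transfers the operator-level equivalence into the desired matrix-level equivalence. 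Combined with the unchanged Theorem~\ref{thm:precond:slp} for $\PPrec_\slp$, this yields
\begin{align*}
  d_{\widetilde\pform}\dual{\PPrec_\sym\VV}\VV_2 \leq \dual{\AA_{\widetilde\pform}\VV}\VV_2
  \leq D_{\widetilde\pform}\dual{\PPrec_\sym\VV}\VV_2
  \quad\text{for all } \VV\in\R^{N_L+M_L},
\end{align*}
with $d_{\widetilde\pform}=\min\{d_{\widetilde\fem},d_\slp\}$, $D_{\widetilde\pform}=\max\{D_{\widetilde\fem},D_\slp\}$.

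Finally, I would invoke the extension of Lemma~\ref{lem:jn:stab} to the symmetric coupling (explicitly stated in the preceding lemma of Section~\ref{sec:extensions}) to obtain ellipticity $c_\sym$ and continuity $C_\sym$ of $\sym$ on $\HH$. Lemma~\ref{lemma:discretejn} holds verbatim with $(\jn_L,\pform_L)$ replaced by $(\sym_L,\widetilde\pform_L)$ because its proof uses only linearity and invertibility. Then the proof of Theorem~\ref{thm:main} can be reproduced verbatim: the Eisenstat--Elman--Schultz bound~\eqref{eq:gmres:bound} gives the reduction factor in terms of $\alpha,\beta$, and the estimates
\begin{align*}
  \dual{\PPrec_\sym^{-1}\AA_\sym\UU}\UU_{\PPrec_\sym}\geq c_\sym C_{\widetilde\pform}^{-1} d_{\widetilde\pform}\norm{\UU}{\PPrec_\sym}^2,
  \qquad
  \norm{\PPrec_\sym^{-1}\AA_\sym\UU}{\PPrec_\sym}\leq D_{\widetilde\pform} c_{\widetilde\pform}^{-1} C_\sym\norm{\UU}{\PPrec_\sym}
\end{align*}
follow by the same chain of inequalities as in Section~\ref{sec:proof}. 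This provides $\cond_{\PPrec_\sym}(\PPrec_\sym^{-1}\AA_\sym)\leq C$ and the GMRES contraction factor $q_{\mathrm{GMRES}}<1$ with dependencies as stated in Theorem~\ref{thm:main}. The only genuinely new analytical work is the optimality of $\PPrec_{\widetilde\fem}$ for the perturbed FEM operator $\widetilde\fem$; everything else is a line-by-line transcription.
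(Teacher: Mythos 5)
Your proposal follows precisely the route the paper intends (the paper's ``proof'' is simply the remark that Section~\ref{sec:proof} carries over with the obvious substitutions $\jn\to\sym$, $\fem\to\widetilde\fem$, $\pform\to\widetilde\pform$), and you correctly isolate the only genuinely new ingredient, namely the optimality of $\PPrec_{\widetilde\fem}$. One small simplification: since $\dual{\widetilde\fem u}{u}\simeq\dual{\fem u}{u}\simeq\norm{u}{H^1(\Omega)}^2$ uniformly and hence the diagonal entries of $\AA_{\widetilde\fem}$ and $\AA_\fem$ are uniformly equivalent, the spectral estimate for $\PPrec_{\widetilde\fem}$ follows already from the $H^1(\Omega)$-stable decomposition of~\cite{xch10} alone -- invoking the $H^{1/2}(\Gamma)$-decomposition of~\cite{ffps} for the trace contribution of $\hyp$ is unnecessary.
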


\subsection{One-equation Bielak-MacCamy coupling}
The model problem~\eqref{eq:model} can equivalently be rewritten by means of the one-equation Bielak-MacCamy
coupling~\cite{bmc} which can be seen as the ``transposed'' Johnson-N\'ed\'elec
coupling: Find $(u,\phi)\in\HH$ such that
\begin{align}\label{eq:bmc:varform}
\begin{split}
  \dual{\material\nabla u}{\nabla v}_\Omega + \dual{(\tfrac12-\dlp')\phi}v_\Gamma
  &= \dual{f}v_\Omega + \dual{\phi_0}v_\Gamma, \\
  \dual{\psi}{\slp\phi-u}_\Gamma &= -\dual\psi{u_0}_\Gamma,
\end{split}
\end{align}
for all $(v,\psi)\in\HH$.
Analogously to the Johnson-N\'ed\'elec coupling~\eqref{eq:jn:staboperator}--\eqref{eq:jn:disrete}, we
define the operator $\bmc : \HH\to\HH^*$ and the linear functional
$\bmcRHS\in\HH^*$ for an equivalent operator
formulation
\begin{align*}
  \sym(u,\phi) = \symRHS
\end{align*}
of the Bielak-MacCamy
coupling~\eqref{eq:bmc:varform} by
\begin{align*}
  \dual{\bmc (u,\phi)}{(v,\psi)} &:= 
  \dual{\material\nabla u}{\nabla v}_\Omega + \dual{(\tfrac12-\dlp')\phi}v_\Gamma 
  + \dual{\psi}{\slp\phi-u}_\Gamma \\
  &\qquad + \dual{1}{\slp\phi-u}_\Gamma \dual{1}{\slp\psi-v}_\Gamma , \\
  \dual{\bmcRHS}{(v,\psi)} &:= \dual{f}v_\Omega + \dual{\phi_0}v_\Gamma 
  -\dual\psi{u_0}_\Gamma - \dual{1}{u_0}_\Gamma \dual{1}{\slp\psi-v}_\Gamma,
\end{align*}
for all $(u,\phi),(v,\psi)\in\HH$. The following result is found
in~\cite[Section~3]{affkmp}.

\begin{lemma}
Provided~\eqref{eq:contraction}, Lemma~\ref{lem:jn:stab} holds accordingly for the Bielak-MacCamy coupling. \qed
\end{lemma}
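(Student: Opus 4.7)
The plan is to verify each assertion of Lemma~\ref{lem:jn:stab} for $\bmc$ by adapting, essentially verbatim, the argument used for $\jn$ (which in turn follows \cite[Section~3]{affkmp}). Linearity of $\bmc$ is immediate. Continuity follows from the mapping properties of $\slp$ and $\dlp$ (hence $\dlp'$ by duality) stated in~\eqref{def:bio}, together with Cauchy--Schwarz applied term-by-term; analogous bounds yield $\bmcRHS\in\HH^*$.

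For the equivalence of the Galerkin formulation~\eqref{eq:bmc:varform} with the operator equation $\bmc(u_\ell,\phi_\ell)=\bmcRHS$, the crucial hypothesis is $1\in\YY^\ell$. Testing the second line of~\eqref{eq:bmc:varform} at the discrete level with $\psi_\ell=1$ gives
\begin{align*}
  \dual{1}{\slp\phi_\ell-u_\ell}_\Gamma=-\dual{1}{u_0}_\Gamma.
\end{align*}
Inserting this identity into the quadratic stabilization term $\dual{1}{\slp\phi_\ell-u_\ell}_\Gamma\dual{1}{\slp\psi_\ell-v_\ell}_\Gamma$ appearing in $\dual{\bmc(u_\ell,\phi_\ell)}{(v_\ell,\psi_\ell)}$ converts it into exactly the stabilization appearing in $\dual{\bmcRHS}{(v_\ell,\psi_\ell)}$, so the two formulations coincide on $\HH^\ell$.

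The main work is ellipticity. Setting $(v,\psi)=(u,\phi)$ and using $\dual{\dlp'\phi}{u}_\Gamma=\dual{\phi}{\dlp u}_\Gamma$, a direct computation gives
\begin{align*}
  \dual{\bmc(u,\phi)}{(u,\phi)}=\dual{\material\nabla u}{\nabla u}_\Omega-\dual{\phi}{(\tfrac12+\dlp)u}_\Gamma+\dual{\phi}{\slp\phi}_\Gamma+\dual{1}{\slp\phi-u}_\Gamma^2.
\end{align*}
The first three terms are precisely the non-stabilized diagonal contribution one encounters for $\jn$, so I can invoke the contraction-based argument of \cite{affkmp,sayas09}: combining the contraction property of $\tfrac12-\dlp$ (with constant $c_\dlp<1$) with Young's inequality and the assumption $c_\material\geq c_\dlp/4$ from~\eqref{eq:contraction} yields a lower bound $\gtrsim\norm{\nabla u}{L^2(\Omega)}^2+\norm{\phi}{H^{-1/2}(\Gamma)}^2$. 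To upgrade $\norm{\nabla u}{L^2(\Omega)}$ to the full $H^1(\Omega)$-norm, I split $u=\overline u+u_\perp$ with $\overline u:=|\Gamma|^{-1}\dual{u}{1}_\Gamma$; the zero-mean part $u_\perp$ is controlled by Poincar\'e, while the constant $\overline u$ must be absorbed by the stabilizer $\dual{1}{\slp\phi-u}_\Gamma^2=(\dual{1}{\slp\phi}_\Gamma-|\Gamma|\overline u)^2$, using $\dual{\phi}{\slp\phi}_\Gamma$ to dominate the $\slp\phi$-contribution and $\dual{1}{\slp1}_\Gamma>0$ to extract a positive multiple of $\overline u^{\,2}$. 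Once the ellipticity constant $c_\bmc>0$ is in hand, uniqueness of the Galerkin solution and the C\'ea estimate follow from the Lax--Milgram lemma.

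The step I expect to require the most care is the very last one: the BM stabilizer $\dual{1}{\slp\phi-u}_\Gamma$ couples $u$ and $\phi$ differently from the JN stabilizer $\dual{1}{(\tfrac12-\dlp)u+\slp\phi}_\Gamma$, so the coercivity bookkeeping (balancing the cross terms that arise when expanding the square and combining them with the $\slp$-energy via Young's inequality) has to be redone explicitly rather than copied from Lemma~\ref{lem:jn:stab}. All other steps are either identical to the JN case or follow from standard mapping properties.
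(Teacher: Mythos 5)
Your proposal is correct and follows the same implicit-stabilization route that the paper itself delegates to \cite[Section~3]{affkmp}, and your observation that the non-stabilized quadratic form $\dual{\material\nabla u}{\nabla u}_\Omega-\dual{\phi}{(\tfrac12+\dlp)u}_\Gamma+\dual{\phi}{\slp\phi}_\Gamma$ is literally the same for $\bmc$ and $\jn$ is exactly what makes the coercivity argument transfer, with only the stabilizer bookkeeping changing. One small point on the equivalence step: as written you only establish that a solution of the non-stabilized Galerkin formulation~\eqref{eq:bmc:varform} satisfies $\bmc(u_\ell,\phi_\ell)=\bmcRHS$ on $\HH^\ell$; for the converse you should additionally test the stabilized equation with $(v_\ell,\psi_\ell)=(0,1)$ and use $1+\dual{1}{\slp 1}_\Gamma>0$ to recover $\dual{1}{\slp\phi_\ell-u_\ell}_\Gamma=-\dual{1}{u_0}_\Gamma$, after which the two stabilization terms cancel and~\eqref{eq:bmc:varform} follows.
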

The Galerkin matrix $\AA_\bmc$ of the operator $\bmc$ reads in matrix block form
\begin{align*}
  \AA_\bmc = \begin{pmatrix}
    \AA_\material & \tfrac12 \Mmat^T - \Kmat^T \\
    -\Mmat & \AA_\slp
  \end{pmatrix}
  +\stabmatBmc\stabmatBmc^T,
\end{align*}
where the (column) vector $\stabmatBmc$ is defined componentwise by
$(\stabmatBmc)_j := -\dual{1}{\eta_{z_j}^L}_\Gamma$ for $j=1,\dots,N_L$ and
$(\stabmatBmc)_{j+N_L} := \dual{1}{\slp\psi_{T_j}}_\Gamma$ for $j=1,\dots,M_L$. We use
the same block-diagonal preconditioner~\eqref{eq:precond_mat} as for the
Johnson-N\'ed\'elec coupling. The following theorem is proved along the lines of Section~\ref{sec:proof} with the obvious
modifications.

\begin{theorem}
Theorem~\ref{thm:main} holds accordingly for the Bielak-MacCamy coupling. \qed
\end{theorem}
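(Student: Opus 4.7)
The plan is to mimic the proof of Theorem~\ref{thm:main} verbatim, with $\jn$ replaced by $\bmc$, $\AA_\jn$ by $\AA_\bmc$, and $\FF$ by $\widetilde\FF$. The preconditioning form $\pform$ from Lemma~\ref{lemma:B} depends only on $\fem$ and $\slp$, not on the particular coupling formulation, and the block-diagonal preconditioner $\PPrec_\jn$ in~\eqref{eq:precond_mat} is identical to the one used for Bielak-MacCamy. Hence the spectral estimate~\eqref{eq:P_ineq} remains valid without any change, and so does Remark~\ref{rem:cond}.

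First I would observe that the proof of Theorem~\ref{thm:main} never uses the specific algebraic structure of $\AA_\jn$: it exploits only (i) that $\bmc : \HH\to\HH^*$ is linear, continuous, and elliptic with constants $c_\bmc,C_\bmc>0$ depending on $c_\material,C_\material,\Omega$, which is exactly the content of the Lemma preceding the Bielak-MacCamy theorem, and (ii) that $\AA_\bmc$ is the Galerkin matrix of $\bmc$ with respect to the basis of $\HH^L$. In particular, the auxiliary Lemma~\ref{lemma:discretejn} is stated abstractly for any discrete operator; applying it with $\bmc_L := \bmc|_{\HH^L}$ and $\tmp_L := \bmc_L^*\pform_L^{-1}\bmc_L$ yields $\tmpmat = \AA_\bmc^T\AA_\pform^{-1}\AA_\bmc$.

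With these ingredients in hand, the derivation of the bounds on $\alpha,\beta$ in~\eqref{eq:gmres:bounda}--\eqref{eq:gmres:boundb} transfers line by line. For the lower bound on $\alpha$, ellipticity of $\bmc$ and continuity of $\pform$ combined with~\eqref{eq:P_ineq} give
\begin{align*}
  \dual{\PPrec_\jn^{-1}\AA_\bmc\UU}{\UU}_{\PPrec_\jn}
  = \dual{\bmc\uu_L}{\uu_L}
  \geq c_\bmc\norm{\uu_L}\HH^2
  \geq c_\bmc C_\pform^{-1} d_\pform \norm\UU{\PPrec_\jn}^2,
\end{align*}
so $\widetilde\alpha := c_\bmc C_\pform^{-1}\min\{d_\fem,d_\slp\}$. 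For the upper bound on $\beta$, the same duality/Galerkin argument as in Section~\ref{sec:proof}, now based on $\ww_L := \pform_L^{-1}\bmc_L\uu_L$ which satisfies $c_\pform\norm{\ww_L}\HH \leq C_\bmc\norm{\uu_L}\HH$, produces $\widetilde\beta := \max\{D_\fem,D_\slp\}c_\pform^{-1} C_\bmc$. The geometric reduction factor $q_{\rm GMRES} = (1-\widetilde\alpha^2/\widetilde\beta^2)^{1/2}<1$ then follows from~\eqref{eq:gmres:bound}, and the uniform bound $\cond_{\PPrec_\jn}(\PPrec_\jn^{-1}\AA_\bmc)\leq \widetilde\beta/\widetilde\alpha$ is immediate.

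The only nontrivial point to check is that the new rank-one stabilization $\stabmatBmc\stabmatBmc^T$ does not spoil the analysis. This is not really an obstacle: the stabilization is already absorbed in the definition of $\bmc$ and only ensures ellipticity on the continuous level via the cited Lemma, so it plays no independent role once ellipticity and continuity of $\bmc$ are granted. All other steps (the equivalence~\eqref{eq:P_ineq}--\eqref{eq:P_ineq3} and the two spectral estimates of Section~\ref{sec:spectral:estimates}) are properties of $\PPrec_\jn$ versus $\AA_\pform$ alone, independent of which coupling formulation is discretized, and hence carry over unchanged. This completes the plan.
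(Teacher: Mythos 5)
Your proposal is correct and matches the paper's intent: the paper simply asserts that the proof of Theorem~\ref{thm:main} carries over ``with the obvious modifications,'' and you have spelled out those modifications accurately — replacing $\jn,c_\jn,C_\jn,\AA_\jn$ by $\bmc,c_\bmc,C_\bmc,\AA_\bmc$, reusing the same preconditioning form $\pform$ and block preconditioner $\PPrec_\jn$, invoking the Bielak-MacCamy analogue of Lemma~\ref{lem:jn:stab} for ellipticity and continuity of $\bmc$, and applying Lemma~\ref{lemma:discretejn} abstractly to get $\tmpmat=\AA_\bmc^T\AA_\pform^{-1}\AA_\bmc$. Your observation that the stabilization is already absorbed into $\bmc$ and plays no independent role once ellipticity and continuity are established is exactly the right way to see why the argument is insensitive to the different stabilization vector $\stabmatBmc$.
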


\subsection{Further remarks}
The analysis in Section~\ref{sec:proof} depends only on the spectral
estimates~\eqref{eq:specest:fem} and~\eqref{eq:specest:slp}. Therefore, the
multilevel additive Schwarz preconditioners $\PPrec_\fem$ and $\PPrec_\slp$ can be replaced by
any preconditioners $\widetilde\PPrec_\fem$ and $\widetilde\PPrec_\slp$ such that
\begin{align}\label{eq:rem:specest}
  \dual{\widetilde\PPrec_\fem \xx}{\xx}_2 \simeq \dual{\AA_\fem\xx}{\xx}_2
  \quad\text{and}\quad 
  \dual{\widetilde\PPrec_\slp\pphi}{\pphi}_2 \simeq \dual{\AA_\slp\pphi}{\pphi}_2
\end{align}
holds for all $\xx\in\R^{N_L},\pphi\in\R^{M_L}$.
The reduction constant $q_{\rm GMRES}$ from Theorem~\ref{thm:main} then depends
on the equivalence constants in~\eqref{eq:rem:specest}.
Preferably, the preconditioners $\widetilde\PPrec_\fem$ and
$\widetilde\PPrec_\slp$ should be chosen such that these constants are
independent of mesh-related quantities as is the case for the local multilevel
additive Schwarz preconditioners considered here.

The techniques presented in this work may also apply for the (quasi-)symmetric
Bielak-MacCamy coupling~\cite{bmc}. A stability analysis of this coupling method
can be found in~\cite{ghs09}.

It is also possible to apply our analysis to other
model problems, e.g., transmission problems for linear elasticity.
We stress that our approach requires a (possibly non-symmetric) positive definite Galerkin
matrix, associated to the coupling method.
For Lam\'e-type problems, this can be ensured by stabilization, where a result
analogously to Lemma~\ref{lem:jn:stab} remains valid~\cite{FBlame}.


\section{Numerical examples}\label{sec:examples}

\begin{figure}[t]
  \begin{center}
  \psfrag{x}[c][c]{\tiny $x$}
  \psfrag{y}[c][c]{\tiny $y$}
  \includegraphics[width=0.8\textwidth]{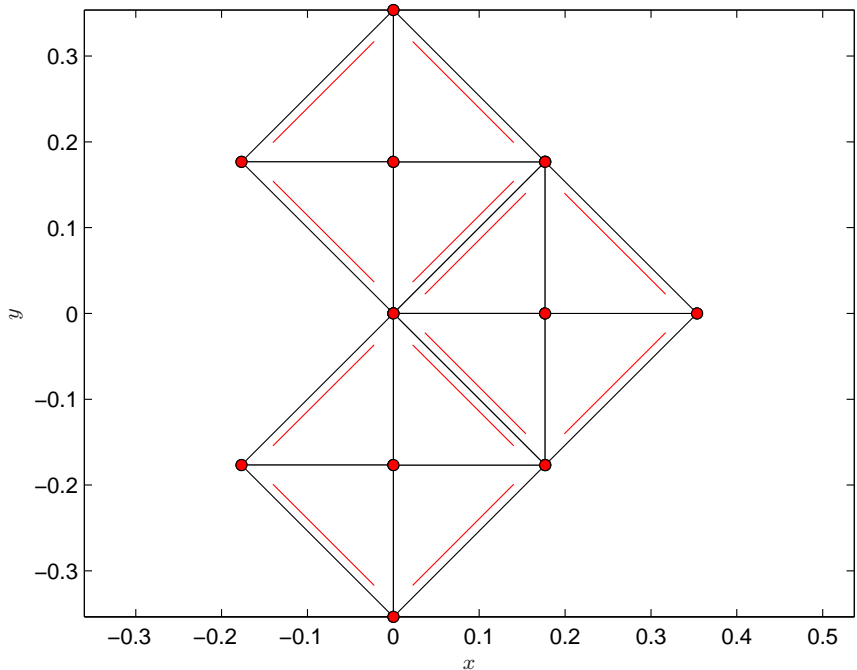}
  \caption{L-shaped domain $\Omega$ with $\diam(\Omega)<1$ and initial volume triangulation
  $\TT_0^\Omega$. The initial triangulation $\TT_0^\Gamma$ of the boundary is given by
  the restriction $\TT_0^\Omega|_\Gamma$ of the volume triangulation on the
  boundary. The initial triangulations consist of $\#\TT_0^\Omega = 12$ resp.
  $\# \TT_0^\Gamma = 8$ elements. Red lines indicate the reference edges for the
  newest vertex bisection of the initial volume triangulation.}
  \label{fig:lshape2}
  \end{center}
\end{figure}

\subsection{Weakly-singular integral equation with adaptive
mesh-refinement}\label{sec:examples:slp}
In our first experiment, we underline the result of
Theorem~\ref{thm:precond:slp}, which states the uniform boundedness of the
condition number of the preconditioned simple-layer operator.
We consider the homogeneous Laplace equation
\begin{subequations}\label{eq:laplace:dirichlet}
\begin{align}
  -\Delta u &=0 \quad\text{in }\Omega, \\
  u &= g \quad\text{on }\Gamma :=\partial\Omega
\end{align}
\end{subequations}
with given Dirichlet data $g\in H^{1/2}(\Gamma)$ and 
the L-shaped domain $\Omega$ from Figure~\ref{fig:lshape2}.
We note that $\diam(\Omega) = 2/3<1$.
Problem~\eqref{eq:laplace:dirichlet} is equivalent to the weakly-singular
integral equation
\begin{align}\label{eq:weaksing}
  \slp \phi = (1/2+\dlp)g,
\end{align}
where $\phi = \partial_{\normal} u$ and $(\dlp-1/2) : H^{1/2}(\Gamma) \to
H^{1/2}(\Gamma)$ denotes the trace of the double layer potential
\begin{align*}
  \widetilde\dlp g (x) = \int_\Gamma \partial_{\normal(y)} G(x-y) g(y)
  \,d\Gamma_y.
\end{align*}
Equation~\eqref{eq:weaksing} reads in the variational formulation: Find $\phi\in
\YY:=H^{-1/2}(\Gamma)$ such that
\begin{align}\label{eq:weaksing:varform}
  \dual{\psi}\phi_\slp = \dual{\psi}{(1/2+\dlp)g}_\Gamma \quad\text{for all
  }\psi \in\YY.
\end{align}
We prescribe the exact solution 
\begin{align*}
  u(x,y) = r^{2/3} \cos(2\varphi/3)
\end{align*}
with $(x,y)=(r\cos\varphi,r\sin\varphi)$ given in 2D polar coordinates. Then, $g := u|_\Gamma$ and
$\phi = \partial_{\normal} u$. The exact solution of~\eqref{eq:weaksing:varform}
exhibits a generic singularity at the reentrant corner $(0,0)\in\R^2$.
We use the local ZZ-type error indicators developed in~\cite{zzbem} to steer
the mesh-adaptation and to resolve this singularity effectively.

The discrete version of~\eqref{eq:weaksing:varform} reads in matrix notation:
Find $\pphi \in \R^{M_L}$ such that
\begin{align}
  \AA_\slp \pphi = \GG \in\R^{M_L},
\end{align}
where $(\GG)_j = \dual{\psi_{T_j}}{(1/2+\dlp)g}_\Gamma$ for all $j=1,\dots,M_L$.
Due to~\cite{amt99}, the $\ell_2$-condition number of the Galerkin matrix
$\AA_\slp\in\R^{M_L\times M_L}$ is
bounded by
\begin{align}\label{eq:slp:cond2}
  \cond_2(\AA_\slp) \lesssim M_L
  \Big(\frac{\hmax}{\hmin}\Big)^2 (1+|\log(M_L\hmax)|) =:\alpha_L
\end{align}
and, thus, can become bad on adaptively refined meshes. Therefore, we consider
the preconditioned system
\begin{align}
  \PPrec^{-1} \AA_\slp \pphi = \PPrec^{-1}\GG
\end{align}
where the preconditioner matrix $\PPrec\in\R^{M_L \times M_L}$ is either the
local multilevel preconditioner $\PPrec_\slp$ proposed in
Section~\ref{sec:main:LMLD} or the simple diagonal scaling $\PPrec_{\rm diag} :=
\diag(\AA_\slp)$ proposed in~\cite{amt99}.
According to Theorem~\ref{thm:precond:slp}, the eigenvalues of $\PPrec_\slp^{-1}
\AA_\slp$ are uniformly bounded. Since $\PPrec_\slp^{-1}\AA_\slp$ is symmetric
with respect to $\dual\cdot\cdot_{\AA_\slp}$ and
$\dual\cdot\cdot_{\PPrec_\slp}$, the condition number can be estimated by
\begin{align}\label{eq:slp:condLMLD}
  \cond_{\AA_\slp}(\PPrec_\slp^{-1} \AA_\slp)=
  \cond_{\PPrec_\slp}(\PPrec_\slp^{-1} \AA_\slp) =
  \frac{\evmax(\PPrec_\slp^{-1}\AA_\slp)}{\evmin(\PPrec_\slp^{-1}\AA_\slp)}
  \lesssim 1
\end{align}
with $\evmin(\cdot)$ and $\evmax(\cdot)$ being the minimal resp.\ maximal eigenvalue.
On the other hand, it has been proved in~\cite{amt99} that
\begin{align}\label{eq:slp:condDiag}
  \cond_{\AA_\slp}(\PPrec_{\rm diag}^{-1}\AA_\slp) \lesssim M_L (1+|\log(M_L \hmin)|)
  \frac{1+|\log\hmin|}{1+|\log\hmax|} =: \beta_L.
\end{align}

\begin{figure}[t]
\begin{center}
  \psfrag{nE}[c][c]{\tiny number of elements $M_L$}
  \psfrag{condition numbers}{\tiny condition numbers}
  \psfrag{alphaL}{\tiny $\OO(\beta_L)$}
  \psfrag{betaL}{\tiny $\OO(\alpha_L)$}
  \psfrag{V}{\tiny $\cond_2(\AA_\slp)$}
  \psfrag{PV}{\tiny $\cond_{\AA_\slp}(\PPrec_\slp^{-1}\AA_\slp)$}
  \psfrag{PdiagV}{\tiny $\cond_{\AA_\slp}(\PPrec_{\rm diag}^{-1}\AA_\slp)$}
  \includegraphics[width=0.7\textwidth]{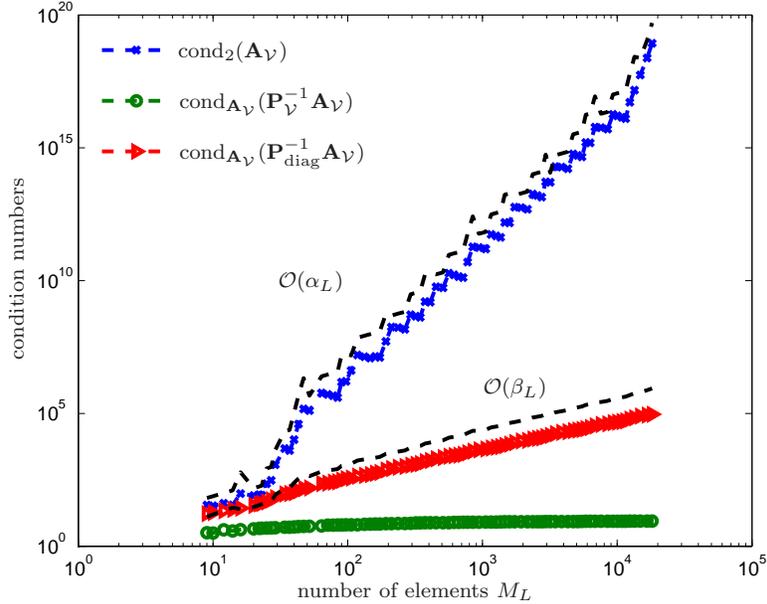}
  \caption{Condition numbers for the unpreconditioned matrix $\AA_\slp$ and the
    preconditioned matrices $\PPrec_\slp^{-1}\AA_\slp$ and $\PPrec_{\rm
    diag}^{-1} \AA_\slp$ for the weakly-singular integral equation from Section~\ref{sec:examples:slp}.
    Here, $\alpha_L$ resp. $\beta_L$ are the upper bounds in the
    estimates~\eqref{eq:slp:cond2} resp.~\eqref{eq:slp:condDiag}.}
  \label{fig:SLP:cond}
\end{center}
\end{figure}
\noindent
In Figure~\ref{fig:SLP:cond}, we compare the condition numbers of the Galerkin matrix
$\AA_\slp$ and the preconditioned matrices $\PPrec_\slp^{-1}\AA_\slp$ and
$\PPrec_{\rm diag}^{-1}\AA_\slp$.
We observe that the condition numbers behave as predicted by the
estimates~\eqref{eq:slp:cond2}
and~\eqref{eq:slp:condLMLD}--\eqref{eq:slp:condDiag}.

\subsection{Transmission problem with adaptive mesh-refinement}\label{sec:examples:jn}
Let $\Omega$ denote the L-shaped domain from Figure~\ref{fig:lshape2}.
We consider the (stabilized) Johnson-N\'ed\'elec FEM-BEM
coupling~\eqref{eq:varform:stabilized} for
the transmission problem~\eqref{eq:model} with $\material(x)$ being
the $2\times 2$ identity matrix, i.e. $-\div(\material\nabla u) = -\Delta u$ in
$\Omega$.
We prescribe the exact solutions
\begin{align}
  u(x,y) &= r^{2/3} \cos(2\varphi/3) \quad\text{for }(x,y)\in\Omega, \\
  u^{\rm ext}(x,y) &= \frac1{10} \frac{x+y-0.125}{(x-0.125)^2 + y^2} \quad\text{for
  }(x,y)\in\Omegaext,
\end{align}
where $(r,\varphi)$ denote the 2D polar coordinates.
The data $f\in L^2(\Omega)$, $u_0\in H^{1/2}(\Gamma)$, and $\phi_0\in
H^{-1/2}(\Gamma)$ are computed thereof.
We stress that $u$, hence also $u_0 = (u-u^{\rm ext})|_\Gamma$, exhibits a
generic singularity at the reentrant corner $(0,0)\in\R^2$.
To steer the mesh-adaptivity, we use the residual-based error estimator from~\cite{affkmp,afkp}
which dates back to~\cite{cs1995} for the symmetric coupling.

\begin{figure}[t]
\begin{center}
  \psfrag{nE}[c][c]{\tiny number of elements $\# \TT_L^\Omega$}
  \psfrag{condition numbers}{\tiny condition numbers}
  \psfrag{condJN}{\tiny estimate for $\cond_2(\AA_\jn)$}
  \psfrag{condBprec}{\tiny $\cond_{\PPrec_\jn}(\PPrec_\jn^{-1}\AA_\pform)$}
  \psfrag{condBprecHB}{\tiny $\cond_{\PPrec_\jn^{\rm HB}}( (\PPrec_\jn^{\rm HB})^{-1}\AA_\pform)$}
  \includegraphics[width=0.7\textwidth]{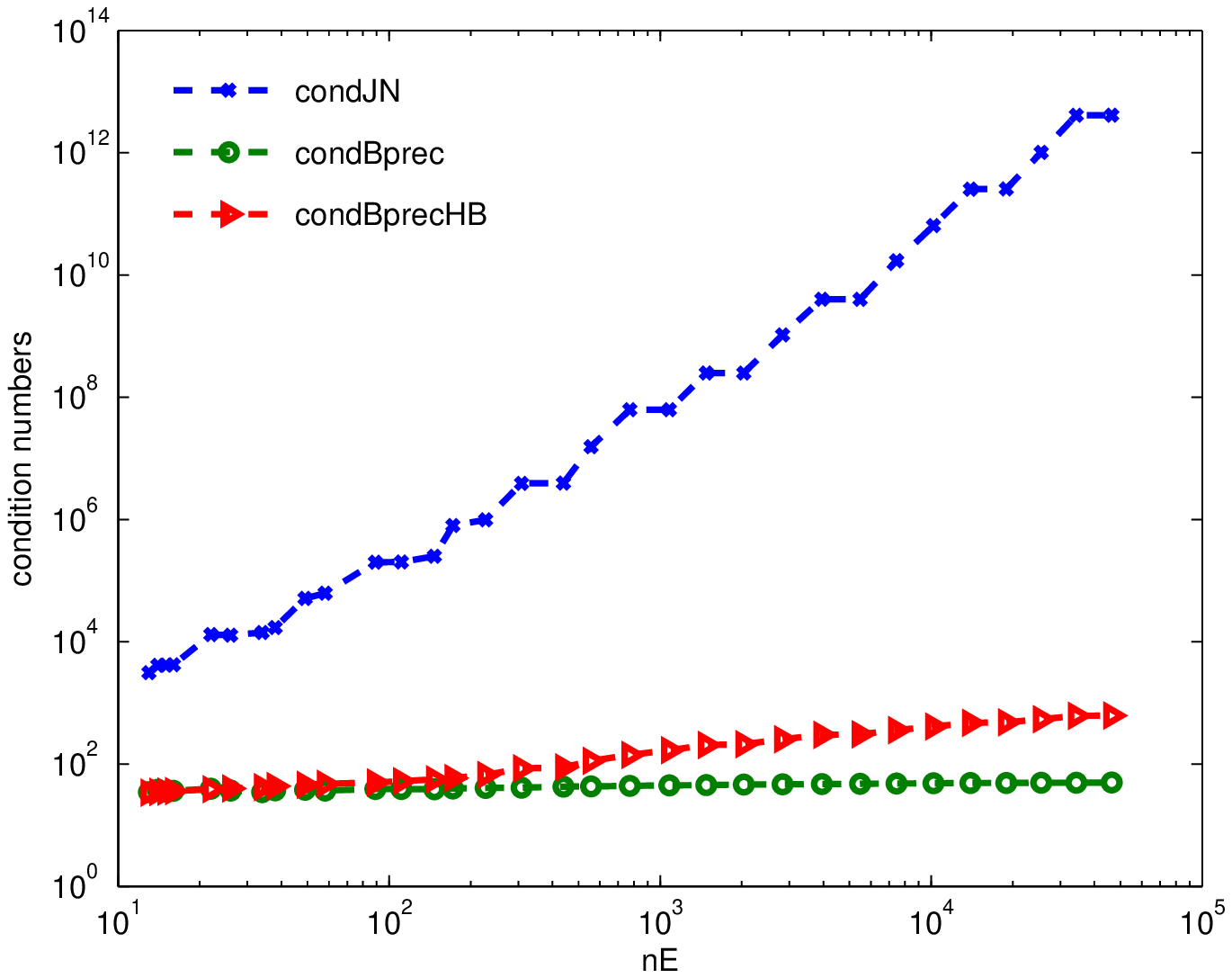}
  \caption{Estimate~\eqref{eq:cond2:estimate} for the condition number of $\AA_\jn$ and the condition
  number of $\PPrec_\jn^{-1}\AA_\pform$ resp. $(\PPrec_\jn^{\rm
  HB})^{-1}\AA_\pform$ for the stabilized Johnson-N\'ed\'elec coupling of Section~\ref{sec:examples:jn}.}
  \label{fig:JN:cond}
\end{center}
\end{figure}

\begin{figure}[t]
\begin{center}
  \psfrag{nE}[c][c]{\tiny number of elements $\# \TT_L^\Omega$}
  \psfrag{number of iterations}{\tiny number of iterations}
  \psfrag{stab}{\tiny \emph{stabilized}, $\PPrec = \PPrec_\jn$}
  \psfrag{stabHB}{\tiny \emph{stabilized}, $\PPrec = \PPrec_\jn^{\rm HB}$}
  \psfrag{nostab}{\tiny \emph{non-stabilized}, $\PPrec = \PPrec_\jn$}
  \psfrag{nostabHB}{\tiny \emph{non-stabilized}, $\PPrec = \PPrec_\jn^{\rm HB}$}
  \includegraphics[width=0.7\textwidth]{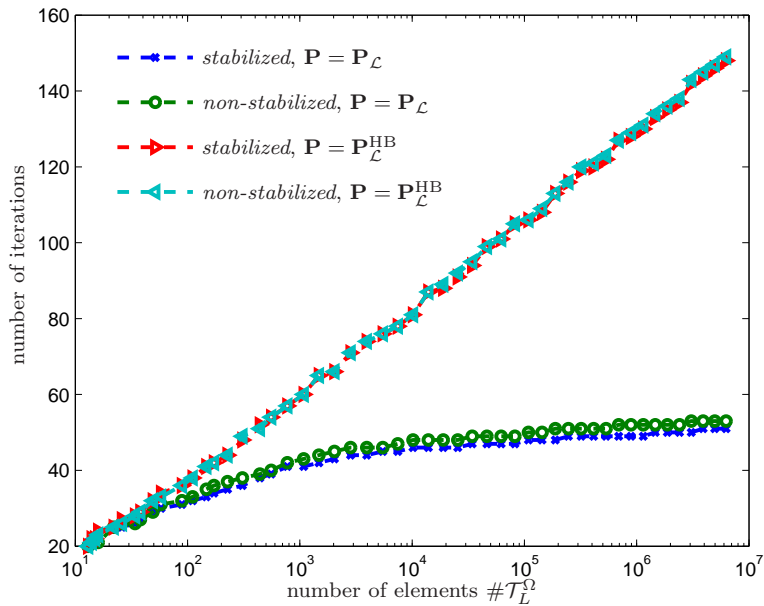}
  \caption{Number of iterations for solving the
  non-stabilized~\eqref{eq:jn:nonstab} resp. stabilized Johnson-N\'ed\'elec
  coupling~\eqref{eq:jn:stab} using the preconditioned GMRES Algorithm~\ref{alg:gmres} with tolerance $\tau
  = 10^{-6}$, inner product $\PPrec = \PPrec_\jn$ resp. $\PPrec =
  \PPrec_\jn^{\rm HB}$, and initial guess $\UU_0=\bignull$.}
  \label{fig:JN:iter}
\end{center}
\end{figure}

\begin{figure}[htb]
\begin{center}
  \psfrag{nE}[c][c]{\tiny number of elements $\# \TT_L^\Omega$}
  \psfrag{number of iterations}{\tiny number of iterations}
  \psfrag{jnAdap}{\tiny $\PPrec_\jn^{-1}\AA_\jn$, adap.}
  \psfrag{jnUnif}{\tiny $\PPrec_\jn^{-1}\AA_\jn$, unif.}
  \psfrag{symAdap}{\tiny $\PPrec_\sym^{-1}\AA_\sym$, adap.}
  \psfrag{symUnif}{\tiny $\PPrec_\sym^{-1}\AA_\sym$, unif.}
  \includegraphics[width=0.7\textwidth]{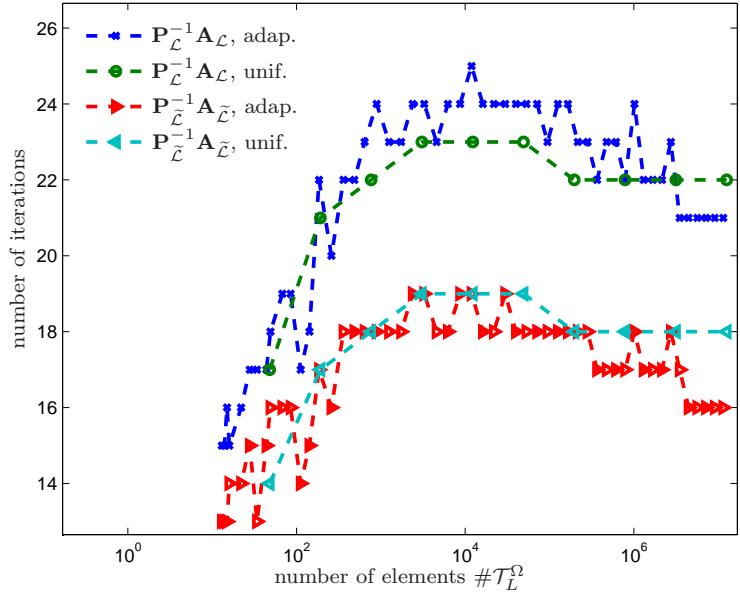}
  \caption{Number of iterations for solving the stabilized
  Johnson-N\'ed\'elec coupling~\eqref{eq:jn:stab} resp. symmetric 
  coupling~\eqref{eq:sym:precondsystem} using the preconditioned GMRES Algorithm~\ref{alg:gmres} with tolerance $\tau
  = 10^{-3}$ and inner product $\PPrec = \PPrec_\jn$ resp. $\PPrec =
  \PPrec_\sym$ on adaptively and uniformly refined meshes. For the
  initial guess $\UU_0$ we prolongate the solution of~\eqref{eq:jn:stab}
  resp.~\eqref{eq:sym:precondsystem} at level $L-1$ to level $L$.}
  \label{fig:JNSym:iter}
\end{center}
\end{figure}

For the Johnson-N\'ed\'elec coupling, we compare the proposed optimal preconditioner $\PPrec_\jn$ with the
block-diagonal preconditioner
\begin{align}
  \PPrec_\jn^{\rm HB} = \begin{pmatrix}
    \PPrec_\fem^{\rm HB} & \bignull \\
    \bignull & \PPrec_\slp^{\rm HB}
  \end{pmatrix},
\end{align}
which was proposed and analyzed in~\cite{ms98} for the symmetric coupling. 
Here, $\PPrec_\fem^{\rm HB}$ denotes the hierarchical basis preconditioner
corresponding to the operator $\fem$, and $\PPrec_\slp^{\rm HB}$ denotes the
hierarchical basis preconditioner corresponding to the simple-layer operator
$\slp$.
Basically, the difference between local multilevel preconditioners and
hierarchical preconditioners is that the set $\widetilde\NN_\ell^\Omega$
resp. $\widetilde\NN_\ell^\Gamma$ is replaced by the set of new nodes
$\NN_{\ell+1}^\Omega\backslash \NN_\ell^\Omega$ resp. $\NN_{\ell+1}^\Gamma
\backslash \NN_\ell^\Gamma$. This means that scaling is only done on the newly
created nodes, but not on their neigbours. 
It is well-known that hierarchical basis preconditioners lead to
sub-optimal condition number, which depend on the number of levels $L$.
A more detailed discussion can be found in~\cite{yserentant} for FEM problems
and in~\cite{tsm97} for BEM model problems.
See also~\cite[Section~6]{xch10} resp.~\cite[Section~3]{ffps} for a numerical comparison between hierarchical
basis and local multilevel additive Schwarz preconditioners for some FEM resp. BEM problems on adaptively
refined meshes.
Sub-optimality of $\PPrec_\fem^{\rm HB}$ and $\PPrec_\slp^{\rm
HB}$ lead to sub-optimality of the FEM-BEM preconditioner $\PPrec_\jn^{\rm HB}$,
i.e. a dependency on the level $L$. Thus, also the number of iterations depend
on $L$, which is also seen in our numerical examples.

In Figure~\ref{fig:JN:cond}, we plot
$\cond_{\PPrec_\jn}(\PPrec_\jn^{-1}\AA_\pform)$ which is an upper bound for
$\cond_{\PPrec_\jn}(\PPrec_\jn^{-1}\AA_\jn)$, see
  Remark~\ref{rem:cond},
and compare it with the condition
number $\cond_{\PPrec_\jn^{\rm HB}}( (\PPrec_\jn^{\rm HB})^{-1} \AA_\pform)$. We
observe that the condition number of the preconditioned matrix $(\PPrec_\jn^{\rm
HB})^{-1}\AA_\jn$ depends on the level $L$, whereas the condition number of
$\PPrec_\jn^{-1}\AA_\jn$ is independent of the level $L$. This underlines the
optimality of the preconditioner $\PPrec_\jn$ as stated in
Theorem~\ref{thm:main}.
Additionally, we plot the estimate
\begin{align}\label{eq:cond2:estimate}
  \cond_2(\AA_\jn) \leq \sqrt{\cond_1(\AA_\jn)\cond_1(\AA_\jn^T)}.
\end{align}
for the $\ell_2$-condition number of $\AA_\jn$ in Figure~\ref{fig:JN:cond}.
An estimate for the condition number $\cond_1(\AA_\jn) = \norm{\AA_\jn}1 \norm{\AA_\jn^{-1}}1$
is computed with the {\sc Matlab} function {\tt condest}.
Estimate~\eqref{eq:cond2:estimate} is obtained from
\begin{align*}
  \norm{\AA}2 \leq \sqrt{\norm{\AA}1\norm{\AA}\infty} \quad\text{for all
  matrices }\AA.
\end{align*}

In Figure~\ref{fig:JN:iter}, we furthermore consider the \emph{non-stabilized} system
\begin{align}\label{eq:jn:nonstab}
  \PPrec^{-1} \widehat\AA_\jn \UU = \PPrec^{-1}\widehat\FF,
\end{align}
where $\widehat\AA_\jn$ corresponds to the Galerkin matrix of the
\emph{non-stabilized} problem~\eqref{eq:jn:disrete} and $\widehat\FF$
corresponds to the right-hand side of~\eqref{eq:jn:disrete}.
The matrix $\PPrec$ is either the preconditioner matrix $\PPrec_\jn$ or
$\PPrec_\jn^{\rm HB}$.
Note that by Lemma~\ref{lem:jn:stab}, the solution $\UU$
of~\eqref{eq:jn:nonstab} is unique and also a solution of
\begin{align}\label{eq:jn:stab}
  \PPrec^{-1} \AA_\jn \UU = \PPrec^{-1}\FF.
\end{align}
In Figure~\ref{fig:JN:iter}, we plot the number of iterations used in the
preconditioned GMRES Algorithm~\ref{alg:gmres} with tolerance $\tau=10^{-6}$,
inner product $\PPrec = \PPrec_\jn$ resp. $\PPrec = \PPrec_\jn^{\rm HB}$, and initial guess $\UU_0=\bignull$ for solving the
problem~\eqref{eq:jn:stab} and problem~\eqref{eq:jn:nonstab}.
We observe that, both for $\PPrec_\jn$ and $\PPrec_\jn^{\rm HB}$, the number of iterations for solving the \emph{non-stabilized}
problem~\eqref{eq:jn:nonstab} is slightly higher than the number of iterations
used for solving problem~\eqref{eq:jn:stab} with the \emph{stabilized} system
matrix $\AA_\jn$.


\begin{figure}[htb]
\begin{center}
  \psfrag{x}[c][c]{\tiny $x$}
  \psfrag{y}[c][c]{\tiny $y$}
  \includegraphics[width=0.49\textwidth]{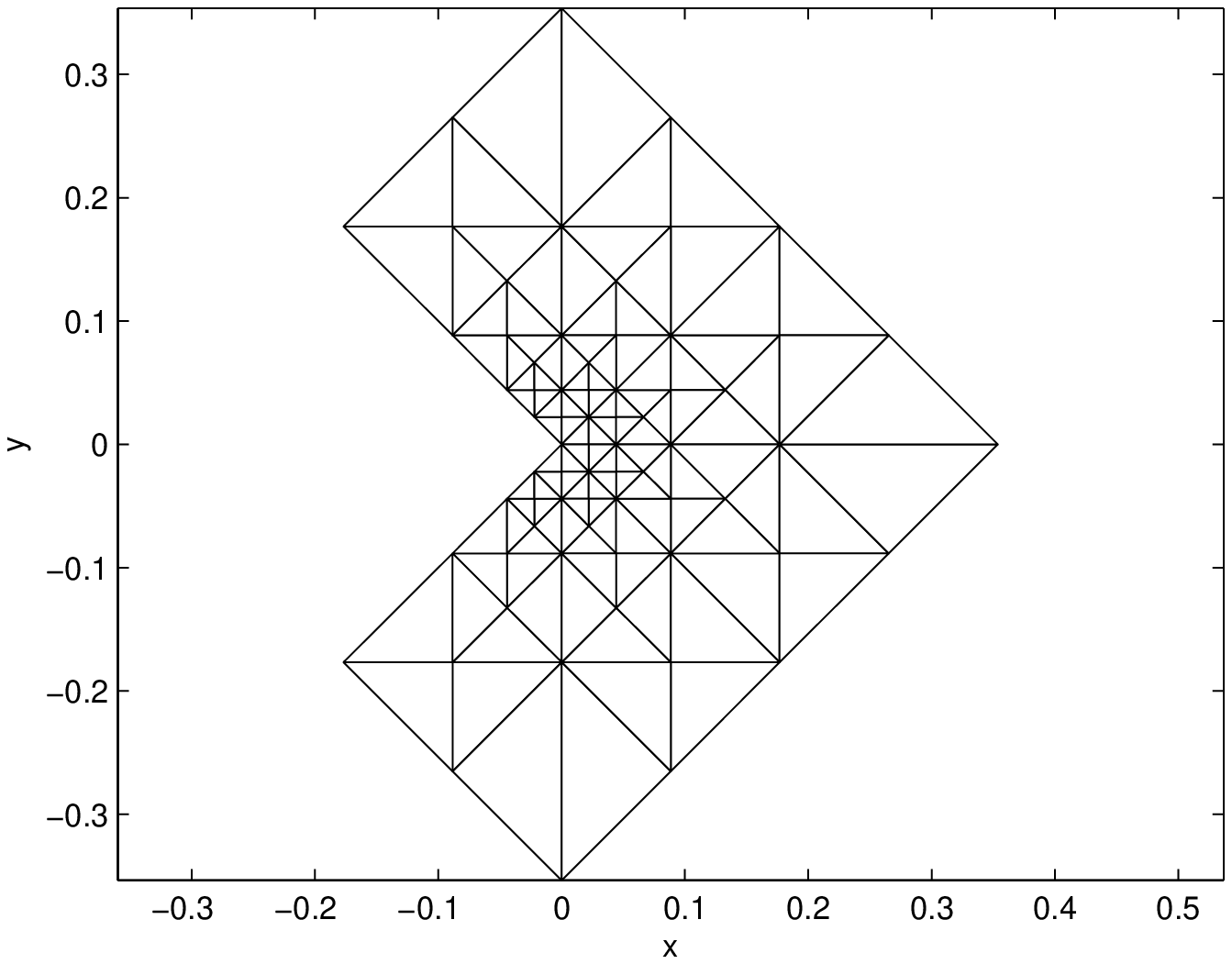}
  \includegraphics[width=0.49\textwidth]{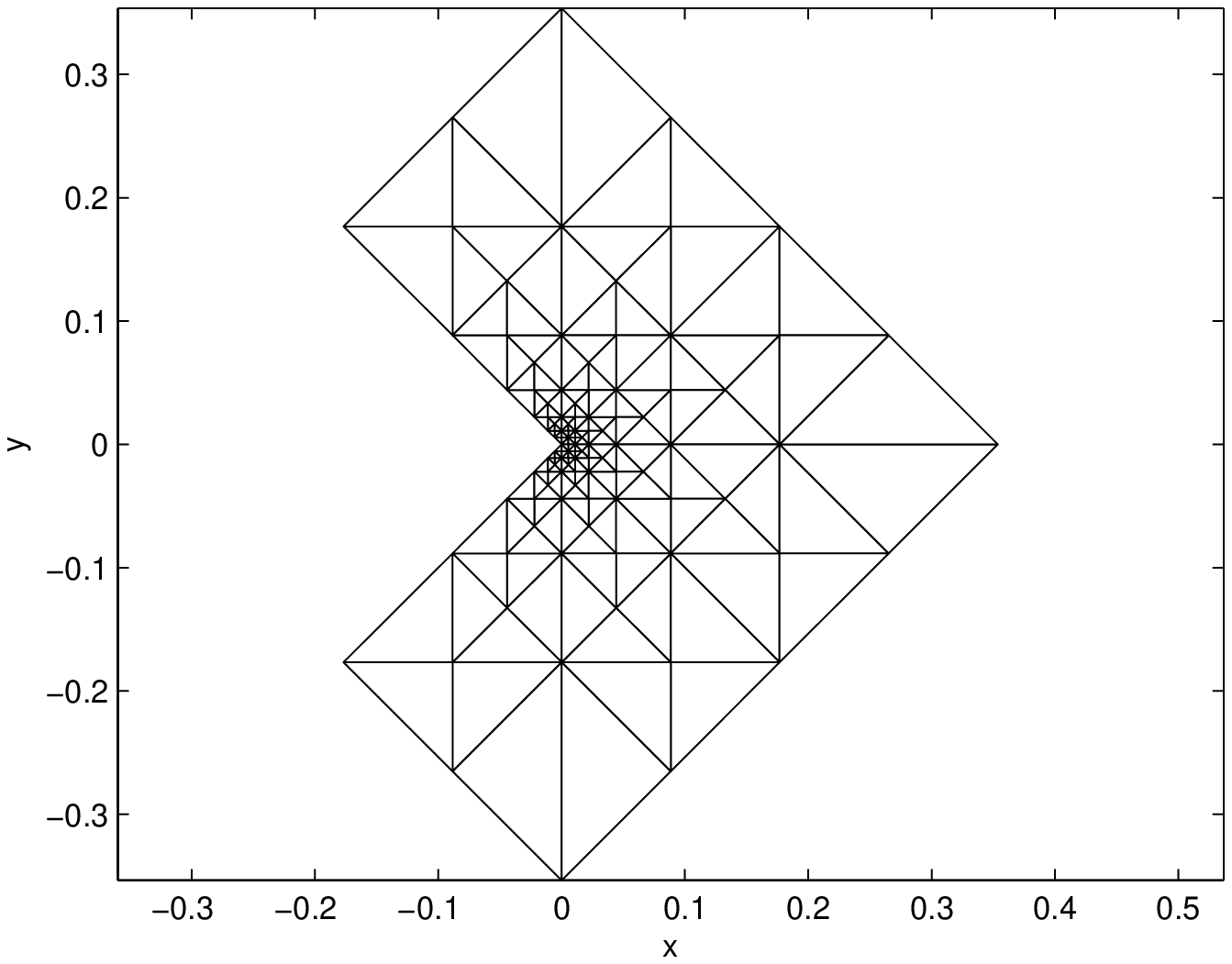}
  \caption{Artificially refined meshes at level $L=3$ (left) with $\#
    \TT_3^\Omega = 114$ resp. $M_3 = \# \TT_3^\Gamma = 20$ and level
    $L=5$ (right) with $\# \TT_5^\Omega = 186$ resp. $M_5 = \# \TT_5^\Gamma =24$
    for the stabilized Johnson-N\'ed\'elec coupling of Section~\ref{sec:examples:artRef}.}
  \label{fig:mesh:artificial}
\end{center}
\end{figure}

\begin{table}[!htb]
\begin{center}
  \begin{small}
    \vspace*{1ex}
  \begin{tabular}{|c|c|c|c|c|c|c|c|}
\hline
\hline $L$ & $\#\TT_L^\Omega$ & $M_L$ & $\cond_2(\AA_\jn)$ &
{\tiny $\cond_{\PPrec_\jn}(\PPrec_\jn^{-1}\AA_\pform)$} &
{\tiny $\cond_{\PPrec_\jn^{\rm HB}}((\PPrec_\jn^{\rm HB})^{-1}\AA_\pform)$} & $\hmax$ & $\hmin$ \\
\hline\hline
1 & 42 & 16 & 4.57e+03 & 36.65 & 43.72 & 0.18 & 8.84e-02\\\hline
2 & 78 & 18 & 1.60e+04 & 41.03 & 46.69 & 0.18 & 4.42e-02\\\hline
3 & 114 & 20 & 6.11e+04 & 44.47 & 49.49 & 0.18 & 2.21e-02\\\hline
4 & 150 & 22 & 2.40e+05 & 47.26 & 54.62 & 0.18 & 1.10e-02\\\hline
5 & 186 & 24 & 9.54e+05 & 49.39 & 67.47 & 0.18 & 5.52e-03\\\hline
6 & 222 & 26 & 3.80e+06 & 51.03 & 86.40 & 0.18 & 2.76e-03\\\hline
7 & 258 & 28 & 1.52e+07 & 52.30 & 108.98 & 0.18 & 1.38e-03\\\hline
8 & 294 & 30 & 6.07e+07 & 53.30 & 134.68 & 0.18 & 6.91e-04\\\hline
9 & 330 & 32 & 2.43e+08 & 54.10 & 163.37 & 0.18 & 3.45e-04\\\hline
10 & 366 & 34 & 9.70e+08 & 54.75 & 195.00 & 0.18 & 1.73e-04\\\hline
11 & 402 & 36 & 3.88e+09 & 55.28 & 229.54 & 0.18 & 8.63e-05\\\hline
12 & 438 & 38 & 1.55e+10 & 55.72 & 267.00 & 0.18 & 4.32e-05\\\hline
13 & 474 & 40 & 6.21e+10 & 56.09 & 307.35 & 0.18 & 2.16e-05\\\hline
14 & 510 & 42 & 2.48e+11 & 56.40 & 350.59 & 0.18 & 1.08e-05\\\hline
15 & 546 & 44 & 9.93e+11 & 56.67 & 396.72 & 0.18 & 5.39e-06\\\hline
16 & 582 & 46 & 3.97e+12 & 56.89 & 445.73 & 0.18 & 2.70e-06\\\hline
17 & 618 & 48 & 1.59e+13 & 57.09 & 497.62 & 0.18 & 1.35e-06\\\hline
18 & 654 & 50 & 6.35e+13 & 57.26 & 552.40 & 0.18 & 6.74e-07\\\hline
19 & 690 & 52 & 2.54e+14 & 57.41 & 610.05 & 0.18 & 3.37e-07\\\hline
20 & 726 & 54 & 1.02e+15 & 57.54 & 670.57 & 0.18 & 1.69e-07\\\hline
21 & 762 & 56 & 4.07e+15 & 57.66 & 733.97 & 0.18 & 8.43e-08\\\hline
22 & 798 & 58 & 1.63e+16 & 57.76 & 800.24 & 0.18 & 4.21e-08\\\hline
23 & 834 & 60 & 6.51e+16 & 57.85 & 869.38 & 0.18 & 2.11e-08\\\hline
\end{tabular}

    \vspace*{1ex}
  \end{small}
  \caption{Condition number estimates for the example with artificial
    mesh-refinement from Section~\ref{sec:examples:artRef}.
    Note that $\cond_{\PPrec_\jn}(\PPrec_\jn^{-1}\AA_\pform)$ is an
    upper bound for the condition number
    $\cond_{\PPrec_\jn}(\PPrec_\jn^{-1}\AA_\jn)$, see Remark~\ref{rem:cond}, and    
    $\cond_{\PPrec_\jn^{\rm HB}}((\PPrec_\jn^{\rm HB})^{-1}\AA_\pform)$
    is an upper bound for $\cond_{\PPrec_\jn^{\rm HB}}((\PPrec_\jn^{\rm
    HB})^{-1}\AA_\jn)$.  }
  \label{tab:artRef:cond}
\end{center}
\end{table}

\subsection{Symmetric coupling vs. Johnson-N\'ed\'elec coupling}
In a further experiment, we compare the (stabilized)
Johnson-N\'ed\'elec coupling~\eqref{eq:jn:stab} and the (stabilized) symmetric
coupling~\eqref{eq:sym:precondsystem}
with respect to the number of iterations used in the preconditioned GMRES
Algorithm~\ref{alg:gmres} with $\tau = 10^{-3}$ and $\PPrec
= \PPrec_\jn$ resp. $\PPrec = \PPrec_\sym$. For the initial guess
$\UU_0$ we prolongate the solution of~\eqref{eq:jn:stab} resp.~\eqref{eq:sym:precondsystem} at level
$L-1$ to level $L$.
Mesh-adaptivity is steered with the solution of~\eqref{eq:sym:precondsystem} and the
residual-based error estimator from~\cite{cs1995}.
In Figure~\ref{fig:JNSym:iter}, we plot the number of iterations used for
adaptive refinement. We observe that for both the uniform and adaptive case, the
symmetric coupling needs less iterations.
However, the symmetric coupling requires the computation of additional
matrix-vector multiplications with discrete BEM operators in each iteration
step.

\subsection{Transmission problem with artificial refinement}\label{sec:examples:artRef}
Let $\Omega$ denote the L-shaped domain with boundary $\Gamma = \partial\Omega$
and initial triangulations $\TT_0^\Omega$, $\TT_0^\Gamma$ from 
Figure~\ref{fig:lshape2}.
We consider an artificial mesh-refinement, where we only mark the elements
$T\in\TT_\ell^\Omega$, $\TT_\ell^\Gamma$ with $(0,0)\in T$ for refinement.
Clearly, this leads to strongly adapted meshes towards the origin
$(0,0)\in\R^2$, see Figure~\ref{fig:mesh:artificial}.
As for the example from Section~\ref{sec:examples:jn}, we compare
$\cond_{\PPrec_\jn}(\PPrec_\jn^{-1}\AA_\pform)$ and $\cond_{\PPrec_\jn^{\rm HB}}(
(\PPrec_\jn^{\rm HB})^{-1}\AA_\pform)$ as well as the
estimate~\eqref{eq:cond2:estimate} for $\cond_2(\AA_\jn)$.
The results are summarized in Table~\ref{tab:artRef:cond}. We observe optimality of the
proposed preconditioner $\PPrec_\jn$, whereas the condition numbers for
the hierarchical preconditioner depend on the number of levels $L$.

\bibliographystyle{alpha}
\bibliography{literature}

\newcommand{\etalchar}[1]{$^{#1}$}
\begin{thebibliography}{AFF{\etalchar{+}}13b}

\bibitem[AFF{\etalchar{+}}13a]{affkmp}
Markus Aurada, Michael Feischl, Thomas F{\"u}hrer, Michael Karkulik,
  Jens~Markus Melenk, and Dirk Praetorius.
\newblock Classical {FEM}-{BEM} coupling methods: nonlinearities,
  well-posedness, and adaptivity.
\newblock {\em Comput. Mech.}, 51(4):399--419, 2013.

\bibitem[AFF{\etalchar{+}}13b]{cmam}
Markus Aurada, Michael Feischl, Thomas F{\"u}hrer, Michael Karkulik, and Dirk
  Praetorius.
\newblock Efficiency and optimality of some weighted-residual error estimator
  for adaptive 2{D} boundary element methods.
\newblock {\em Comput. Methods Appl. Math.}, 13(3):305--332, 2013.

\bibitem[AFKP12]{afkp}
Markus Aurada, Michael Feischl, Michael Karkulik, and Dirk Praetorius.
\newblock A posteriori error estimates for the {J}ohnson-{N}\'ed\'elec
  {FEM}-{BEM} coupling.
\newblock {\em Eng. Anal. Bound. Elem.}, 36(2):255--266, 2012.

\bibitem[AMT99]{amt99}
Mark Ainsworth, William McLean, and Thanh Tran.
\newblock The conditioning of boundary element equations on locally refined
  meshes and preconditioning by diagonal scaling.
\newblock {\em SIAM J. Numer. Anal.}, 36(6):1901--1932, 1999.

\bibitem[BM84]{bmc}
Jacobo Bielak and Richard~C. MacCamy.
\newblock An exterior interface problem in two-dimensional elastodynamics.
\newblock {\em Quart. Appl. Math.}, 41(1):143--159, 1983/84.

\bibitem[BP88]{brampasc88}
James~H. Bramble and Joseph~E. Pasciak.
\newblock A preconditioning technique for indefinite systems resulting from
  mixed approximations of elliptic problems.
\newblock {\em Math. Comp.}, 50(181):1--17, 1988.

\bibitem[CKL98]{cckuhnlanger}
C.~Carstensen, M.~Kuhn, and U.~Langer.
\newblock Fast parallel solvers for symmetric boundary element domain
  decomposition equations.
\newblock {\em Numer. Math.}, 79(3):321--347, 1998.

\bibitem[Cos88]{costabel}
Martin Costabel.
\newblock A symmetric method for the coupling of finite elements and boundary
  elements.
\newblock In {\em The mathematics of finite elements and applications, {VI}
  ({U}xbridge, 1987)}, pages 281--288. Academic Press, London, 1988.

\bibitem[CS95]{cs1995}
Carsten Carstensen and Ernst~P. Stephan.
\newblock Adaptive coupling of boundary elements and finite elements.
\newblock {\em RAIRO Mod\'el. Math. Anal. Num\'er.}, 29(7):779--817, 1995.

\bibitem[EES83]{ees83}
Stanley~C. Eisenstat, Howard~C. Elman, and Martin~H. Schultz.
\newblock Variational iterative methods for nonsymmetric systems of linear
  equations.
\newblock {\em SIAM J. Numer. Anal.}, 20(2):345--357, 1983.

\bibitem[FFKP12]{FBlame}
Michael Feischl, Thomas F\"uhrer, Michael Karkulik, and Dirk Praetorius.
\newblock Stability of symmetric and nonsymmetric fem-bem couplings for
  nonlinear elasticity problems.
\newblock {\em ASC Report}, 52/2012, Vienna University of Technology, 2012.

\bibitem[FFKP14]{zzbem}
Michael Feischl, Thomas F\"uhrer, Michael Karkulik, and Dirk Praetorius.
\newblock {ZZ}-type a posteriori error estimators for adaptive boundary element
  methods on a curve.
\newblock {\em Eng. Anal. Bound. Elem.}, 38:49--60, 2014.

\bibitem[FFPS13]{ffps}
Michael Feischl, Thomas F\"uhrer, Dirk Praetorius, and Ernst~P. Stephan.
\newblock Efficient additive {S}chwarz preconditioning for hypersingular
  integral equations.
\newblock {\em ASC Report}, 25/2013, Vienna University of Technology, 2013.

\bibitem[FS09]{fs09}
Stefan~A. Funken and Ernst~P. Stephan.
\newblock Fast solvers with block-diagonal preconditioners for linear
  {FEM}-{BEM} coupling.
\newblock {\em Numer. Linear Algebra Appl.}, 16(5):365--395, 2009.

\bibitem[GHS12]{ghs09}
Gabriel~N. Gatica, George~C. Hsiao, and Francisco-Javier Sayas.
\newblock Relaxing the hypotheses of {B}ielak-{M}ac{C}amy's {BEM}-{FEM}
  coupling.
\newblock {\em Numer. Math.}, 120(3):465--487, 2012.

\bibitem[Han90]{han90}
Hou~De Han.
\newblock A new class of variational formulations for the coupling of finite
  and boundary element methods.
\newblock {\em J. Comput. Math.}, 8(3):223--232, 1990.

\bibitem[HMS99]{hms99}
Norbert Heuer, Matthias Maischak, and Ernst~P. Stephan.
\newblock Preconditioned minimum residual iteration for the {$h$}-{$p$} version
  of the coupled {FEM}/{BEM} with quasi-uniform meshes.
\newblock {\em Numer. Linear Algebra Appl.}, 6(6):435--456, 1999.
\newblock Iterative solution methods for the elasticity equations in mechanics
  and biomechanics, IMMB'98, Part 1 (Nijmegen).

\bibitem[HPPS03]{harbrecht03}
Helmut Harbrecht, Freddy Paiva, Cristian P{\'e}rez, and Reinhold Schneider.
\newblock Multiscale preconditioning for the coupling of {FEM}-{BEM}.
\newblock {\em Numer. Linear Algebra Appl.}, 10(3):197--222, 2003.

\bibitem[HS96]{hs96}
Manfred Hahne and Ernst~P. Stephan.
\newblock Schwarz iterations for the efficient solution of screen problems with
  boundary elements.
\newblock {\em Computing}, 56(1):61--85, 1996.

\bibitem[HS98]{heuerstep98}
Norbert Heuer and Ernst~P. Stephan.
\newblock Preconditioners for the {$p$}-version of the {G}alerkin method for a
  coupled finite element/boundary element system.
\newblock {\em Numer. Methods Partial Differential Equations}, 14(1):47--61,
  1998.

\bibitem[JN80]{johned}
Claes Johnson and Jean-Claude N{\'e}d{\'e}lec.
\newblock On the coupling of boundary integral and finite element methods.
\newblock {\em Math. Comp.}, 35(152):1063--1079, 1980.

\bibitem[KPP13]{kpp}
Michael Karkulik, David Pavlicek, and Dirk Praetorius.
\newblock On {2D} newest vertex bisection: Optimality of mesh-closure and
  {$H^1$}-stability of {$L_2$}-projection.
\newblock {\em Constr. Approx.}, 38:213--234, 2013.

\bibitem[KS02]{kuhnsteinbach}
M.~Kuhn and O.~Steinbach.
\newblock Symmetric coupling of finite and boundary elements for exterior
  magnetic field problems.
\newblock {\em Math. Methods Appl. Sci.}, 25(5):357--371, 2002.

\bibitem[Mau49]{maue}
August~W. Maue.
\newblock Zur {F}ormulierung eines allgemeinen {B}eugungs-problems durch eine
  {I}ntegralgleichung.
\newblock {\em Zeitschrift f\"ur Physik}, 126(7-9):601--618, 1949.

\bibitem[Med98]{meddahi98}
Salim Meddahi.
\newblock An optimal iterative process for the {J}ohnson-{N}edelec method of
  coupling boundary and finite elements.
\newblock {\em SIAM J. Numer. Anal.}, 35(4):1393--1415 (electronic), 1998.

\bibitem[MS98]{ms98}
Patrick Mund and Ernst~P. Stephan.
\newblock The preconditioned {GMRES} method for systems of coupled {FEM}-{BEM}
  equations.
\newblock {\em Adv. Comput. Math.}, 9(1-2):131--144, 1998.
\newblock Numerical treatment of boundary integral equations.

\bibitem[OS13]{os}
G\"unther Of and Olaf Steinbach.
\newblock Is the one-equation coupling of finite and boundary element methods
  always stable?
\newblock {\em Z. Angew. Math. Mech.}, 93:476--484, 2013.

\bibitem[Say09]{sayas09}
Francisco-Javier Sayas.
\newblock The validity of {J}ohnson-{N}\'ed\'elec's {BEM}-{FEM} coupling on
  polygonal interfaces.
\newblock {\em SIAM J. Numer. Anal.}, 47(5):3451--3463, 2009.

\bibitem[SS86]{saad}
Youcef Saad and Martin~H. Schultz.
\newblock G{MRES}: a generalized minimal residual algorithm for solving
  nonsymmetric linear systems.
\newblock {\em SIAM J. Sci. Statist. Comput.}, 7(3):856--869, 1986.

\bibitem[SS07]{sarkisszyld}
Marcus Sarkis and Daniel~B. Szyld.
\newblock Optimal left and right additive {S}chwarz preconditioning for minimal
  residual methods with {E}uclidean and energy norms.
\newblock {\em Comput. Methods Appl. Mech. Engrg.}, 196(8):1612--1621, 2007.

\bibitem[Ste11]{s2}
Olaf Steinbach.
\newblock A note on the stable one-equation coupling of finite and boundary
  elements.
\newblock {\em SIAM J. Numer. Anal.}, 49(4):1521--1531, 2011.

\bibitem[SW01]{sw}
Olaf Steinbach and Wolfgang~L. Wendland.
\newblock On {C}. {N}eumann's method for second-order elliptic systems in
  domains with non-smooth boundaries.
\newblock {\em J. Math. Anal. Appl.}, 262(2):733--748, 2001.

\bibitem[TS96]{transtep96}
Thanh Tran and Ernst~P. Stephan.
\newblock Additive {S}chwarz methods for the {$h$}-version boundary element
  method.
\newblock {\em Appl. Anal.}, 60(1-2):63--84, 1996.

\bibitem[TSM97]{tsm97}
Thanh Tran, Ernst~P. Stephan, and Patrick Mund.
\newblock Hierarchical basis preconditioners for first kind integral equations.
\newblock {\em Appl. Anal.}, 65(3-4):353--372, 1997.

\bibitem[Ver13]{verfuerth}
R{\"u}diger Verf{\"u}rth.
\newblock {\em A posteriori error estimation techniques for finite element
  methods}.
\newblock Numerical Mathematics and Scientific Computation. Oxford University
  Press, Oxford, 2013.

\bibitem[WC06]{wuchen06}
Haijun Wu and Zhiming Chen.
\newblock Uniform convergence of multigrid {V}-cycle on adaptively refined
  finite element meshes for second order elliptic problems.
\newblock {\em Sci. China Ser. A}, 49(10):1405--1429, 2006.

\bibitem[XCH10]{xch10}
Xuejun Xu, Huangxin Chen, and Ronald H.~W. Hoppe.
\newblock Optimality of local multilevel methods on adaptively refined meshes
  for elliptic boundary value problems.
\newblock {\em J. Numer. Math.}, 18(1):59--90, 2010.

\bibitem[XCN09]{xcn09}
Jinchao Xu, Long Chen, and Ricardo~H. Nochetto.
\newblock Optimal multilevel methods for {$H({\rm grad})$}, {$H({\rm curl})$},
  and {$H({\rm div})$} systems on graded and unstructured grids.
\newblock In {\em Multiscale, nonlinear and adaptive approximation}, pages
  599--659. Springer, Berlin, 2009.

\bibitem[Yse86]{yserentant}
Harry Yserentant.
\newblock On the multilevel splitting of finite element spaces.
\newblock {\em Numer. Math.}, 49(4):379--412, 1986.

\bibitem[Zha92]{zhang92}
Xuejun Zhang.
\newblock Multilevel {S}chwarz methods.
\newblock {\em Numer. Math.}, 63(4):521--539, 1992.

\end{thebibliography}

\end{document}